\documentclass[10pt,reqno]{amsart}

\usepackage{amsmath}
\usepackage{amsfonts}
\usepackage{amssymb}
\usepackage{bm}
\newcommand{\vertiii}[1]{{\left\vert\kern-0.25ex\left\vert\kern-0.25ex\left\vert #1 
    \right\vert\kern-0.25ex\right\vert\kern-0.25ex\right\vert}}
\usepackage{amsthm}
\usepackage{mathrsfs}
\usepackage{latexsym}

\usepackage{cite} 

\usepackage{esint}

\numberwithin{equation}{section}

\usepackage{graphicx,subfigure}
\usepackage{color}





\usepackage{ifthen} 

\provideboolean{shownotes} 
\setboolean{shownotes}{true} 
\newcommand{\margnote}[1]{
\ifthenelse{\boolean{shownotes}}%
{\marginpar{\raggedright\tiny\texttt{#1}}}%
{}%
}

\newcommand{\hole}[1]{
\ifthenelse{\boolean{shownotes}}%
{\begin{center} \fbox{ \rule {.25cm}{0cm}
\rule[-.1cm]{0cm}{.4cm} \parbox{.85\textwidth}{\begin{center}
\texttt{#1}\end{center}} \rule {.25cm}{0cm}}\end{center}}
{}
}

\usepackage[colorlinks=true,urlcolor=blue,
citecolor=red,linkcolor=blue,linktocpage,pdfpagelabels,
bookmarksnumbered,bookmarksopen]{hyperref}





%
%





\theoremstyle{plain}

\newtheorem{lemma}{Lemma}[section]
\newtheorem{theorem}[lemma]{Theorem}
\newtheorem{proposition}[lemma]{Proposition}
\newtheorem{corollary}[lemma]{Corollary}

\theoremstyle{definition}
\newtheorem{remark}[lemma]{Remark}
\newtheorem{definition}[lemma]{Definition}

\theoremstyle{remark}








\newcommand{\R}{\mathbb{R}}
\newcommand{\C}{\mathbb{C}}
\newcommand{\Z}{\mathbb{Z}}
\newcommand{\N}{\mathbb{N}}
\newcommand{\bbS}{\mathbb{S}}

\newcommand{\tiA}{\widetilde{A}}
\newcommand{\tiB}{\widetilde{B}}
\newcommand{\tiL}{\widetilde{L}}

\newcommand{\Or}{{\mathcal{O}}}

\newcommand{\cD}{{\mathcal{D}}}

\newcommand{\cS}{{\mathcal{S}}}

\newcommand{\cU}{{\mathcal{U}}}
\newcommand{\cE}{{\mathcal{E}}}
\newcommand{\cV}{{\mathcal{V}}}

\newcommand{\vep}{\varepsilon}

\renewcommand{\Re}{\mathrm{Re}\,} 
\renewcommand{\Im}{\mathrm{Im}\,}

\newcommand{\bu}{\overline{u}}

\newcommand{\bam}{\overline{m}}
\newcommand{\bbu}{\overline{\bm{u}}}
\newcommand{\bU}{\overline{U}}
\newcommand{\bV}{\overline{V}}
\newcommand{\bW}{\overline{W}}

\newcommand{\bL}{\overline{L}}

\newcommand{\bB}{\overline{B}}

\newcommand{\bA}{\overline{A}}

\newcommand{\bM}{\overline{M}}

\newcommand{\brho}{\overline{\rho}}
\newcommand{\bthe}{\overline{\theta}}

\newcommand{\bp}{\overline{p}}
\newcommand{\be}{\overline{e}}

\newcommand{\bet}{\overline{\eta}}

\newcommand{\hU}{\widehat{U}}
\newcommand{\hV}{\widehat{V}}
\newcommand{\hZ}{\widehat{Z}}

\newcommand{\hg}{\hat{g}}
\newcommand{\hh}{\hat{h}}

%



\newcommand{\<}{\langle}
\renewcommand{\>}{\rangle}



\begin{document}

\title[Dissipativity and decay for a radiation hydrodynamics system]{Dissipative structure and decay rate for an inviscid non-equilibrium radiation hydrodynamics system}



\author[C. Lattanzio]{Corrado Lattanzio}

\address[C. Lattanzio]{Dipartimento di Ingegneria e Scienze dell'Informazione e Matematica\\Universit\`a degli Studi dell'Aquila\\via Vetoio (snc), Coppito I-67010, L'Aquila (Italy)}

\email{corrado.lattanzio@univaq.it}

\author[R. G. Plaza]{Ram\'on G. Plaza}

\address[R. G. Plaza]{Departamento de Matem\'aticas y Mec\'anica\\Instituto de 
Investigaciones en Matem\'aticas Aplicadas y en Sistemas\\Universidad Nacional Aut\'onoma de 
M\'exico\\Circuito Escolar s/n, Ciudad Universitaria C.P. 04510 Cd. Mx. (Mexico)}

\email{plaza@aries.iimas.unam.mx}

\author[J. M. Valdovinos]{Jos\'{e} M. Valdovinos}
	
\address[J. M. Valdovinos]{Departamento de Matem\'aticas y Mec\'anica\\Instituto de 
Investigaciones en Matem\'aticas Aplicadas y en Sistemas\\Universidad Nacional Aut\'onoma de 
M\'exico\\ Circuito Escolar s/n, Ciudad Universitaria, C.P. 04510\\Cd. de M\'{e}xico (Mexico)}
\curraddr{\textsc{Institut de Math\'ematiques de Toulouse\\Universit\'e de Toulouse\\118, route de Narbonne, F-31062, Toulouse Cedex 9 (France)}}
	
\email{jvaldovi@math.univ-toulouse.fr}


\keywords{Non-equilibrium radiation hydrodynamics; decay structure; global existence}

\subjclass[2020]{76W05, 76N10, 76N06, 35B40, 35A01}

\begin{abstract} 
This paper studies the diffusion approximation, non-equilibrium model of radiation hydrodynamics derived by Buet and Despr\'es (J. Quant. Spectrosc. Radiat. Transf. 85 (2004), no. 3-4, 385--418). The latter describes a non-relativistic inviscid fluid subject to a radiative field under the non-equilibrium hypothesis, that is, when the temperature of the fluid is different from the radiation temperature. It is shown that local solutions exist for the general system in several space dimensions. It is also proved that only the one-dimensional model is genuinely coupled in the sense of Kawashima and Shizuta (Hokkaido Math. J. 14 (1985), no. 2, 249--275). A notion of entropy function for non-conservative parabolic balance laws is also introduced. It is shown that the entropy identified by Buet and Despr\'es is an entropy function for the system in the latter sense. This entropy is used to recast the one-dimensional system in terms of a new set of perturbation variables and to symmetrize it. With the aid of genuine coupling and symmetrization, linear decay rates are obtained for the one dimensional problem. These estimates, combined with the local existence result, yield the global existence  and decay in time of perturbations of constant equilibrium states in one space dimension.
\end{abstract}

\maketitle



\section{Introduction}
\label{sec:intro}

The field of radiation hydrodynamics (cf. \cite{MiW-Mi84,Pomr73,Cas04}) is concerned with situations where (thermal) radiation effects are taken into account in the description of fluid motion. While at moderate temperatures the contribution of radiation to the dynamics of the fluid is by means of energy exchanges due to radiative processes, at high temperatures the thermal radiation may become comparable or even dominate the fluid state variables, and when this is the case the radiation significantly affects the dynamics of the fluid. Radiation hydrodynamics finds applications in various astrophysical phenomena (such as supernova explosions, the description of stellar winds, or nonlinear stellar pulsations; see, for example, Kippenhahn and Weigert \cite{KipWei90}), as well as in high-temperature plasma physics (cf. Zel{$'$}dovich and Raizer \cite{ZelRai02}). 

The most general system of equations describing the coupling of radiation and hydrodynamics is quite complicated to solve, both analytically and numerically. Radiation, for instance, is described by an assembly of photons, which are massless particles travelling at the speed of light $c$, and thus a description in the framework of special relativity is needed. The radiation is described by a transport equation for photon distribution with a non-local source term. Coupling the standard hydrodynamics equations for a gas and the radiative transfer equation results into a very complicated system which can be approximated in different physically valid regimes and, therefore, it is natural to consider reduced models. One of these approximation regimes is called the \emph{diffusion approximation} (also called the Eddington approximation), which quantifies the energy flow due to radiation in a semi-quantitative sense (cf. \cite{JiaZh18,Jia17}). This approximation is valid for optically thick fluids for which the photons emitted by the gas have a high probability of reabsorption. In most applications the fluid velocities are small compared to the velocity of light, so the flow description can be made through the classical Euler hydrodynamics system and by taking an approximation of order $O(v/c)$, where $v$ is the characteristic velocity of the fluid. It is important to keep terms of order $O(v/c)$ even when $|v/c|$ is small, in order to avoid neglecting the work done by radiation pressure and not to give rise to an incorrect radiation spectrum (see, e.g., Buchler \cite{Buch83}). Neglecting terms of order $O(v^2/c^2)$ is, on the other hand, consistent with using non-relativistic hydrodynamics equations for the material fluid. Under this point of view, Lowrie \emph{et al.} \cite{LoMoHi99} derived a set of equations describing radiation hydrodynamics to which one can apply Eulerian conservative high-order Godunov-type schemes commonly used in hydrodynamics \cite{Hirs07, Lev92}. The authors applied a simplified asymptotic analysis expansion, very similar to a Chapman-Enskog or Hilbert expansion, and considered the equilibrium diffusion model, that is, when the matter temperature is taken a priori equal to the radiation temperature. In a later work and in the same spirit of Lowrie \emph{et al.}, Buet and Despr\'es \cite{BuDe04} performed the same asymptotic analysis in the diffusion approximation regime in order to derive for the first time the \emph{non-equilibrium diffusion model}, that is, when the temperature of the fluid, $\theta$, is different from the temperature of radiation, $\theta_{r}$. This is the physical model that we address in the present paper. Notably, this limiting model has been rigorously justified in recent works, both in $\R^3$ \cite{LiZha24} and in a three-dimensional torus \cite{JLZ25}.

The non-equilibrium diffusion radiation hydrodynamics model derived by Buet and Despr\'es (see system (67) in \cite{BuDe04}) reads 
\begin{equation}\label{neq-rad-hyd}
\begin{aligned}
\partial_{t}\rho + \nabla \cdot (\rho \bm{u})&=0, \\
\partial_{t}(\rho \bm{u}) + \nabla \cdot \big( \rho \bm{u} \otimes \bm{u} \big) + \nabla \big( p +\tfrac{1}{3}\eta\big) &=0, \\ 
\partial_{t}\big( \rho E  + \eta \big) + \nabla \cdot \big( (\rho E+ \eta )\bm{u} + \big(p +\tfrac{1}{3}\eta \big)\bm{u} \big)&= \nabla \cdot \big( \tfrac{1}{3\sigma_{s}}\nabla \eta \big), \\
\partial_{t}\eta + \nabla \cdot (\eta \bm{u} ) + \tfrac{1}{3}\eta \nabla \cdot \bm{u} &= \nabla \cdot \big( \tfrac{1}{3\sigma_{s}}\nabla \eta \big) + \sigma_{a} (\theta^{4} -\eta). \
\end{aligned}
\end{equation}
Here $t>0$ denotes time, $x \in \R^d$ denote space variables, with $d = 1,2,3$, and $\nabla$ is the space gradient operator. The unknowns are the density $\rho$, the velocity field $\bm{u} \in \R^d$, the absolute temperature $\theta$ of the fluid and the energy of radiation $\eta$ (which is equal to $\theta_{r}^4$ and depends on the radiation intensity). As usual, $p$ is the thermodynamic pressure and $E=e+\vert \bm{u} \vert^{2}/2$ is the total energy of the fluid, with $e$ the internal energy (per unit mass). The pressure and the internal energy are (smooth) function of the independent thermodynamic variables $\rho$ and $\theta$, that is, $p=p(\rho, \theta)$ and $e=e(\rho, \theta)$. The absorption coefficient $\sigma_{a}$ and the scattering coefficient $\sigma_{s}$ are assumed to be positive constants (for simplicity, the analysis by Buet and Despr\'es is made under the \emph{gray hypothesis}, that is, constant emissivity across wavelengths). The first three equations are the usual balance laws describing an inviscid, non-heat-conducting compressible fluid in which the momentum and energy equations have been modified accordingly to account for the effect of the radiation energy $\eta$. System \eqref{neq-rad-hyd} is a non-conservative system of hyperbolic-parabolic type, where the only parabolic term is proportional to $\Delta \eta$, due to the inviscid and non-heat-conducting nature of the fluid. Also notice that there are no damping terms, that the only balance term appears in the radiation equation and that for a constant state $(\rho, \bar{\bm{u}}, \bar{E}, \bar{\eta})$ to be a solution it must satisfy $\bar{\eta} = \bar{\theta}^4$, an equation that defines the equilibrium manifold. System \eqref{neq-rad-hyd} is often referred to as the non-equilibrium-diffusion limit \cite{MiW-Mi84,Pomr73}.

\subsection{Previous works on the non-equilibrium model}

The most studied  model in the context of radiation hydrodynamics is obtained from  \eqref{neq-rad-hyd} by neglecting the time derivative for the energy of radiation which, as a consequence, is time-asymptotically at equilibrium. Thus, the resulting system is an hyperbolic-elliptic coupled system, and it is also referred to as a non-equilibrium model because the gas is not in thermodynamical equilibrium.
There is a vast mathematical literature concerning these hyperbolic-elliptic coupled models which we will not review here; for an abridged list of references see \cite{BuDe04,LCG06,LiuKaw11,LMS07,LCG07,WaXi11a,LinC11,WnWn09,WaXi11b,WnXi11,LinGou11,KN01,KaNN99,KaNN03,LMNPZ09}. In contrast, the evolution non-equilibrium model \eqref{neq-rad-hyd} has been less analyzed. In most cases, the model is further endowed with physical fluid viscosities \cite{Jia17,WaXiYa23,KiHoKi23,JianXieZ09}, or with damping terms \cite{BlDuNec16,BlDuNec22}. For instance, Jiang \emph{et al.} \cite{JianXieZ09} showed the global (in time) existence and uniqueness of solution for the one-dimensional initial-boundary value problem of a viscous and heat-conducting fluid, for suitable smooth initial data and when the heat conductivity satisfies a physical growth condition with respect to the fluid temperature. The model studied by Jiang \emph{et al.} has been subsequently studied in other works. Jiang \cite{Jia17} established the global well-posedness in Sobolev spaces for the Cauchy problem in the perturbation framework in the case of the $3d$-polytropic ideal gas system, and obtained the convergence rate $(1+t)^{-3/4}$ for the $H^{3}(\R^3)-$norm of solutions when the initial data belongs to $L^1(\R^3)$. Similar results have been obtained by Wang \emph{et al.} \cite{WaXiYa23}, where the authors have used Littlewood-Paley decomposition. Later Kim \emph{et al.} \cite{KiHoKi23} extended the previous results to more general fluids, and have also obtained the decay rate $(1+t)^{-s/2}$ of the $H^{N}(\R^3)-$norm ($N\geq 3$) of the solutions when the initial data belongs to the homogeneous negative Sobolev space $\dot{H}^{-s}(\R^3)$, for $s\in [0,1/2]$. We also mention the work by Jiang and Zhou \cite{JiaZh18}, where they have established the local well-posedness of smooth solutions for the $3d$ polytropic gas case of the same model studied by Jiang \emph{et al.} \cite{JianXieZ09}, in which viscous and heat-conduction effect for the fluid are ignored and the radiation pressure term is considered but only for the momentum balance equation, that is, a term proportional to $\nabla \big( \tfrac{1}{3}\eta  \big)$ is incorporated. 

Regarding the models with damping, Blanc \emph{et al.} \cite{BlDuNec16} established the global in time existence of solutions to the Cauchy problem for the system \eqref{neq-rad-hyd} in three dimensions with damping and heat conduction effects for the fluid, provided the initial data is a small perturbation of a constant equilibrium state. The authors in \cite{BlDuNec16} have also obtained the same results for the equilibrium-diffusion limit, that is, system \eqref{neq-rad-hyd} where $\eta$ is replaced by $\theta^{4}$ and there is no equation for the energy of radiation.

To sum up, although there are several works that studied the well-posedness of the non-equilibrium-diffusion limit system, most of them do not consider the original inviscid, non-heat-conducting and non-damped system \eqref{neq-rad-hyd} derived by Buet and Depr\'es \cite{BuDe04}.

\subsection{The contributions of this work}

In the present work we are interested in the global (in time) existence of solutions for the Cauchy problem of system \eqref{neq-rad-hyd} with initial data being a small perturbation of a constant equilibrium state, and study their asymptotic behaviour. In our study, we combine local (in time) existence of solutions results with a priori energy estimates and (nonlinear) decay rates of the local solution, via an continuation argument to get the global existence and asymptotic behaviour of solutions. For the local existence of solutions we apply a result by Kawashima \cite{KaTh83} for hyperbolic-parabolic system of composite type. For obtaining the a priori energy estimates as well as the decay rates, we employ a technique developed in the same work of Kawashima \cite{KaTh83} and that consists in studying the \textit{dissipative structure} of the linear system around the constant equilibrium state.

The dissipative structure mentioned above refers to the fact that the system satisfies any of the properties stated in Theorem \ref{Eq-Th} below. As those properties are equivalent, the genuine coupling condition, which is straightforward to verify, tells us that in order to posses this structure, the system needs to have enough dissipation due to ``relaxation'' (zero-order space derivatives terms) and ``viscosity'' (second-order space derivatives) mechanisms. For the linear version around a constant equilibrium state of system \eqref{neq-rad-hyd},  it turns out that the one-dimensional space system has enough dissipation, but this fails to be the case when the space dimension is $d \geq 2$ (see Appendix \ref{non-gen-cou-md}). Thus, in the present work we consider the one-dimensional case because of this technicality. 

It should be noted that our results can be easily extended to the several space dimensions case if we take into consideration damping, as it has been done in \cite{BlDuNec16}, or viscosity effects for the fluid in system \eqref{neq-rad-hyd}. In this sense, our results complement those of Blanc \emph{et al.} \cite{BlDuNec16}, as we also provide decay rates for the solutions. Notably, the authors use the compensating matrix $K$, given by the Equivalence Theorem \ref{Eq-Th}, for performing some energy estimates but not for obtaining the decay rates, as we do it in the present work. In addition, although one might think that the one dimensional case is easier to handle, this is not the case at the time of performing the nonlinear energy estimate for getting the a priori energy estimates and the decay rates. In order to do so, the system has to be written in a very specific way and this is related to the existence of an entropy function/flux entropy pair (see \cite[Chapter IV]{KaTh83} or \cite[Section 7]{KY09}), which is the case for the system under consideration and that we explain in detail below; see Section \ref{subsec:yong}. It turns out that the entropy that works for system \eqref{neq-rad-hyd} is the classical one for the fluid plus the entropy associated to radiation and which was proposed by Buet and Despr\'es; see \cite[Corollary 2]{BuDe04},  or equation \eqref{form-Ent.} below. Up to the authors' knowledge this entropy structure of system \eqref{neq-rad-hyd} has not been reported in previous works. 

\subsection{Equations and assumptions}

In this paper we consider the non-equilibrium system of equations \eqref{neq-rad-hyd} describing a non-relativistic inviscid fluid under the effects of radiation. It is a non-conservative system in Eulerian variables and in several space dimensions. Substituting the equation for the energy of the radiation into the energy equation and performing some straightforward algebra, it is possible to recast system \eqref{neq-rad-hyd}  as the following equivalent system (details are left to the reader):

\newpage

\begin{equation}
\label{neq-rad-hyd-var}
\begin{aligned}
\partial_{t}\rho + \nabla \cdot (\rho \bm{u})&=0, \\
\partial_{t}(\rho \bm{u}) + \nabla \cdot \big( \rho \bm{u} \otimes \bm{u} \big) + \nabla \big( p +\tfrac{1}{3}\eta\big) &=0, \\ 
\partial_{t}(\rho E) + \nabla \cdot \big( \rho E \bm{u} + (p+\tfrac{1}{3}\eta)\bm{u} \big) &= - \sigma_{a} (\theta^{4} -\eta)+ \tfrac{1}{3}\eta \nabla \cdot\bm{u}, \\
\partial_{t}\eta + \nabla \cdot (\eta \bm{u} )  &= \nabla \cdot \big( \tfrac{1}{3\sigma_{s}}\nabla \eta \big) + \sigma_{a} (\theta^{4} -\eta)- \tfrac{1}{3}\eta \nabla \cdot \bm{u}. \
\end{aligned}
\end{equation}

In the forthcoming analysis, system \eqref{neq-rad-hyd-var} will often appear more suitable for our needs and we shall be working with both variants of the physical model almost without distinction.

Let us state the physical assumptions for system \eqref{neq-rad-hyd} (or equivalently, for system \eqref{neq-rad-hyd-var}) under consideration in this paper.
\begin{itemize}
\item[(H$_1$)] \phantomsection
\label{H1} The independent thermodynamic variables are the density $\rho >0$ and the absolute temperature $\theta>0$. They take values in convex open set
\[
\cD =\{ (\rho, \theta):\, \rho>0, \quad \theta>0 \}.
\]
\item[(H$_2$)] \phantomsection
\label{H2} The thermodynamic pressure $p$, the internal energy (per unit mass) of the fluid $e$, and the specific entropy of the fluid $s$ are smooth functions of $\rho$ and $\theta$, that is $p$, $e$, and $s\in C^{\infty}(\cD)$. They satisfy
\begin{equation}
\label{therm-prop}
p>0,\quad p_{\rho}>0,\quad p_{\theta}>0,\quad e_{\theta}>0,
\end{equation}
as well as the volumetric First Law of Thermodynamics
\[
d e=\theta\, ds -p d\left(\frac{1}{\rho} \right),
\]
which implies the relations 
\begin{equation}\label{therm-rel}
e_{\rho}=(p-\theta p_{\theta})/\rho^{2},\quad s_{\rho}=-p_{\theta}/\rho^{2},\quad s_{\theta}=e_{\theta}/\theta.
\end{equation}
\item[(H$_3$)] \phantomsection
\label{H3} The absorption coefficient $\sigma_{a}$ and the scattering coefficiente $\sigma_{s}$ are positive constants.  
\end{itemize}

\begin{remark}
\label{thrm-rmrk}
Notice that, for convenience, we have chosen $\rho$ and $\theta$ as the independent thermodynamic variables. In addition, it is to be observed that the hypotheses \eqref{therm-prop} on the thermodynamic potentials $p$ and $e$ are quite general and satisfy the conditions for an \emph{arbitrary Weyl fluid} \cite{We49}, namely, a generalized Gay-Lussac's law ($p_\theta > 0$), adiabatic increase of pressure effects compression ($p_\rho > 0$) and the increase of internal energy due to an increase of temperature at constant volume ($e_\theta > 0$). A typical example is that of an ideal gas satisfying
\[
p(\rho, \theta) = R \rho \theta, \qquad e(\rho, \theta) = \frac{R  \theta}{\gamma -1},
\]
where $R > 0$ is the universal gas constant and $\gamma > 1$ is the adiabatic exponent. In radiation hydrodynamics one may consider other types of potentials which fall under the category of Weyl; see \cite{MiW-Mi84,Pomr73,Cas04} for further information.
%
\end{remark}

As we have already mentioned, we specialize our stability analysis to the case of one space dimension ($d = 1$). Hence, let us write the one-dimensional version of system \eqref{neq-rad-hyd}:
\begin{equation}
\label{neq-rad-hyd-1d}
\begin{aligned}
\partial_{t}\rho + \partial_{x} (\rho u)&=0, \\
\partial_{t}(\rho u) + \partial_{x} \big( \rho u^2 + p +\tfrac{1}{3}\eta \big) &=0, \\ 
\partial_{t}\big( \rho E  + \eta \big) + \partial_{x} \big( \big( \rho E+ \eta +p +\tfrac{1}{3}\eta \big)u \big)&= \tfrac{1}{3\sigma_{s}}\partial_{xx}\eta, \\
\partial_{t}\eta + \partial_x(\eta u ) + \tfrac{1}{3}\eta \partial_{x} u &= \tfrac{1}{3\sigma_{s}}\partial_{xx}\eta  + \sigma_{a} (\theta^{4} -\eta).
\end{aligned}
\end{equation}

%
%
%
%

\subsection{Main result}
%

Let us now state the main result of the paper.

\begin{theorem}[global existence and time asymptotic decay]
\label{thmgloex} 
Suppose hypotheses \hyperref[H1]{\rm{(H$_1$)}} - \hyperref[H3]{\rm{(H$_3$)}} hold. Let $\bV=(\brho, \bu, \bthe, \bet) \in \R^4$ be a constant equilibrium state satisfying $\brho, \bthe, \bet > 0$, $\bthe^4 = \bet$. Assume that $V_0 - \bV \in \big(H^{s}(\R) \cap L^{1}(\R)\big)^4$ with $s\geq 3$. Then there exists a positive constant $\vep > 0$ such that if
\begin{equation}
\label{lessep}
\Vert V_0 - \bV \Vert_s + \Vert V_0 - \bV \Vert_{L^1} \leq \vep,
\end{equation}
then the Cauchy problem for system \eqref{neq-rad-hyd-1d} with initial condition $V(0) = V_0$ has a unique global solution $V(x,t)= (\rho, u, \theta,\eta)(x,t)$ satisfying 
\[
\begin{aligned}
&\rho-\brho, u-\bu, \theta-\bthe \in C\left( (0,\infty); H^{s}(\R) \right)    \cap C^{1}\left( (0,\infty); H^{s-1}(\R) \right),\\
& \eta-\bet \in C\left( (0,\infty); H^{s}(\R) \right) \cap C^{1}\left( (0,\infty); H^{s-2}(\R) \right).
\end{aligned}
\]
Moreover, the solution satisfies 
\begin{equation}
\label{fin-apr-ee-global}
\sup_{0 \leq \tau \leq t} \Vert (V-\bV)(\tau) \Vert_{s}^{2} + \int_{0}^{t} \Vert \partial_{x}(\rho, u, \theta)(\tau) \Vert_{s-1}^{2} + \Vert \partial_{x}\eta(\tau) \Vert_{s}^{2}\, d \tau \leq C \Vert V_{0} - \bV \Vert_{s,1}^{2},
\end{equation}
as well as
\begin{equation}
\label{nl-dec-global}
\Vert (V-\bV)(t) \Vert_{s-1} \leq C (1+t)^{-1/4} \Big( \Vert V_{0} - \bV \Vert_{s-1} + \Vert V_{0} - \bV \Vert_{L1} \Big),
\end{equation}
for all $t \geq 0$ and for some uniform constant $C > 0$.
\end{theorem}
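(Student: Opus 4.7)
The plan is to combine the local existence theorem of Kawashima \cite{KaTh83} for hyperbolic--parabolic systems of composite type with a uniform a priori estimate extracted from the entropy-symmetrized form of \eqref{neq-rad-hyd-1d}, and to promote the linear dissipative structure to a nonlinear decay rate via a Duhamel-based bootstrap. First I would pass to the perturbation variable $W := V - \bV$ and left-multiply the one-dimensional system by the Hessian at $\bV$ of the Buet--Despr\'es--augmented entropy (the entropy pair of Section~\ref{subsec:yong}) to recast it as
\[
A^{0}(W)\, \partial_{t} W + A^{1}(W)\, \partial_{x} W = \partial_{x}\bigl(B(W)\, \partial_{x} W\bigr) + L(W)\, W + \cN(W, \partial_{x}W),
\]
with $A^{0}, A^{1}$ symmetric, $A^{0} > 0$, $B \ge 0$ of rank one acting only on the $\eta$ slot, $L \le 0$ encoding the radiative relaxation $\sigma_{a}(\theta^{4}-\eta)$, and $\cN$ quadratic. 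Kawashima's theorem then yields a unique local solution on a maximal interval $[0, T_{\max})$ with the regularity stated in the theorem, valid under the smallness condition \eqref{lessep}.

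The core step is the a priori estimate \eqref{fin-apr-ee-global}, to be carried out on any subinterval on which $N(t) := \sup_{\tau\le t}\Vert W(\tau)\Vert_{s}$ remains below a small absolute threshold. Standard symmetric $H^{s}$ energy estimates first produce
\[
\tfrac{d}{dt}\Vert W \Vert_{s}^{2} + c\,\bigl(\Vert \partial_{x}\eta\Vert_{s}^{2} + \Vert \theta^{4}-\eta\Vert_{s}^{2}\bigr) \le C\,N(t)\, \Vert \partial_{x} W \Vert_{s-1}^{2},
\]
which captures only the parabolic dissipation on $\eta$ and the relaxation $\theta^{4}-\eta$. To recover dissipation for the purely hyperbolic variables $(\rho, u, \theta)$, I would exploit the genuine coupling in one space dimension (verified earlier in the paper) and invoke, via the Equivalence Theorem~\ref{Eq-Th}, a Kawashima--Shizuta compensating matrix $K$; then I would add to the standard energy the antisymmetric cross-functional $-\kappa \sum_{1\le |\alpha|\le s}\< \mathrm{i}\, K\, \widehat{\partial^{\alpha} W}, \widehat{\partial^{\alpha} W}\>$ for a small $\kappa>0$. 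This modified functional remains equivalent to $\Vert W\Vert_{s}^{2}$ and produces full dissipation $\Vert \partial_{x}(\rho, u, \theta)\Vert_{s-1}^{2} + \Vert \partial_{x}\eta\Vert_{s}^{2}$ on the right-hand side; the nonlinear remainders are absorbed by Moser-type product estimates together with the smallness of $N(t)$, yielding \eqref{fin-apr-ee-global}.

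For the time-decay bound \eqref{nl-dec-global} I would pass to the linearized semigroup $e^{t\cL}$ around $\bV$. The equivalence theorem combined with genuine coupling furnishes the pointwise spectral estimate $\Re \lambda(\mathrm{i}\xi) \le -c\,\xi^{2}/(1+\xi^{2})$, which yields, by Plancherel and a low/high-frequency split,
\[
\Vert \partial_{x}^{k} e^{t\cL}W_{0}\Vert_{L^{2}} \le C(1+t)^{-1/4 - k/2}\,\Vert W_{0}\Vert_{L^{1}} + C\, e^{-ct}\,\Vert \partial_{x}^{k} W_{0}\Vert_{L^{2}}.
\]
Writing the full problem in mild form $W(t) = e^{t\cL} W_{0} + \int_{0}^{t} e^{(t-\tau)\cL}\, \cN(\tau)\, d\tau$ and setting $M(t) := \sup_{0\le \tau\le t}(1+\tau)^{1/4}\Vert W(\tau)\Vert_{s-1}$, the Duhamel integral is controlled by the linear decay and by product estimates on $\cN$, yielding $M(t) \le C\Vert W_{0}\Vert_{s-1,1} + C\,M(t)^{2}$; smallness of $\vep$ closes this into \eqref{nl-dec-global}, after which the bound \eqref{fin-apr-ee-global} combined with local existence extends the solution to all $t\ge 0$ by continuation.

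I expect the main obstacle to be the nonlinear energy step. Since the sole parabolic dissipation acts on $\eta$ and the only damping is the radiative relaxation $\sigma_{a}(\theta^{4}-\eta)$, dissipation for $(\rho,u,\theta)$ is transferred only through the coupling, and making this quantitative at the $H^{s}$ level requires (i) a careful choice of weights in the $K$-cross functional to ensure compatibility with $A^{0}$, (ii) delicate treatment of the non-conservative term $\tfrac{1}{3}\eta\, \partial_{x} u$ and of the nonlinearity $\theta^{4}-\eta$, expanded around $\bthe^{4} = \bet$ and tied back to the entropy variables through the thermodynamic identities \eqref{therm-rel}, and (iii) uniform commutator estimates for the state-dependent symmetrizer $A^{0}(W)$. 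Once this closes, the linear decay and the Duhamel bootstrap are essentially standard.
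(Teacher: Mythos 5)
Your overall architecture tracks the paper closely: local existence from Kawashima's composite-type theorem, entropy symmetrization based on the Buet--Despr\'es entropy, compensating-matrix energy estimates, and a Duhamel bootstrap using the $(1+t)^{-1/4}$ linear decay. The gap is in the Duhamel step, and it is not a technical nuisance but an obstruction that would stop the argument from closing. After linearizing the nonlinearity around $\bV$, not all of $\cN$ can be written as an $x$-derivative: the Taylor remainder of the relaxation term $\sigma_a(\theta^4-\eta)$ and the commutator piece coming from the non-conservative product $\tfrac{1}{3}\eta\,\partial_x u$ produce terms that appear undifferentiated. Feeding such a term into Duhamel with the basic estimate $\Vert e^{(t-\tau)\cL}h\Vert_0 \lesssim (1+(t-\tau))^{-1/4}\Vert h\Vert_{L^1}+e^{-c(t-\tau)}\Vert h\Vert_0$ and $\Vert h(\tau)\Vert_{L^1}\lesssim (1+\tau)^{-1/2}M(t)^2$ gives the time integral
\[
\int_0^t (1+(t-\tau))^{-1/4}(1+\tau)^{-1/2}\,d\tau ,
\]
which is of order $(1+t)^{1/4}$ and grows; the bootstrap inequality $M(t)\le C\Vert W_0\Vert_{s-1,1}+CM(t)^2$ you wrote down is therefore not obtainable this way. (The derivative part of the nonlinearity is fine: the extra $\partial_x$ upgrades the decay to $(1+(t-\tau))^{-3/4}$ and the integral converges.)

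What the paper adds to repair exactly this is the structural observation (Remark~\ref{rem:nonconsort}, expressions \eqref{tipq-form}--\eqref{order-pq}) that the non-derivative nonlinear source $\tilde q$ takes values in $\mathcal{M}^\bot$, the orthogonal complement of $\ker\tiL$, because both the non-conservative energy exchange and the relaxation remainder enter the third and fourth equations with opposite signs. For sources valued in $\mathcal{M}^\bot$, Lemma~\ref{lin-dec-shr-lem} (proved via the spectral analysis of Appendix~\ref{sptr-anal}) upgrades the semigroup decay to $(1+t)^{-\ell/2-3/4}$, which restores integrability of the Duhamel kernel and lets the bootstrap close. This is also why the paper insists on the specific splitting $\tilde g_x+\tilde q$ of the nonlinearity (moving as much as possible of $C(W)W_x$ into the derivative part, and keeping the residual $-\tfrac13(u-\bu)\eta_x$ in $\tilde q$ where its $\mathcal{M}^\bot$ structure can be used). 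Without identifying this improved decay mechanism, your proposal has no route to \eqref{nl-dec-global}, and since the paper's a priori estimate \eqref{fin-apr-ee-global} is itself obtained by combining that decay bound (Corollary~\ref{cor-miss-apr-ee}) with Lemmata~\ref{lem-apr-ee-1}--\ref{lem-apr-ee-2}, the global-existence continuation argument would not close either.
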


\begin{remark}
It is to be observed that Theorem \ref{thmgloex} establishes the global well-posedness of classical solutions (thanks to the Sobolev embedding) to the non-equilibrium diffusion limit system \eqref{neq-rad-hyd} in the perturbation framework and in one space dimension. The constant state is supposed to belong to the equilibrium manifold (that is, it satisfies $\bthe^4 = \bet$), although the system itself is in the non-equilibrium regime. The initial (perturbation) conditions are assumed to have finite energy ($V_0 - \bV \in H^{s}(\R)^4$, $s \geq 3$) and finite mass ($V_0 - \bV \in L^{1}(\R)^4$). If the initial perturbations are sufficiently small then global classical solutions exist and decay as $t \to \infty$. Theorem \ref{thmgloex} also determines rates of decay for the perturbation variables of algebraic type.
\end{remark}


\subsection*{Plan of the paper}

In Section \ref{sec:local} we recall the notions of strict dissipativity and genuine coupling, and state the local existence result and their consequences (local energy estimates). In Section \ref{subsec:yong} we extend the definition of entropy to the case of non-conservative viscous balance laws, symmetrize the system and define the new perturbation variables. Section \ref{sec:linear} examines the linear dissipative structure of the system in the new variables and establishes the desired linear decay rates. The last Section \ref{secnonlinear} is devoted to closing the nonlinear energy estimates and to the proof of the main Theorem \ref{thmgloex}. We make some final comments in the dicussion Section \ref{secdicussion}. In addition, we have included three Appendices. Appendix \ref{app1} states the local existence theorem in several space dimensions. In Appendix \ref{non-gen-cou-md} we prove that the system is not genuinely coupled when the dimension is $d \geq 2$. Appendix \ref{sptr-anal} contains a spectral analysis of the generated semigroup, which is needed for some of the energy estimates in Section \ref{sec:linear}.

\subsection*{Notations} 
Standard Sobolev spaces of functions on the real line will be denoted as $H^s(\R)$, with $s \in \R$, endowed with the standard inner products and norms. The norm on $H^s(\R)$ will be denoted as $\| \cdot \|_s$ and the norm in $L^2$ will be henceforth denoted by $\| \cdot \|_0$. Any other $L^p$ -norm will be denoted as $\| \cdot \|_{L^p}$ for $p \geq 1$. We denote the real and imaginary parts of a complex number $\lambda \in \C$ by $\Re\lambda$ and $\Im\lambda$, respectively. For any $\alpha \in \R$ we denote as $\lfloor \alpha \rfloor \in \Z$ the integer satisfying $\lfloor \alpha \rfloor \leq \alpha < \lfloor \alpha \rfloor+1$. $0_{p \times q}$ will denote the zero $p \times q$ block matrix, for any $p, q \in \N$. The square zero and identity $p \times p$ matrices, with $p \in \N$, will be written as $0_{p}$ and $I_{p}$, respectively. The canonical (row) basis in $\R^{1\times n}$ is denoted by $\hat{e}_j$, $1 \leq j \leq n$.



\section{Preliminaries}
\label{sec:local}

In this section we present some preliminary material. Section \ref{secequiv} contains the definition of strict dissipativity and the Equivalence Theorem by Shizuta and Kawashima \cite{ShKa85} for second order systems (see Theorem \ref{Eq-Th} below). In Section \ref{sec:localex} we state the local (in-time) existence of solutions to the Cauchy problem for system \eqref{neq-rad-hyd} in one space dimension and with initial data close to a constant equilibrium state. This result is a consequence of a more general theorem by Kawashima \cite{KaTh83} (see Theorem \ref{loc-ex-th} below). In Section \ref{secapriori} we present some \textit{a priori} energy estimates for the local solutions.
%

\subsection{Strict dissipativity and the Equivalence Theorem}
\label{secequiv}

Consider a linear second order system in any space dimension $d\geq 1$ of the form
\begin{equation}\label{gen-lin-sys-d}
A^0U_{t}+\sum_{j=1}^{d}A^{j}U_{x_j}+LV =\sum_{j,k=1}^{d}B^{jk}U_{x_{j}x_{k}},
\end{equation}
where $U=U(x,t)\in \R^{n}$ is the vector of state variables, and $x\in \R^d$ and $t\geq 0$ represent space and time, respectively. The coefficients $A^0$, $A^j$, $L$ and $B^{jk}$, $j,k=1,\ldots, d$, are constant square real matrices of order $n$ satisfying:
\begin{itemize}
\item[(A$_1$)] \phantomsection
\label{A1} $A^0$, $A^j$, $j=1,\ldots,d$, are symmetric with $A^0$ positive definite ($A^0>0$);
\item[(A$_2$)] \phantomsection
\label{A2} $L$, $B^{jk}$, $j,k=1,\ldots,d$, are symmetric with $L$ positive semi-definite ($L \geq 0$) and the symbol $\sum_{j,k=1}^{d}\omega_{j}\omega_{k}B^{jk}$ is positive semi-definite for all $\omega\in\bbS^{d-1}$.
\end{itemize}

Apply the Fourier transform to system \eqref{gen-lin-sys-d} in order to obtain
\begin{equation}\label{gen-lin-sys-d-F}
A^0\hU_{t}+\big( i \vert \xi \vert A(\omega) + L + \vert \xi \vert^2 B(\omega) \big) \hU=0,
\end{equation}
where
\[
A(\omega):= \sum_{j=1}^{d}\omega_{j}A^j, \qquad B(\omega):= \sum_{j,k=1}^{d}\omega_{j}\omega_{k}B^{jk},
\]
for $\omega=\xi/\vert \xi \vert$, $\xi \in \R^d$, $\xi \neq 0$. The solutions to this linear system are related to the eigenvalue problem
\begin{equation}\label{gen-lin-sys-d-eig}
\big(\lambda A^0+ i\vert \xi \vert A(\omega) + L + \vert \xi \vert^{2} B(\omega) \big)v=0,
\end{equation}
with $\lambda=\lambda(\xi)\in \C$, $v=v(\xi)\in\R^n$, $v\neq 0$. Under these circumstances, we have the following Equivalence Theorem by Shizuta and Kawashima \cite{ShKa85}.

\begin{theorem}[equivalence theorem \cite{ShKa85}]
\label{Eq-Th}
Under assumptions \hyperref[A1]{\rm{(A$_1$)}} -- \hyperref[A2]{\rm{(A$_2$)}}, the following statements are equivalent:
\begin{itemize}
\item[(i)] For each $\xi\in\R^d$, $\xi\neq 0$, the real part of $\lambda=\lambda(\xi)$, solutions of \eqref{gen-lin-sys-d-eig}, satisfy $
\Re \, \lambda <0$.
\item[(ii)] For any $\psi\in\R^{n}$, $\psi \neq 0$, satisfying $B(\omega)\psi=L\psi=0$ for some $\omega\in \bbS^{d-1}$, there holds
\[
\mu A^0\psi + A(\omega)\psi \neq 0,\,\, \mbox{for any}\,\, \mu\in\R.
\]
\item[(iii)] There exists a smooth real matrix valued function $K(\omega)$, $\omega\in\bbS^{d-1}$,
\[
\bbS^{d-1}\ni \omega \longmapsto K(\omega)\in M_{n}(\R)
\]
such that
\begin{itemize}
\item[(a)] $K(\omega)A^0$ is skew-symmetric for all $\omega\in\bbS^{d-1}$, and
\item[(b)] $[K(\omega)A(\omega)]^{s}+B(\omega)+L>0$ for all $\omega\in \bbS^{d-1}$,
\end{itemize}
where $[M]^{s}$ denotes the symmetric part of the matrix $M$, that is $[M]^s = \tfrac{1}{2}(M+M^{\top})$.
\item[(iv)] There exists a positive constant $c$ such that there holds
\[
\Re \lambda \leq - \frac{ c \vert \xi \vert^2}{1+\vert \xi \vert^2},
\]
for $\lambda= \lambda(\xi)$ solution of \eqref{gen-lin-sys-d-eig}, for all $\xi \in\R^d$, $\xi\neq 0$.
\end{itemize}
\end{theorem}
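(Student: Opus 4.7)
The plan is to prove the cyclic chain $(iv)\Rightarrow(i)\Rightarrow(ii)\Rightarrow(iii)\Rightarrow(iv)$. Three of the four links are direct, and the link $(ii)\Rightarrow(iii)$, the construction of the compensator $K(\omega)$, is the heart of the argument. The implication $(iv)\Rightarrow(i)$ is immediate, since $|\xi|^2/(1+|\xi|^2)>0$ whenever $\xi\neq 0$. For $(i)\Rightarrow(ii)$ I would argue by contrapositive: if the genuine coupling condition fails, there exist $\omega\in\bbS^{d-1}$, $\psi\neq 0$ and $\mu\in\R$ with $B(\omega)\psi=L\psi=0$ and $A(\omega)\psi+\mu A^0\psi=0$. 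Substituting $v=\psi$ and $\lambda=i|\xi|\mu$ into \eqref{gen-lin-sys-d-eig} makes every term vanish, producing a purely imaginary eigenvalue for every $\xi$ on the ray generated by $\omega$, contradicting (i).

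For $(iii)\Rightarrow(iv)$ I would carry out a Fourier-pointwise Lyapunov estimate. On the time-dependent Fourier version $A^0\hat U_t+(i|\xi|A(\omega)+L+|\xi|^2 B(\omega))\hat U=0$, the natural energy $E=\tfrac{1}{2}\hat U^*A^0\hat U$ satisfies, using the symmetry of $A^0,A(\omega),L,B(\omega)$,
\[
\dot E=-\hat U^*L\hat U-|\xi|^2\hat U^*B(\omega)\hat U,
\]
which provides decay only on the range of $L+|\xi|^2 B(\omega)$. To recover the missing modes I would introduce the correction $F=\Im(\hat U^*K(\omega)A^0\hat U)$, well-defined because the skew-symmetry of $K(\omega)A^0$ makes $\hat U^*K(\omega)A^0\hat U$ purely imaginary. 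A direct computation shows
\[
\dot F=-2|\xi|\,\hat U^*[K(\omega)A(\omega)]^s\hat U+\text{l.o.t.},
\]
where the lower-order terms involve $K(\omega)L$ and $K(\omega)B(\omega)$ and can be absorbed via Cauchy--Schwarz by $\hat U^*L\hat U$, $|\xi|^2\hat U^*B(\omega)\hat U$ and a small multiple of $|\xi|^2(1+|\xi|^2)^{-1}\|\hat U\|^2$. The modified energy $\mathcal E=E+\alpha|\xi|(1+|\xi|^2)^{-1}F$ with $\alpha>0$ small satisfies $\mathcal E\sim\|\hat U\|^2$ and, invoking (iii)(b) to combine the three positive contributions $[K(\omega)A(\omega)]^s+B(\omega)+L\geq c_0 I$, yields $\dot{\mathcal E}\leq -c|\xi|^2(1+|\xi|^2)^{-1}\mathcal E$. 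Applied to the particular solution $\hat U(t)=e^{\lambda t}v$ associated to any eigenpair of \eqref{gen-lin-sys-d-eig}, this gives the quantitative bound in (iv).

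The core step is $(ii)\Rightarrow(iii)$. Fixing $\omega\in\bbS^{d-1}$ and equipping $\R^n$ with the $A^0$-inner product, set $N(\omega):=L+B(\omega)\geq 0$, let $\Pi_0(\omega)$ be the $A^0$-orthogonal projection onto $\mathcal N(\omega):=\ker N(\omega)$, and let $\Pi_1=I-\Pi_0$. The genuine coupling condition (ii) translates into the statement that the $A^0$-self-adjoint operator $S(\omega):=\Pi_0(A^0)^{-1}A(\omega)\Pi_0$ has no eigenvector lying in $\mathcal N(\omega)$; equivalently, on each spectral subspace of $S(\omega)|_{\mathcal N(\omega)}$ the off-diagonal coupling $\Pi_1(A^0)^{-1}A(\omega)\Pi_0$ is injective. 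I would then define $K(\omega)$ as the $A^0$-antisymmetric operator whose matrix in the splitting $\mathcal N(\omega)\oplus \mathcal N(\omega)^{\perp_{A^0}}$ is block-antidiagonal, with the $\mathcal N\to\mathcal N^\perp$ block proportional to the adjoint of this coupling on each spectral subspace. With a sufficiently small proportionality constant, this forces $[K(\omega)A(\omega)]^s$ to be strictly positive on $\mathcal N(\omega)$ while remaining small enough off $\mathcal N(\omega)$ not to destroy the coercivity of $B(\omega)+L$. Smooth $\omega$-dependence would be obtained by constructing the spectral projectors via Kato's analytic perturbation theory near each $\omega_0\in\bbS^{d-1}$ and gluing by a partition of unity, and a compactness argument on $\bbS^{d-1}$ would then produce a uniform positivity constant.

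The main obstacle is precisely this construction of $K(\omega)$. The algebraic book-keeping needed to guarantee simultaneously that $[K(\omega)A(\omega)]^s+B(\omega)+L$ is positive definite on the whole of $\R^n$ and that $K(\omega)$ depends smoothly on $\omega$ is delicate, especially at points where either $\dim\ker N(\omega)$ jumps or eigenvalues of $S(\omega)$ collide, since both events destroy analyticity of the naive construction. Circumventing these degeneracies is the content of the Shizuta--Kawashima argument in \cite{ShKa85}, which I would follow.
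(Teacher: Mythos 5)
First, note that the paper itself gives no proof of Theorem \ref{Eq-Th}: the statement is quoted from Shizuta and Kawashima \cite{ShKa85} and used as a black box, so there is no internal argument to compare yours against. Judged on its own terms, your cyclic scheme $(iv)\Rightarrow(i)\Rightarrow(ii)\Rightarrow(iii)\Rightarrow(iv)$ is the standard architecture, and three of the four links are correct as written. $(iv)\Rightarrow(i)$ is immediate; your contrapositive for $(i)\Rightarrow(ii)$ is exactly right (with $v=\psi$ and $\lambda=i|\xi|\mu$ every term of \eqref{gen-lin-sys-d-eig} vanishes, since $L\psi=B(\omega)\psi=0$ and $A(\omega)\psi=-\mu A^0\psi$ with $A^0\psi\neq0$); and the corrected Lyapunov functional $\mathcal E=E+\alpha|\xi|(1+|\xi|^2)^{-1}F$ for $(iii)\Rightarrow(iv)$ is the classical Kawashima energy method, which does close once (iii)(b) is invoked and the cross terms involving $K(\omega)L$ and $K(\omega)B(\omega)$ are absorbed by Cauchy--Schwarz.

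The genuine gap is $(ii)\Rightarrow(iii)$, which you describe but do not actually prove, and explicitly defer to \cite{ShKa85}. Two concrete points. First, your reformulation of (ii) is misstated: the compressed operator $S(\omega)=\Pi_0(A^0)^{-1}A(\omega)\Pi_0$ restricted to $\mathcal N(\omega)$ is $A^0$-self-adjoint on that subspace and therefore \emph{always} has eigenvectors lying in $\mathcal N(\omega)$ whenever $\mathcal N(\omega)\neq\{0\}$. The correct translation of (ii) is that no eigenvector of the \emph{full} operator $(A^0)^{-1}A(\omega)$ lies in $\mathcal N(\omega)$, which reduces (as your following clause correctly suggests) to injectivity of $\Pi_1(A^0)^{-1}A(\omega)$ on each eigenspace of $S(\omega)|_{\mathcal N(\omega)}$. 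Second, the quantitative heart of the implication --- choosing the antidiagonal blocks of $K(\omega)$ small enough that $[K(\omega)A(\omega)]^s$ is coercive on $\mathcal N(\omega)$ without destroying the positivity of $B(\omega)+L$ on the complement, and doing so uniformly and smoothly in $\omega$ despite possible jumps in $\dim\ker(L+B(\omega))$ and eigenvalue crossings of $A(\omega)$ --- is precisely the content of the Shizuta--Kawashima construction and is absent from your sketch. Since the paper likewise treats the theorem as a citation, this omission is consistent with how the result is used here, but as a standalone proof the proposal is incomplete at its central step.
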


When the system \eqref{gen-lin-sys-d} satisfies (i) in Theorem \ref{Eq-Th} we say that the system is \textit{strictly dissipative}, and if (ii) holds true we say that the system satisfies the \textit{genuine coupling} condition or to be \textit{genuinely coupled}. The matrix $K$ in Theorem \ref{Eq-Th} (iii) is known as a \textit{compensating matrix} for the system \eqref{gen-lin-sys-d}.

\begin{remark}
The seminal work by Kawashima and Shizuta \cite{KaSh88a,ShKa85} established the conditions for the strict dissipativity of a large number of second order systems (such as linearizations around constant states of the Navier-Stokes and the Navier-Stokes-Fourier models). Thanks to the Equivalence Theorem \ref{Eq-Th}, the genuine coupling condition (which is an algebraic condition) implies the existence of compensating matrix functions which are very useful to obtain energy decay estimates at the linear level. These estimates can be applied to the nonlinear problem around constant states or even around small amplitude shock profiles.
\end{remark}


\subsection{Local existence}
\label{sec:localex}

Consider the one-dimensional non-equilibrium diffusion radiation  hydrodynamics system \eqref{neq-rad-hyd-1d} and define the state variables
\[
V(x,t):= (\rho, u, \theta, \eta)(x,t),
\]
which belong to the set
\begin{equation}\label{cV}
\cV :=\left\lbrace (\rho, u,\theta, \eta)\in \R^{4}: \rho \geq C_{1},\, \theta \geq C_{2},\, \eta\geq C_{3} \right\rbrace,
\end{equation}
for some fixed (but arbitrary) positive constants $C_{i}$, $i=1,2,3$.
We now pose the initial value problem of system \eqref{neq-rad-hyd-1d} with initial data
\begin{equation}\label{in-dat-1d}
V(x,0)=V_{0}(x)=(\rho_{0},u_{0},\theta_{0},\eta_{0})(x).
\end{equation}
For system \eqref{neq-rad-hyd-1d}, the \textit{equilibrium manifold} is defined by
\[
\cU_{\mathrm{eq}} := \left\lbrace V\in \cV: \eta=\theta^4  \right\rbrace;
\]
for more details, see Section \ref{subsec:yong}. Let us fix a constant equilibrium state $\bV=(\brho, \bu, \bthe, \bet) \in \cU_{\mathrm{eq}}$ of system \eqref{neq-rad-hyd-1d}, satisfying thus the relation
\[
\bet = \bthe^4.
\]
Then the local existence of perturbations of such equilibrium state is given by the next result.
\begin{theorem}\label{loc-ex-th-1d}
Let $\bV=(\brho, \bu, \bthe, \bet) \in \cU_{\mathrm{eq}}$ be a constant equilibrium state for system \eqref{neq-rad-hyd-1d}. Consider the initial value problem \eqref{neq-rad-hyd-1d} and \eqref{in-dat-1d} with $V_{0}-\bV \in H^{s}(\R)^4$, $s\geq 2$. Then there exists $\epsilon > 0 $ such that if
\[
a:= 
\Vert V_{0}-\bV \Vert_{s} \leq \epsilon,
\]
then we have that for $m_{1}\leq \rho_{0}(x) \leq M_{1}$, $m_{2}\leq \theta_{0}(x) \leq M_{2}$, $m_{3}\leq \eta_{0}(x) \leq M_{3}$ for all $x\in \R$ and positive constants $0 < m_{i} < M_{i}$, $i=1,2,3$, and for some $T_{0}=T_{0}(a)$, there exists a unique solution $V(x,t)$ to the initial value problem satisfying
\begin{equation}\label{reg-loc-ex-1d}
\begin{aligned}
&\rho-\brho, u-\bu, \theta-\bthe \in C\left( [0,T_{0}]; H^{s}(\R) \right)    \cap C^{1}\left( [0,T_{0}]; H^{s-1}(\R) \right),\\
& \eta-\bet \in C\left( [0,T_{0}]; H^{s}(\R) \right) \cap C^{1}\left( [0,T_{0}]; H^{s-2}(\R) \right).
\end{aligned}
\end{equation}
Moreover, the following estimate
\begin{equation}\label{loc-ex-ee-1d}
\sup_{0 \leq \tau \leq t} \Vert (V-\bV)(\tau) \Vert_{s}^{2} + \int_{0}^{t}  \Vert \partial_{x}(\rho, u, \theta)(\tau)\Vert_{s-1}^{2}+ \Vert \partial_{x} \eta(\tau) \Vert_{s}^{2}\, d \tau \leq C_{0} \Vert V_{0}-\bV \Vert_{s}^{2}
\end{equation}
holds for all $t\in [0,T_{0}]$ and for some positive constant $C_{0}$ depending on $\Vert V_{0}- \bV \Vert_{s}$.
\end{theorem}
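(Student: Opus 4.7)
The plan is to derive Theorem \ref{loc-ex-th-1d} as a specialization to $d=1$ of the general hyperbolic--parabolic local well-posedness theorem of Kawashima \cite{KaTh83}, a multidimensional version of which will be recalled in Appendix \ref{app1}. The first preparatory step is to recast \eqref{neq-rad-hyd-1d} as a quasilinear system in the primitive variables $V = (\rho, u, \theta, \eta)^\top$. Using the Gibbs relation in hypothesis \hyperref[H2]{(H$_2$)} together with $e_\theta > 0$, one inverts the map between conservative and primitive variables and obtains a system of the form $A_0(V)\,\partial_t V + A_1(V)\,\partial_x V + Q(V)(V-\bV) = B\,\partial_{xx} V$, where $B$ has a single nonzero entry equal to $1/(3\sigma_s)$ in the $(\eta,\eta)$ slot (reflecting the inviscid, non-heat-conducting character of the fluid) and $Q$ encodes the relaxation term $\pm\sigma_a(\theta^4 - \eta)$. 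Strict positivity conditions \eqref{therm-prop} ensure that this change of variables is smooth on $\cV$.

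Next I would symmetrize the hyperbolic $(\rho, u, \theta)$-block using the Godunov-type symmetrizer associated with the specific entropy $s(\rho,\theta)$, whose existence is guaranteed by the Gibbs relation. The radiation-pressure coupling $\partial_x(\eta/3)$ in the momentum equation and the compression coupling $(\eta/3)\partial_x u$ in the $\eta$-equation must be made symmetric as well; this is achieved by complementing the classical fluid entropy with the radiation entropy of Buet--Despr\'es, which is the construction of Section \ref{subsec:yong}. After symmetrization, $A_0(V)$ is symmetric positive definite, $A_1(V)$ is symmetric, and $B$ is symmetric positive semidefinite; all coefficients are smooth on $\cV$ in view of \eqref{therm-prop}. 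The hypotheses of Kawashima's theorem are thus satisfied, and invoking it yields a unique solution on $[0,T_0]$ with $T_0 = T_0(a)$ and with the split regularity \eqref{reg-loc-ex-1d}: the hyperbolic block loses one spatial derivative in time while $\eta$ loses two, reflecting the parabolic term $(1/3\sigma_s)\partial_{xx}\eta$. Strict positivity of $\rho, \theta, \eta$ on $[0, T_0]$ follows from the embedding $H^s(\R) \hookrightarrow L^\infty(\R)$ for $s \geq 2$ together with a standard continuity argument: for $\epsilon$ small enough, $\Vert V - \bV\Vert_{L^\infty} \leq C\Vert V-\bV\Vert_s$ keeps the solution in a small neighborhood of $\bV$ inside $\cV$ on the short interval $[0, T_0]$.

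The energy estimate \eqref{loc-ex-ee-1d} would then follow from applying $\partial_x^k$ to the symmetrized system for $0 \leq k \leq s$, pairing with $A_0\,\partial_x^k(V-\bV)$, integrating in $x$, and controlling commutator and nonlinear terms by Moser-type tame bilinear estimates. The parabolic $(\eta,\eta)$-block directly generates the dissipative term $\int_0^t \Vert \partial_x \eta \Vert_s^2\,d\tau$; the hyperbolic part contributes no dissipation but closes in $L^\infty_t H^s_x$, and the additional integral $\int_0^t \Vert \partial_x(\rho, u, \theta)\Vert_{s-1}^2\,d\tau$ is absorbed into the short-time Gronwall constant (genuine coupling, needed for the global estimates in Section \ref{secnonlinear}, is not required at the local level). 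The principal obstacle I anticipate is the symmetrization step: identifying a single positive definite $A_0(V)$ that simultaneously accommodates the fluid block, the parabolic radiation equation, and their cross-couplings requires extending the classical Godunov entropy framework to non-conservative parabolic balance laws. This is exactly the new contribution of Section \ref{subsec:yong}, and once it is in place the rest of the proof reduces to a direct application of Kawashima's general machinery.
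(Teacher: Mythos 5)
Your high-level plan matches the paper's: write the system in primitive variables $V=(\rho,u,\theta,\eta)$, symmetrize, invoke Kawashima's local existence result from \cite{KaTh83}, and read off the composite regularity and the estimate. The gap is in your symmetrization step, where you misplace the role of Section~\ref{subsec:yong}. You claim that accommodating the cross-couplings $\partial_x(\eta/3)$ and $\tfrac{1}{3}\eta\,\partial_x u$ forces one to build a single positive-definite symmetrizer for the full $4\times 4$ system via the Buet--Despr\'es entropy. That is not what the paper does, and in fact it cannot work as you describe: as noted after \eqref{coeffA1}, the entropy-symmetrized matrix $\tilde A^1(U)$ has $a_{24}=4\eta^{5/4}/3$ and $a_{42}=4\theta\eta/3$, which coincide only on the equilibrium manifold $\eta=\theta^4$. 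The Buet--Despr\'es entropy therefore does \emph{not} symmetrize the quasilinear system away from equilibrium, so it cannot serve as the symmetrizer needed to apply a local existence theorem for quasilinear symmetric systems in a full neighbourhood of $\bV$ in $\cV$.

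What the paper actually uses (Appendix~\ref{app1}) is Kawashima's framework for \emph{composite} hyperbolic--parabolic systems: system \eqref{neq-rad-hyd-quas} is recast as \eqref{quas-hyp-par-form}, i.e.\ a symmetric hyperbolic block in $(\rho,u,\theta)$ plus a scalar parabolic equation for $\eta$, and the cross-coupling first-order terms are absorbed into the nonhomogeneous right-hand sides $f_1(V,\nabla\eta)$ and $f_2(V,\nabla V)$ --- they need not be symmetrized at all. The hyperbolic block is made symmetric by elementary scalar multiplications of the equations by $\theta p_\rho/\rho$ and $\theta$, which works on all of $\cD$ under \eqref{therm-prop}; no entropy construction is required. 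The Buet--Despr\'es entropy of Section~\ref{subsec:yong} enters only later, for the change of perturbation variables, the dissipativity analysis, and the sharpened linear decay (Lemma~\ref{lin-dec-shr-lem}) needed to close the \emph{nonlinear} estimates in Section~\ref{secnonlinear}. So your proof, as written, both overcomplicates the local step and relies on a symmetrizer that fails off the equilibrium manifold; it should be replaced by the composite-type reduction of Appendix~\ref{app1} and a citation of Theorem~2.9 in \cite{KaTh83}.
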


\begin{remark}\label{cld-loc-sol-rm}
It is to be observed that, since $s\geq 2$, the continuous embedding $H^s(\R)\subset L^{\infty}(\R)$ implies that for sufficiently small initial data the solution $V(x,t)=(\rho, u, \theta, \eta)(x,t)$ will remain close to the constant equilibrium state $\bV=(\brho,\bu,\bthe, \bet)$ and will be such that
\[
\bam_{1}\leq \rho(x,t) \leq \bM_{1},\,\, \bam_{2}\leq \theta(x,t) \leq \bM_{2},\,\, \bam_{3}\leq \eta
(x,t) \leq \bM_{3},
\]
for all $x\in \R$ and $t\in [0,T_{0}]$, and some uniform positive constant $\bam_{i}$ and $\bM_{i}$, $i=1,2,3$, so that $V(x,t)\in \cV$, with $\cV$ as in \eqref{cV} for some positive constants $C_{i}=C_{i}(a)$, $i=1,2,3$. 
\end{remark}

Motivated by the energy estimate \eqref{loc-ex-ee-1d}, for any local solution on a time interval $[0,T]$ of the Cauchy problem \eqref{neq-rad-hyd-1d} and \eqref{in-dat-1d}, we define
\begin{equation}\label{Es}
E_{s}(t):= \sup_{0 \leq \tau \leq t}\Vert (V- \bV)(\tau) \Vert_{s}^{2},    
\end{equation}
\begin{equation}\label{Fs}
F_{s}(t):= \int_{0}^{t} \Vert \partial_{x}(\rho, u, \theta)(\tau) \Vert_{s-1}^{2} + \Vert \partial_{x}\eta (\tau) \Vert_{s}^{2}\, d \tau,
\end{equation}
as well as 
\begin{equation}\label{Ns}
N_{s}(t)^{2}:= E_{s}(t) + F_{s}(t), 
\end{equation}
for any $t\in [0,T]$.

\subsection{A priori energy estimates}
\label{secapriori}

Next, we enunciate some a priori energy estimates for the local solution of the Cauchy problem of system \eqref{neq-rad-hyd-1d} with initial data $V_{0}(x)$ close to a constant equilibrium state $\bV=(\brho, \bu, \bthe, \bet) \in \cU_{\mathrm{eq}}$. As we have already mentioned, we invoke a general result by Kawashima \cite{KaTh83} in order to obtain such estimates. For that purpose, we verify that the linearized system around the constant equilibrium state $\bV$ satisfies the genuine coupling condition. 

We start by writing the linear system around $\bV$, which is just the one-dimensional version of the linear system \eqref{lin-sys-md} in Appendix \ref{non-gen-cou-md}, and it reads
\begin{equation}\label{lin-sys-1d}
A^{0}V_{t}+A^{1}V_{x}+LV = BV_{xx},
\end{equation}
where the constant matrix coefficients are given by
\[
A^{0}=\begin{pmatrix}
1 & 0 & 0 & 0 \\ 0 & \brho & 0 & 0 \\ 0 & 0 & \brho\, \be_{\theta} & 0 \\ 0 & 0 & 0 & 1
\end{pmatrix}, \quad L = \begin{pmatrix}
0 & 0 & 0 & 0 \\ 0 & 0 & 0 & 0 \\ 0 & 0 & 4\sigma_{a}\bthe^3 & -\sigma_{a} \\ 0 & 0 & -4\sigma_{a}\bthe^3 & \sigma_{a}
\end{pmatrix},
\]
\[
A^{1}=\begin{pmatrix}
\bu & \brho & 0  & 0 \\ \bp_{\rho} & \brho\, \bu & \bp_{\theta} & 1/3 \\ 0 & \bthe\,\bp_{\theta} & \brho\,\bu\,\be_{\theta} & 0 \\ 0 & 4\bet/3 & 0 & \bu
\end{pmatrix}, \quad B =\begin{pmatrix} 0 & 0 & 0 & 0 \\ 0 & 0 & 0 & 0 \\ 0 & 0 & 0 & 0 \\ 0 & 0 & 0 & \frac{1}{3\sigma_{s}}
\end{pmatrix}.
\]
We use the symmetrizer
\[
S= \begin{pmatrix}
\bthe\,\bp_{\rho}\bet/\brho & 0 & 0 & 0 \\ 0 & \bthe\,\bet & 0 & 0 \\ 0 & 0 & \bet & 0 \\ 0 & 0 & 0 & \bthe/4
\end{pmatrix},
\]
to get the next symmetric version of system \eqref{lin-sys-1d},
\begin{equation}\label{1d-quas-symm}
\bA^{0}V_{t}+\bA^{1}V_{x}+\bL V = \bB V_{xx}, 
\end{equation}
with the constant matrix coefficients having the form
\[
\bA^{0}:=SA^{0}= \begin{pmatrix}
\bthe\, \bp_{\rho}\,\bet/\brho & 0 & 0 & 0 \\ 0 & \brho\,\bthe\,\bet & 0 & 0 \\ 0 & 0 & \brho\,\be_{\theta}\bet & 0 \\ 0 & 0 & 0 & \bthe/4
\end{pmatrix},
\]
\[
\bL:=SL= \begin{pmatrix}
0 & 0 & 0 & 0 \\ 0 & 0 & 0 & 0 \\ 0 & 0 & 4 \sigma_{a}\, \bthe^{3}\,\bet & -\sigma_{a}\bet \\ 0 & 0 & -\sigma_{a}\, \bet & \sigma_{a}\,\bthe/4
\end{pmatrix},
\]
\[
\bA^{1}:=SA^{1}= \begin{pmatrix}
\bu\,\bthe\,\bet\,\bp_{\rho}/\brho & \bthe\,\bet\,\bp_{\rho} & 0 & 0 \\ \bthe\,\bet\,\bp_{\rho} & \brho\, \bu\,\bthe\,\bet & \bthe\,\bet\,\bp_{\theta} & \bthe\,\bet/3 \\ 0 & \bthe\,\bet\,\bp_{\theta} & \brho\,\bu\,\bet\,\be_{\theta} & 0 \\ 0 & \bthe\,\bet/3 & 0 & \bu\,\bthe/4
\end{pmatrix}, \,\,
\bB:= SB= \begin{pmatrix}
0 & 0 & 0 & 0 \\ 0 & 0 & 0 & 0 \\ 0 & 0 & 0 & 0 \\ 0 & 0 & 0 & \frac{\bthe}{12\sigma_{s}}
\end{pmatrix}.
\]
Here we have used the equilibrium relation $\bet=\bthe^{4}$; see expressions \eqref{A0-b}-\eqref{Bjk-b} in Appendix \ref{non-gen-cou-md} for more details. From the expression above for $\bB$, we clearly have that
\begin{equation*}
    \text{ker}\left( \bB \right)= \text{span} \{ (1,0,0,0),(0,1,0,0),(0,0,1,0) \}.
\end{equation*}
Regarding $\bL$ and in view that $\bet = \bthe^{4}$, one can easily verify that the third and fourth rows of $\bL$ are linearly dependent, so that $\dim \ker \left( \bL \right)=3$. Since the vectors $(1,0,0,0)$, $(0,1,0,0)$ and $(0,0,1,4\bthe^{3})$ are linearly independent and belong to $\text{ker}\left( \bL \right)$, we have that 
\begin{equation*}
    \text{ker}\left( \bL \right) = \text{span}\{ (1,0,0,0),(0,1,0,0),(0,0,1,4\bthe^{3})\}.
\end{equation*}
Thus, there holds
\[
\text{ker}\left( \bL \right)\cap \text{ker}\left( \bB \right) = \text{span}\{ (1,0,0,0),(0,1,0,0) \}, 
\]
implying that $\psi \in \text{ker}\left( \bL \right) \cap \text{ker}\left( \bB \right)$, $\psi \neq 0 $, if and only if $\psi$ is of the form $\psi=(a_{1},a_{2},0,0)$ with $a_{1}$ and $a_{2}$ not simultaneously equal to zero. Hence, for vectors $\psi$   of this form  we have that
\[
\mu \bA^{0}\psi + \bA^{1}\psi = \begin{pmatrix}
\mu a_{1}\bthe\,\bet\,\bp_{\rho}/\brho + a_{1}\bu\,\bthe\, \bet\,\bp_{\rho}/\brho +a_{2}\bthe\,\bet\,\bp_{\rho} \\ \mu a_{2}\brho\,\bthe\,\bet + a_{1}\bthe\, \bet\,\bp_{\rho} +a_{2}\brho\, \bu\, \bthe\,\bet \\ a_{2}\bthe\, \bet\, \bp_{\theta} \\ a_{2}\bthe\, \bet/3 
\end{pmatrix},\quad \mu \in \R.
\]
Let us assume that for some $\mu \in \R$, $\mu \bA^{0}\psi + \bA^{1}\psi = 0$. As $\bthe$, $\bet$, and $\bp_{\theta}>0$, from the above computations we conclude that $a_{2}=0$. Now, as $\bp_{\rho}>0$, the second column in the expression for $\mu \bA^{0}\psi + \bA^{0}\psi$ implies that $a_{1}=0$, which contradicts the fact that $\psi \neq 0$. Thus, we have proved the following result.

\begin{lemma}
Under assumptions \hyperref[H1]{\rm{(H$_1$)}} -- \hyperref[H3]{\rm{(H$_3$)}}, the one dimensional linear system \eqref{1d-quas-symm} satisfies the genuine coupling condition. 
\end{lemma}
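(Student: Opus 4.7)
The plan is to verify condition (ii) of the Equivalence Theorem \ref{Eq-Th}, which is the algebraic genuine coupling criterion. Since this criterion is invariant under left-multiplication by the positive definite symmetrizer $S$ (the equations $L\psi = 0$ and $B\psi = 0$ are the same as $SL\psi = 0$ and $SB\psi = 0$, and likewise $\mu A^0 \psi + A^1 \psi \neq 0$ iff $\mu \bA^0 \psi + \bA^1 \psi \neq 0$), it suffices to carry out the check for the symmetrized system \eqref{1d-quas-symm}. In one space dimension the unit sphere $\bbS^0$ is $\{\pm 1\}$, and by linearity the sign of $\omega$ is immaterial, so the task reduces to showing that every nonzero $\psi \in \ker \bL \cap \ker \bB$ satisfies $\mu \bA^0 \psi + \bA^1 \psi \neq 0$ for all $\mu \in \R$.

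First I would identify the two kernels directly from the block structure of the matrices. Since $\bB$ has a single nonzero entry at the $(4,4)$ position, $\ker \bB = \Span\{e_1, e_2, e_3\}$, where $e_j$ denotes the $j$-th canonical basis vector of $\R^4$. For $\bL$, I would invoke the equilibrium relation $\bet = \bthe^4$: inspection of the last two rows of $\bL$ reveals that they are proportional, so $\bL$ has rank one and $\ker \bL$ is three-dimensional, spanned by $e_1$, $e_2$ and $(0,0,1,4\bthe^3)$. Intersecting yields $\ker \bL \cap \ker \bB = \Span\{e_1, e_2\}$, so the only candidate violators are vectors of the form $\psi = (a_1, a_2, 0, 0)$ with $(a_1,a_2) \neq (0,0)$.

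For such a $\psi$, I would compute $\mu \bA^0 \psi + \bA^1 \psi$ component by component from the explicit expressions above. The fourth entry equals $a_2\, \bthe\, \bet / 3$, which by positivity of $\bthe$ and $\bet$ vanishes only when $a_2 = 0$. Substituting $a_2 = 0$, the second entry reduces to $a_1\, \bthe\, \bet\, \bp_\rho$, which by the positivity assumption $\bp_\rho > 0$ in \eqref{therm-prop} vanishes only when $a_1 = 0$. This contradicts $\psi \neq 0$, establishing genuine coupling.

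The argument is essentially a finite-dimensional linear-algebra verification, so no serious analytic obstacle is expected. The only slightly delicate observation is that $\ker \bL$ drops rank precisely because of the equilibrium relation $\bet = \bthe^4$ (it is this drop that enlarges $\ker \bL \cap \ker \bB$ beyond the first two coordinate directions, but crucially not enough to break genuine coupling), and that it is precisely the Weyl-fluid hypothesis $\bp_\rho > 0$ from \hyperref[H2]{(H$_2$)} that closes the final step. Should these positivity assumptions fail, the argument would break down exactly at the second component of $\mu \bA^0 \psi + \bA^1 \psi$.
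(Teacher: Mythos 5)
Your proof is correct and follows essentially the same route as the paper: identify $\ker\bB$ and $\ker\bL$ (using $\bet=\bthe^4$ for the latter), intersect them to reduce to $\psi=(a_1,a_2,0,0)$, and inspect the components of $\mu\bA^0\psi+\bA^1\psi$ to force $a_2=0$ and then $a_1=0$. The only cosmetic difference is that the paper kills $a_2$ via the third component (invoking $\bp_\theta>0$) whereas you use the fourth (needing only $\bthe,\bet>0$); both steps are equally valid.
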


The lemma above and the Equivalence Theorem \ref{Eq-Th} imply the existence of a compensating matrix $K$ for system \eqref{1d-quas-symm}.
Then we have the following a priori energy estimate for the solutions of the initial value problem \eqref{neq-rad-hyd-1d}-\eqref{in-dat-1d}. 

\begin{lemma}\label{lem-apr-ee-1}
Let us consider the Cauchy problem \eqref{neq-rad-hyd-1d} and \eqref{in-dat-1d}, with $V_{0}(x)$ satisfying $V_{0}-\bV\in H^{s}(\R)^4$, $s\geq 2$. Let $V(x,t)$ be a local solution on a time interval $[0,T]$, then there exists a constant $a_{0}$ such that if
\[
N_{s}(T) \leq a_{0},
\]
then there holds the a apriori energy estimate
\begin{equation}
\label{apr-ee-1}
\Vert \partial_{x}V(\tau) \Vert_{s-1}^{2} + \int_{0}^{t} \Vert \partial_{x}^{2}\eta(\tau) \Vert_{s-1}^{2}+ \Vert P^{+}\partial_{x}V(\tau) \Vert_{s-1}^{2}\, d\tau \leq C_{1} \big( \Vert \partial_{x}V_{0} \Vert_{s-1}^{2}+ N_{s}(T)^{3} \big), 
\end{equation}
for every $t\in [0,T]$, and some positive constant $C_{1}=C_{1}(a_{0})$. Here $P^{+}$ denotes the orthogonal projection onto the range of $\bL$. 
\end{lemma}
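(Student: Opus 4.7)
The plan is to run a standard Kawashima-type energy estimate in symmetric form, order by order in the number of spatial derivatives, exploiting the positivity of $\bA^0$, the semi-definiteness of $\bL$ and $\bB$, and handling the quasilinear/nonlinear corrections via the smallness assumption $N_s(T) \leq a_0$.

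First, I recast the Cauchy problem \eqref{neq-rad-hyd-1d} in quasilinear symmetric form centered at $\bV$. Writing $W = V - \bV$ and using the symmetrizer $S$ of Section \ref{secapriori}, the equation for $W$ reads
\[
\bA^0(V)\, W_t + \bA^1(V)\, W_x + \bL W = \bB W_{xx} + \cN(W, W_x),
\]
where $\bA^0(V), \bA^1(V)$ reduce to $\bA^0, \bA^1$ at $V = \bV$ and $\cN$ collects quadratic-or-higher nonlinear terms arising from the Taylor expansion of the fluxes and of the source $\sigma_a(\theta^4 - \eta)$ (exploiting $\bthe^4 = \bet$, so the zeroth-order contribution vanishes). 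By Remark \ref{cld-loc-sol-rm} the matrices $\bA^0(V), \bA^1(V), \bB$ remain bounded and $\bA^0(V) \geq c I > 0$ uniformly on the local solution.

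Next, for each multi-index $k$ with $1 \leq k \leq s$, I apply $\partial_x^k$ to the above system, take the $L^2$ inner product with $\partial_x^k W$, and integrate over $\R$. The time-derivative term produces $\tfrac{1}{2}\tfrac{d}{dt}\langle \bA^0(V) \partial_x^k W, \partial_x^k W\rangle$ plus a commutator controlled by $\|V_t\|_{L^\infty} \|\partial_x^k W\|_0^2 \leq C N_s(T) \|\partial_x W\|_{s-1}^2$. The hyperbolic term $\bA^1(V) \partial_x^{k+1} W$, after an integration by parts, yields $-\tfrac{1}{2}\langle \partial_x(\bA^1(V)) \partial_x^k W, \partial_x^k W\rangle$ plus commutators $[\partial_x^k, \bA^1(V)] W_x$ which are Moser-type products bounded by $C N_s(T) \|\partial_x W\|_{s-1}^2$. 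The relaxation term yields $\langle \bL \partial_x^k W, \partial_x^k W\rangle \geq c \|P^+ \partial_x^k W\|_0^2$ because $\bL$ is symmetric positive semi-definite with range spanned by $P^+$. The viscous term, after one integration by parts, produces $\langle \bB \partial_x^{k+1} W, \partial_x^{k+1} W\rangle = (\bthe/12\sigma_s)\|\partial_x^{k+1} \eta\|_0^2$ since only the $(4,4)$ entry of $\bB$ is nonzero. Finally the nonlinearity $\partial_x^k \cN$ is estimated by Moser/Gagliardo--Nirenberg inequalities as
\[
|\langle \partial_x^k \cN(W, W_x), \partial_x^k W\rangle| \leq C N_s(T) \big(\|\partial_x W\|_{s-1}^2 + \|\partial_x^2 \eta\|_{s-1}^2 + \|P^+ \partial_x W\|_{s-1}^2 \big),
\]
since every term in $\cN$ is either quadratic in $W$ (giving the $N_s(T)\|\partial_x W\|^2$ bound after one $L^\infty$ estimate) or contains at least one factor of $\partial_x W$ or of $P^+ W$.

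Summing the resulting differential inequality over $1 \leq k \leq s$, integrating on $[0,t]$, and choosing $a_0 > 0$ small enough to absorb the $C N_s(T)$ factors against the dissipative lower bounds, I obtain \eqref{apr-ee-1} with $C_1$ depending only on $a_0$ and the uniform bounds of Remark \ref{cld-loc-sol-rm}. The main obstacle is a careful bookkeeping of the Moser-type commutator estimates for $\partial_x^k(\bA^1(V) W_x)$ and $\partial_x^k \cN$, in particular making sure that terms like $\partial_x^k(\sigma_a(\theta^4 - \eta))$ are either linear (and thus absorbed by $P^+ \partial_x^k W$) or at least quadratic in $W$, so that everything extra is controlled by $N_s(T)^3$ after integrating in time.
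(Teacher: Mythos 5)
Your outline follows the same Kawashima-type energy estimate that the paper relies on: the paper itself does not give a self-contained proof here but simply refers to Lemma 3.1 of Kawashima's thesis \cite{KaTh83}, and your sketch is essentially a reconstruction of that argument (apply $\partial_x^k$, take $L^2$-inner products with $\partial_x^k W$, exploit $\bA^0>0$, the positive semi-definiteness of $\bL$ and $\bB$, and Moser commutators for the quasilinear/nonlinear corrections).

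There is, however, one step in your sketch that does not hold as written. You bound the time commutator by $\|V_t\|_{L^\infty}\|\partial_x^k W\|_0^2 \leq C N_s(T)\|\partial_x W\|_{s-1}^2$, \emph{pointwise in time}. If you use a single symmetric quasilinear form, the $V$-dependent symmetrizer necessarily involves $\eta$ (e.g., the constant symmetrizer in the paper has $S_{22}=\bthe\,\bet$, so its $V$-dependent version carries a factor $\theta\eta$). Then $\partial_t\bA^0(V)$ contains $\eta_t$, which from the equation contains $\eta_{xx}$, and for $s=2$ the quantity $\|\eta_{xx}\|_{L^\infty}$ is \emph{not} controlled pointwise in time by $E_s(T)$ (that would require $\eta-\bet\in H^3$, i.e.\ $s\geq 3$). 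So the commutator cannot be absorbed pointwise as you claim. Two standard fixes: (a) Hölder in time — the parabolic dissipation gives $\int_0^t\|\eta_{xx}\|_{L^\infty}^2\,d\tau\leq C\int_0^t\|\partial_x\eta\|_s^2\,d\tau\leq CF_s(t)\leq CN_s(T)^2$, and then $\int_0^t\|\eta_{xx}\|_{L^\infty}\|\partial_x^k W\|_0^2\,d\tau\leq CN_s(T)^3$ by Cauchy--Schwarz, which is exactly the allowed right-hand side; or (b) do not use a single symmetric form but the composite hyperbolic-parabolic form of the paper's Appendix~\ref{app1}, equation~\eqref{quas-hyp-par-form}, where the symmetrizer of the fluid block depends only on $(\rho,\theta)$ and the $\eta$-equation has a constant coefficient, so no $\eta_t$ (hence no $\eta_{xx}$) appears in any commutator. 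Option (b) is in fact what Kawashima's framework does and why the lemma is stated for $s\geq 2$. Apart from this, the structure of your argument — including the rank-one $\bL$ giving $c\|P^+\partial_x^kW\|_0^2$, the $\bB$-dissipation on $\partial_x^{k+1}\eta$, the smallness of $N_s(T)$ for absorption, and the cubic bound on the integrated nonlinear terms — is correct.
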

\begin{proof}
See the proof of Lemma 3.1 in \cite{KaTh83}.
\end{proof}

\begin{lemma}
\label{lem-apr-ee-2}
Under the same assumptions as in Lemma \ref{lem-apr-ee-1} and for the same $a_{0}$, there exists a constant $C_{2}=C_{2}(a_{0})$ such that as long as 
\[
N_{s}(T) \leq a_{0}
\]
is satisfied, the following energy estimate 
\begin{equation}
\label{apr-ee-2}
\begin{aligned}
\int_{0}^{t} \Vert \partial_{x}^{2}(\rho, u, \theta)(\tau) \Vert_{s-2}^{2} \, d\tau - C_{2}\big( \Vert \partial_{x}V(t) \Vert_{s-1}^{2} &+\int_{0}^{t} \Vert \partial_{x}^{2} \eta (\tau) \Vert_{s-1}^{2} + \Vert P^{+}\partial_{x}V(\tau) \Vert_{s-1}^{2} \, d\tau \big) \\ &\leq C_{2}\big( \Vert \partial_{x}V_{0} \Vert_{s-1}^{2}+ N_{s}(T)^{3}  \big)
\end{aligned}
\end{equation}
holds for $t\in [0,T]$. As in Lemma \ref{lem-apr-ee-1}, $P^{+}$ is the orthogonal projection on the range of $\bL$. 
\end{lemma}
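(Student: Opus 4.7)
The plan is to adapt the compensating matrix argument from \cite[Ch.~III]{KaTh83}, using the matrix $K$ whose existence is guaranteed by Theorem \ref{Eq-Th}(iii) and the genuine coupling just established. Estimate \eqref{apr-ee-1} of Lemma \ref{lem-apr-ee-1} already bounds $\|\partial_x V(t)\|_{s-1}^2$ pointwise in time and provides dissipation for $P^+ \partial_x V$ (the relaxation part) and for $\partial_x^2 \eta$ (the parabolic part). What Lemma \ref{lem-apr-ee-2} has to add is the \emph{missing} dissipation on $\partial_x^2(\rho,u,\theta)$, which corresponds exactly to the modes lying in $\ker \bL \cap \ker \bB$ of the quasi-symmetric linearization.

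The first step is to rewrite the nonlinear system \eqref{neq-rad-hyd-1d} in quasilinear symmetric form
\[
\widetilde{\bA}^0(V) V_t + \widetilde{\bA}^1(V) V_x + \bL V = \bB V_{xx} + H(V,\partial_x V),
\]
obtained by multiplying by a $V$-dependent symmetrizer $S(V)$ whose linearization at $\bV$ agrees with the constant matrix $S$ of Section \ref{secapriori}. Here $H$ collects residuals that vanish quadratically at $V=\bV$ (arising both from expanding the zero-order coupling $\sigma_a(\theta^4-\eta)$ about equilibrium and from the quasilinear fluxes). I would then apply $\partial_x^k$ to this equation for each $1\leq k\leq s-1$, producing analogous equations for $\partial_x^k V$ modulo Leibniz commutators.

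Next, following the Fourier construction of \cite{KaTh83}, I would introduce for each $k$ the auxiliary functional
\[
\cE_k(t) := \Re \int_{\R} \Big\langle \tfrac{-i\xi}{1+\xi^2}\, K\,\widetilde{\bA}^0(\bV) \widehat{\partial_x^k V}, \widehat{\partial_x^k V} \Big\rangle \, d\xi,
\]
which is real and bounded by $\|\partial_x^k V\|_0^2$ thanks to property (a) (skew-symmetry of $K\widetilde{\bA}^0(\bV)$). Differentiating in time and substituting the Fourier-transformed equation, property (a) cancels the term containing $\widetilde{\bA}^0 V_t$, and property (b) ($[K\widetilde{\bA}^1(\bV)]^s + \bB + \bL > 0$) produces a leading contribution of size $-c\int \xi^2/(1+\xi^2)|\widehat{\partial_x^k V}|^2 d\xi$, which, combined with the already available dissipation coming from \eqref{apr-ee-1}, upgrades to a genuine $-c\|\partial_x^{k+1}V\|_0^2$. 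The remaining contributions are of three types: a boundary piece $|\cE_k(t)|+|\cE_k(0)|\leq C(\|\partial_x V(t)\|_{s-1}^2+\|\partial_x V_0\|_{s-1}^2)$; linear cross terms from $\bL$ and $\bB$ absorbed with small weight by $\|P^+\partial_x V\|_{s-1}^2$ and $\|\partial_x^2\eta\|_{s-1}^2$ via Cauchy--Schwarz; and nonlinear residuals from $H$ and from the Leibniz commutators, bounded by $C N_s(T)^3$ using Moser-type product estimates, the embedding $H^{s-1}(\R)\hookrightarrow L^\infty(\R)$ (valid since $s\geq 2$), and the smallness $N_s(T)\leq a_0$. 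Summing over $1\leq k\leq s-1$, integrating in time, and reorganizing gives exactly \eqref{apr-ee-2}, noting that the $\eta$-component of $\|\partial_x^{k+1}V\|_0^2$ is harmless since it is already controlled by $\int_0^t\|\partial_x^2\eta\|_{s-1}^2 d\tau$.

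The main obstacle I anticipate is keeping the nonlinear error \emph{cubic} rather than merely quadratic. When $\partial_x^k$ is distributed across the quasilinear matrices $\widetilde{\bA}^j(V)$ via Leibniz, each derivative falling on a coefficient must be paired with a factor of $(V-\bV)$ and the resulting trilinear forms estimated in the standard $L^\infty$--$L^2$--$L^2$ fashion, which is delicate when one of the derivatives lands on the top-order term. A secondary, milder point is that in one space dimension $\bbS^0=\{\pm 1\}$, so $K(\omega)$ amounts to a constant matrix up to a sign and the symbol $i\xi/(1+\xi^2)$ is a bounded $L^2$ multiplier; this raises no functional-analytic difficulty, but one must be careful when translating between the Fourier and physical-space versions of $\cE_k$ in order to identify the physical-space dissipation that produces $\int_0^t\|\partial_x^2(\rho,u,\theta)\|_{s-2}^2 d\tau$ on the left-hand side of \eqref{apr-ee-2}.
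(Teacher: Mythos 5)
The paper's ``proof'' of Lemma~\ref{lem-apr-ee-2} is a bare citation to Lemma~3.2 in \cite{KaTh83}, so the reference object is Kawashima's compensating-matrix argument, and your proposal is aimed in exactly the right direction: use the matrix $K$ whose existence follows from genuine coupling to produce the missing dissipation on $\partial_x^2(\rho,u,\theta)$. However, the specific functional you write down has a genuine problem. The symbol $\tfrac{-i\xi}{1+\xi^2}$ is the \emph{regularized} multiplier Kawashima uses for the pointwise Fourier estimate (his Lemma~3.A.1, i.e.\ the paper's \eqref{pw-ee-Four}), precisely because it is uniformly bounded in $\xi$ and yields the decay rate $\exp\big(-c\xi^2 t/(1+\xi^2)\big)$. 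With your $\cE_k$, the dissipation produced after using property (b) of $K$ is
\[
-c\int_{\R}\frac{\xi^2}{1+\xi^2}\,\big|\widehat{\partial_x^k V}\big|^2\,d\xi,
\]
and this is \emph{not} equivalent to $-c\|\partial_x^{k+1}V\|_0^2$: at high frequency $\frac{\xi^2}{1+\xi^2}\xi^{2k}\sim\xi^{2k}$, so the weighted quantity is a full derivative short of $\xi^{2(k+1)}$. Your claim that combining with \eqref{apr-ee-1} ``upgrades'' this to $-c\|\partial_x^{k+1}V\|_0^2$ does not hold, because \eqref{apr-ee-1} only dissipates $\partial_x^2\eta$ and $P^+\partial_x V$; it gives no control over the high-frequency part of $\partial_x^{k+1}(\rho,u,\theta)$, which is exactly what the multiplier loses.

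The fix, and what Kawashima's Lemma~3.2 actually does, is to use the unregularized multiplier $-i\xi$ (equivalently the physical-space functional $\cE_k=\Re\int_{\R} K\,\widetilde{\bA}^0(\bV)\,\partial_x^{k+1}V\cdot\partial_x^k V\,dx$), which yields the full dissipation $-c\|\partial_x^{k+1}V\|_0^2$ directly from property (b). The price is that $\cE_k$ is then only bounded by $\|\partial_x^{k+1}V\|_0\,\|\partial_x^k V\|_0$ rather than $\|\partial_x^k V\|_0^2$, but for $1\leq k\leq s-1$ this is still dominated by $\|\partial_x V\|_{s-1}^2$, which is controlled by Lemma~\ref{lem-apr-ee-1}, so the boundary terms are absorbed as you intended. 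With that substitution, the rest of your plan (skew-symmetry of $K\widetilde{\bA}^0$ cancelling the time-derivative term, absorbing $\bL$- and $\bB$-contributions with small weight into the already-available dissipation, Moser estimates plus $H^{s-1}\hookrightarrow L^\infty$ to keep the nonlinear commutator and source errors cubic, and summing over $1\leq k\leq s-1$ to produce $\int_0^t\|\partial_x^2(\rho,u,\theta)\|_{s-2}^2\,d\tau$) is the standard and correct execution of Kawashima's argument.
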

\begin{proof}
See the proof of Lemma 3.2 in \cite{KaTh83}.
\end{proof}
\begin{remark}
Lemmata \ref{lem-apr-ee-1} and \ref{lem-apr-ee-2} are essentially restatements of Lemmata 3.1 and 3.2 in \cite{KaTh83}, respectively. In this subsection we have simply verified that system \eqref{neq-rad-hyd-1d} satisfies the underlying hypotheses. 
\end{remark}

\begin{remark}
Notice that estimates \eqref{apr-ee-1} and \eqref{apr-ee-2} are not sufficient to get the global existence of solutions. For instance, if we multiply estimate \eqref{apr-ee-2} by $\alpha > 0$ and add it to estimate \eqref{apr-ee-1}, where $\alpha$ is such that $\alpha C_{2}<1$, we arrive at
\[
\begin{aligned}
\Vert \partial_{x}V(t) \Vert_{s-1}^{2}+& \int_{0}^{t}\Vert \partial_{x}^{2}(\rho, u, \theta)(\tau)\Vert_{s-2}^{2}+\Vert \partial_{x}^{2}\eta(\tau) \Vert_{s-1}^{2}\, d\tau \leq C \big( \Vert \partial_{x}V_{0} \Vert_{s-1}^{2}+N_{s}(T)^{3} \big),
\end{aligned}
\]
for some $C > 0$. However, in the left-hand side of the inequality above we are missing the terms $\Vert (V-\bV)(t) \Vert_{0}^{2}$ and $\int_{0}^{t}\Vert \partial_{x}V(\tau) \Vert_{0}^{2}\, d\tau$ to obtain an energy estimate as the one given by Theorem \ref{loc-ex-th}. 
\end{remark}

In the next section we perform a change of perturbation variables which will allow us to close the a priori energy estimates. The latter will eventually lead to the decay rate in time of solutions, which is a fundamental information not contained in the statements of Lemmata \ref{lem-apr-ee-1} and \ref{lem-apr-ee-2}. 

\section{Entropy and symmetrization}
\label{subsec:yong}

In this section we perform a change of variables to recast system \eqref{neq-rad-hyd}. The system in the new variables has some features that allow us to obtain a sharp linear energy estimate, which is needed to close the nonlinear analysis. This change of variables is motivated by the notion of an entropy function for a viscous system of conservation laws \cite{KaTh83,KaSh88a} and that of for a system of balance laws \cite{KY04,KY09}. These definitions are extensions of the classical notion of an entropy function for hyperbolic systems of conservation laws first introduced by Godunov \cite{Godu61b} and by Friedrichs and Lax \cite{FLa2}. 

\subsection{Entropies for non-conservative viscous balance systems}

Let us start by recalling the case of hyperbolic balance laws. Consider a system of the form
\begin{equation}\label{1}
W_{t} + f^{1}(W)_{x}=  Q(W).
\end{equation}
Here $x\in \R$ and $t\geq 0$ denote, respectively, the space and time variables. $W=(W_{1},\ldots, W_{n})$ represents the vector of quantities under examination  taking values in an open convex set $\mathcal{O}_{W}\subset \R^{n}$. The flux function $f^{1}$ and $Q$ are smooth functions of $W$ with values in $\R^{n}$. 

Let us define the sets
\[
\mathcal{M}:= \left\lbrace \psi \in \R^{n}: \: \< \psi, Q(W) \>=0, \;\, \text{for all} \; W\in \Or_{W} \right\rbrace
\]
and
\[
\Or_{\mathrm{eq}} = \left\lbrace W\in \Or_{W}: \: Q(W)=0 \right\rbrace.
\]
We have the following definitions.
\begin{definition}
\label{entr-rel-vis}
We say that system \eqref{1} has an entropy function if there exists a function $\cS:\Or_{W} \longrightarrow \R$ satisfying the following properties:
\begin{itemize}
\item[(a)]$\cS$ is a strictly convex function of $W$, that it, its Hessian $D_{W}^{2}\cS(W)$ is positive definite. 
\item[(b)]For $W\in \Or_{W}$ the matrix
\begin{equation}\label{ent-flux}
D_{W}f^{1}(W)D_{W}^{2}\cS (W)^{-1}
\end{equation}
is symmetric. 
\item[(c)]Let $W\in \Or_{W}$, then $W\in \Or_{\mathrm{eq}}$ if and only if $D_{W}\cS(W)^{\top}\in \mathcal{M}$. 
\item[(d)]For $W\in \Or_{\mathrm{eq}}$, the matrix $D_{W}Q(W)D_{W}^{2}\cS(W)^{-1}$ is symmetric and non-positive definite, and its null space coincides with $\mathcal{M}$. 
\end{itemize}
\end{definition}

One consequence of the existence of an entropy for system \eqref{1} is that we can write it in symmetric form. To this end, let $\cS$ be an entropy of system \eqref{1} in the sense just defined above. Therefore, the mapping
\begin{equation}\label{ent.}
U(W):= D_{W}\cS(W)^{\top}
\end{equation}
is a diffeomorphism from $\Or_{W}$ onto $\Or_{U}=U(\Or_{W})$ (this is a consequence of property (a)). Thus if we use $W=W(U)$ in \eqref{1} we obtain
\begin{equation}\label{2}
\tiA^{0}(U)U_{t}+ \tiA^{1}(U)U_{x}=  h(U),
\end{equation}
where
\begin{equation}\label{tiA0-A1}
\begin{aligned}
\tiA^{0}(U)&:= D_{U}W(U), \\
\tiA^{1}(U)&:=D_{U}f^{1}(U) = D_{W}f^{1}(W(U))D_{U}W(U), 
\end{aligned}
\end{equation}
and 
\[
h(U):= Q(W(U)). 
\]
Since $D_{U}W(U) = D_{W}^{2}\cS(W(U))^{-1}$ because of \eqref{ent.}, we have that
\[
\begin{aligned}
\tiA^{0}(U)&= D_{W}^{2}\cS(W(U))^{-1}, \\
\tiA^{1}(U)&= D_{W}f^{1}(W(U)) D_{W}^{2}\cS(W(U))^{-1},
\end{aligned}
\]
which are symmetric matrices with $\tiA^{0}>0$. 

The radiation hydrodynamics system under consideration is a set of viscous balance laws with non-conservative terms. Hence, let us consider a generic system of the form
\begin{equation}
\label{vis-bl-nc}
W_{t}+f^{1}(W)_{x}=(G(W)W_{x})_{x} + C(W)W_{x} + Q(W),
\end{equation}
where $G$ and $C$ are smooth function of $W$ with values in $\R^{n\times n}$. Consequently, we modify Definition \ref{entr-rel-vis} to account for non-conservative viscous balance systems having this form. In particular, we replace condition (b) by 
\begin{equation}\label{ii'}
\text{(b') For $W\in \Or_{W}$, the matrix $\left(D_{W}f^{1}(W)-C(W) \right)D_{W}^{2}\cS(W)^{-1}$ is symmetric},
\end{equation}
and add a new condition for the viscous term:
\begin{equation}
\text{(e) For $W\in \Or_{W}$, $G(W)D_{W}^{2}\cS(W)^{-1}$ is symmetric and positive semi-definite}.    
\end{equation}

\begin{definition}
\label{def-vis-bl-nc}
We say that system \eqref{vis-bl-nc} has an entropy function if there exists a function $\cS:\Or_{W} \longrightarrow \R$ satisfying the conditions (a), (b'), (c), (d) and (e) above.
\end{definition}

\subsection{Entropy for non-equilibrium radiation hydrodynamics}

Now, we shall verify that the entropy function proposed by Buet and Despr\'es \cite{BuDe04} ``almost'' satisfies Definition \ref{def-vis-bl-nc} for the one-dimensional version of system \eqref{neq-rad-hyd}. Indeed, regarding condition (b'), it will hold only for $W\in \Or_{\mathrm{eq}}$. Although this may seem as a difficulty, it turns out that it is sufficient for our purposes, as we shall see in the sequel. Let us start by writing the system under consideration as
\begin{equation}
\label{rad-hyd-1d}
\begin{aligned}
\partial_{t}\rho + \partial_{x}(\rho u)&=0, \\
\partial_{t}(\rho u) + \partial_{x} \big( \rho u^{2} + \big( p +\tfrac{1}{3}\eta\big) \big)&=0, \\ 
\partial_{t}\big( \rho E \big) + \partial_{x} \big( \rho E u + \big(p +\tfrac{1}{3}\eta \big)u \big)&= -\sigma_{a}\big( \theta^{4} - \eta \big)+\tfrac{1}{3}\eta u_{x}, \\
\partial_{t}\eta + \partial_{x}(\eta u )  &= \tfrac{1}{3\sigma_{s}}\eta_{xx} + \sigma_{a} (\theta^{4} -\eta)- \tfrac{1}{3}\eta  u_{x},
\end{aligned}
\end{equation}
which is the one dimensional version of system \eqref{neq-rad-hyd-var}. Here the variables describing the fluid are the mass density $\rho$, the scalar velocity $u$, the pressure $p$, the total energy $E$ and the absolute temperature $\theta$, while $\eta$ is the radiation intensity. The absorption coefficient $\sigma_{a}$ and the scattering coefficient $\sigma_{s}$ are, once again, positive constants.  

Clearly, this system is written in the form \eqref{vis-bl-nc} for the conserved quantities $W  : = (\rho,m,\cE,\eta)=(\rho, \rho u, \rho E, \eta)$ and with the notations
\begin{equation}\label{coef-W}
\begin{aligned}
f^{1}(W) & = \begin{pmatrix} \rho u \\ \rho u^{2} + p + \tfrac{1}{3}\eta \\ \rho E u + (p + \tfrac{1}{3}\eta)u \\ \eta u \end{pmatrix},\\
G(W) & = \begin{pmatrix}
0 & 0 & 0 & 0 \\ 0 & 0 & 0 & 0 \\ 0 & 0 & 0 & 0 \\ 0 & 0 & 0 & 1/(3\sigma_{s})
\end{pmatrix} =: \overline{G},\\
Q(W) & = \begin{pmatrix}
0 \\ 0 \\ \sigma_{a}\big( \eta - \theta^{4} \big) \\ -\sigma_{a}\big( \eta - \theta^{4} \big)\\
\end{pmatrix},\\
C(W)& = \begin{pmatrix}
0 & 0 & 0 & 0 \\ 0 & 0 & 0 & 0 \\ -u\eta/(3\rho) & \eta/(3\rho) & 0 & 0 \\ u\eta/(3\rho) & -\eta/(3\rho) & 0 & 0
\end{pmatrix}.
\end{aligned}
\end{equation}
Notice that $\overline{G}$ is a constant matrix. In the definitions above, we understand $u=m/\rho$, while $p$ and  $\theta$ are functions of $W$. This is indeed the case if we choose $\rho$ and $e$ as thermodynamic variables (see Remark \ref{thrm-rmrk}) so that we have
\[
p=p(\rho, e),\quad \theta=\theta(\rho,e ),
\]
and as 
\begin{equation}\label{eq:energy}
\cE= \rho E = \rho(e + u^{2}/2),
\end{equation}
we can write $e =e (\rho, m, \cE)$, implying that
\begin{equation}\label{p,the(W)}
p=p(\rho, m, \cE),\quad \theta=\theta(\rho, m , \cE).  
\end{equation}
The \emph{non conservative term} $C(W)W_x$ will play a crucial role in the sequel. It clearly comes from the differentiation
\begin{equation*}
u_{x}=\left( \frac{m}{\rho} \right )_{x} =-\frac{m}{\rho^{2}}\rho_x+ \frac{1}{\rho}m_{x} = -\frac{u}{\rho}\rho_{x} + \frac{1}{\rho}m_{x}.
\end{equation*}
Finally, the sets $\mathcal{M}$ and $\Or_{\mathrm{eq}}$ are given, respectively, by
\begin{equation}\label{Mcal}
\mathcal{M}= \mbox{span}\left\lbrace (1,0,0,0),(0,1,0,0),(0,0,1,1) \right\rbrace,
\end{equation}
and 
\begin{equation}\label{Eq-sts}
\Or_{\mathrm{eq}}=\left\lbrace W\in \Or_{W}:\: \eta= \theta^{4}(W) \right\rbrace. 
\end{equation}

As stated before, the variables we use to symmetrize the system are connected with the entropy functions of the model. In our case we consider the entropy function proposed by Buet and Despr\'es (see Corollary 2 in \cite{BuDe04}), namely
\begin{equation}
\label{form-Ent.}
\cS = -\rho s- \frac{4}{3}\eta^{3/4}.
\end{equation}
The quantity $\frac{4}{3}\eta^{3/4}$ is formally the radiative (physical) entropy at equilibrium. The function $s$ appearing in  \eqref{form-Ent.}  denotes the specific entropy (per unit mass) of the fluid and, as in the case of $p$ and $\theta$ in \eqref{p,the(W)}, it is understood as a function of $W$ which satisfies
\begin{equation}\label{therm-rel-2}
s_{\rho}=-p_{\theta}/\rho^{2},\qquad s_{\theta}=e_{\theta}/\theta. 
\end{equation}
It turns out that the function $\cS$ defined above works as a (mathematical) entropy for our system in the sense of Definition \ref{def-vis-bl-nc}, where, as already pointed out, condition (b') will hold only at equilibrium. We claim, for instance, that the needed symmetrizer is given by the Hessian of the entropy function, $D_{W}^{2}\cS(W)$. 
\begin{remark}\label{rem:varV}
Given that the pressure $p$, the internal energy $e$ and the 
 specific entropy $s$ are related through the thermodynamic relations \eqref{therm-rel} when $\rho$ and $\theta$ are chosen as independent thermodynamic variables, it is more convenient to perform our computations by introducing the (non conserved) variables $V : = (\rho, u, \theta, \eta)$. Then, the expression \eqref{eq:energy} defines a diffeomorphism $W=W(V)$ and the differentiation with respect to $W$ is performed using the chain rule involving  the matrix $\left . D_WV(W) = D_{V}W(V)^{-1}\right |_{V=V(W)}$, where
\[
D_{V}W(V)= \begin{pmatrix}
1 & 0 &0 & 0 \\ u & \rho & 0 & 0 \\ e+u^{2}/2 +\rho e_{\rho} & \rho u & \rho e_{\theta} & 0 \\ 0 & 0 & 0 & 1
\end{pmatrix},
\]
which is clearly non-singular. A direct computation then yields
\[
 D_{V}W(V)^{-1}= \begin{pmatrix}
 1 & 0 & 0 & 0 \\
 -u/\rho & 1/\rho &0 & 0 \\
  (\tfrac{1}{2}u^{2} -e-\rho\,e_{\rho})/(\rho e_{\theta}) & -u/(\rho e_\theta) & 1/(\rho e_\theta) & 0 \\
  0 & 0 & 0 & 1 \\
\end{pmatrix}.
\]
Finally, we obtain
\[
D_{V}\cS(W(V))= \big(-s-\rho s_{\rho}, 0, -\rho s_{\theta}, -1/\eta^{1/4} \big),
\]
and 
\[
D_{V}f^{1}(V)= \begin{pmatrix}
u & \rho & 0 & 0 \\ u^{2} + p_{\rho} & 2\rho u & p_{\theta} & 1/3 \\ u \big(e +\tfrac{1}{2}u^{2} \big)+ \rho u e_{\rho}+ p_{\rho}u & \rho\big( e +\tfrac{1}{2}u^{2}\big) + \rho u^{2} + p + \eta/3 & \rho u e_{\theta} + p_{\theta}u & u/3 \\ 0 & \eta & 0 & u
\end{pmatrix}.
\]
\end{remark}
In view of the above remark, by the chain rule we have that
\[
D_{W}\cS(W) = D_{V}\cS(V(W))D_{W}V(W) = D_{V}\cS(V(W))D_{V}W(V)^{-1}|_{V=V(W)}.
\]
Thus, after some straightforward computations and using relations \eqref{therm-rel} we obtain
\begin{equation}
\label{Z(V)}
\Theta(V):= U(W(V)) =D_{W}\cS(W(V))^{\top}= \begin{pmatrix}
-s+\big( e -u^{2}/2 +p/\rho \big)/\theta \\ u/\theta \\ -1/\theta \\ -1/\eta^{1/4}
\end{pmatrix},
\end{equation}
where $U$ is as in \eqref{ent.}. From this expression it is easy to see that for $W\in \Or_{W}$, $W\in \Or_{\mathrm{eq}}$ if and only if $D_{W}\cS(W)^{\top} \in \mathcal{M}$. This is true because, if $W\in \Or_{\mathrm{eq}}$, then $\eta= \theta^{4}$, 
which means that the third and fourth entries of $D_{W}\cS(W)^{\top}$ are the same. 

Next, we are going to show that $D_{W}^{2}\cS(W)$ is positive definite. Once again, we verify this fact indirectly, using that $W=W(V)$. The chain rule implies that
\begin{equation}
\label{*}
\begin{aligned}
D_{W}^{2}\cS(W) = D_{W}U(W) &= D_{V}\Theta(V(W))D_{W}V(W) \\ &= D_{V}\Theta(V(W))D_{V}W(V)^{-1} |_{V=V(W)}.
\end{aligned}
\end{equation}
Using relations \eqref{therm-rel} it is easy to see that
\[
D_{V}\Theta(V(W)) = \begin{pmatrix}
p_{\rho}/\rho \theta & -u/\theta & -(e-\tfrac{1}{2}u^{2} + \rho e_{\rho})/\theta^{2} & 0 \\ 0 & 1/\theta & -u/\theta^{2} & 0 \\ 0 & 0 & 1/\theta^{2} & 0 \\ 0 & 0 & 0 & 1/\big( 4\eta^{5/4} \big)
\end{pmatrix}.
\] 
Now, let us define the matrix 
\[
A(W):=D_{V}W(V)^{\top}|_{V=V(W)} D_{W}^{2}\cS(W) D_{V}W(V)|_{V=V(W)}. 
\]
Thus using \eqref{*} and after some straightforward computations we get
\[
A(W)= D_{V}W(V)^{\top}|_{V=V(W)} D_{V}\Theta(V(W)) = \begin{pmatrix}
p_{\rho}/\theta \rho & 0 & 0 & 0 \\ 0 & \rho/\theta & 0 & 0 \\ 0 & 0 & \rho e_{\theta}/\theta^{2} & 0 \\ 0 & 0 & 0 & 1/\big( 4\eta^{5/4}\big) 
\end{pmatrix},
\]
which is positive definite, and so is the matrix $D_{W}^{2}\cS(W)$. By \eqref{*} we obtain
\begin{equation}\label{coeffA0}
\begin{aligned}
\tiA^{0}(U) &= D_{W}^{2}\cS(W)^{-1}|_{W=W(U)} \\ &= \left( D_{V}W(V)|_{V=V(W)} D_{V}\Theta(V(W))^{-1} \right) |_{W=W(U)} \\ &= 
\begin{pmatrix}
\rho\, \theta / p_{\rho} & \rho\, u\, \theta / p_{\rho}  & a & 0 \\ \rho \, u\, \theta / p_{\rho} & \rho\, \theta  (1+u^{2}/p_{\rho}) & b & 0 \\ a & b & c & 0 \\ 0 & 0 & 0 & 4\eta^{5/4}
\end{pmatrix},
\end{aligned}
\end{equation}
where $U$ is defined by \eqref{ent.}, and the coefficients $a$, $b$ and $c$ are given by
\[
\begin{aligned}
a &=\frac{\theta \big( e_{\rho}\rho^{2} + \rho(e + u^{2}/2) \big)}{p_{\rho}},\\
b &=\frac{u\, \theta \big( e_{\rho}\rho^{2} + \rho (e + u^{2}/2) + \rho\, p_{\rho}  \big)}{p_{\rho}},\\
c &= \frac{\rho\, \theta \big( \big( (e+ u^{2}/2) + e_{\rho}\rho \big)^{2} +  (u^{2}+\theta\, e_{\theta})p_{\rho} \big)}{ p_{\rho} }.
\end{aligned}
\]
Now, use the chain rule to write
\[
D_{W}f^{1}(W)=D_{V}f^{1}(V(W)) D_{W}V(W) = D_{V}f^{1}(V(W))D_{V}W(V)^{-1} |_{V=V(W)},
\]
so that 
\[
D_{W}f^{1}(W)D_{W}^{2}\cS(W)^{-1} = D_{V}f^{1}(V(W))D_{V}\Theta(V(W))^{-1}.
\]
Thus
\begin{equation}\label{redef-A1}
\begin{aligned}
\tiA^{1}(U)&:=\left(D_{W}f^{1}(W)-C(W)  \right)D_{W}^{2}\cS(W)^{-1} |_{W=W(U)} \\ 
& =\left( D_{V}f^{1}(V(W))D_{V}\Theta(V(W))^{-1} - C(W)D_{V}W(V)|_{V=V(W)}D_{V}\Theta(V(W))^{-1}\right) |_{W=W(U)} \\
&= \left[ D_{V}f^{1}(V(W))-  C(W)D_{V}W(V)|_{V=V(W)} \right] D_{V}\Theta(V(W))^{-1} |_{W=W(U)},
\end{aligned}
\end{equation}
and after some computations we get
\begin{equation}\label{coeffA1}
\tiA^{1}(U)= \begin{pmatrix}
a_{11} & a_{12} & a_{13} & 0 \\ a_{12} & a_{22} & a_{23} & a_{24} \\ a_{13} & a_{23} & a_{33} & a_{34} \\ 0 & a_{42} & a_{43} & a_{44}
\end{pmatrix}
\end{equation}
where the coefficients are given by
\[
\begin{aligned}
a_{11} &=\rho\, u\, \theta/p_{\rho},\\
a_{12} &= \rho\, \theta\big( p_{\rho}  + u^{2} \big)/ p_{\rho},\\
a_{13} &= u \, \theta\big( e_{\rho}\rho^{2} + \rho \big( e + u^{2}/2 \big) + \rho \, p_{\rho} \big)/p_{\rho},\\
a_{22} &= \rho\, u \, \theta \big(3 p_{\rho} + u^{2}\big)/p_{\rho},\\
a_{23} &=\theta\big( \rho\,u^{2}\big(e+ u^{2}/2 \big)+ u^{2}e_{\rho}\rho^{2} + \big(  p + \rho\, e + 5\rho\, u^{2}/2 \big)p_{\rho} \big)/p_{\rho},\\
a_{24} &=4\eta^{5/4}/3,\\
a_{33} &= u\, \theta \big( ( \rho^{2}e_{\rho}+\rho(e + u^{2}/2))^{2}+\rho (2(p+(e+u^{2})\rho)+\rho\, \theta\, e_{\theta})p_{\rho} \big)/(\rho\, p_{\rho}), \\
a_{34} &=4u\,\eta^{5/4}/3,\\
a_{42} &=4\theta\, \eta/3,\\
a_{43} &=4u\, \theta \, \eta/3, \\
a_{44} &=4 u \, \eta^{5/4}. 
\end{aligned}
\]
From the expression for the coefficients $a_{24}$, $a_{34}$, $a_{42}$, and $a_{43}$ is easy to see that $A^{1}(U)$ is symmetric as long as $\eta = \theta^{4}$, that is, for $W\in \Or_{\mathrm{eq}}$. This shows that condition (b') is satisfied for $W\in \Or_{\mathrm{eq}}$. Observe that in \eqref{redef-A1} we have redefined the expression for $\tiA^{1}(U)$ in \eqref{tiA0-A1} in order to account for the non-conservative term that appears in \eqref{vis-bl-nc}. To compute $D_{W}Q(W)D_{W}^{2}\cS(W)^{-1}$ we proceed in the same fashion as we did for the term $D_{W}f^{1}(W)D_{W}^{2}\cS(W)^{-1}$. This yields
\[
\begin{aligned}
D_{W}Q(W)D_{W}^{2}\cS(W)^{-1} &= D_{V}Q(V(W))D_{V}\Theta(V(W))^{-1} \\
&= \begin{pmatrix}
0 & 0 & 0 & 0 \\ 0 & 0 & 0 & 0 \\ 0 & 0 & -4\sigma_{a}\theta^{5} & 4\sigma_{a}\eta^{5/4} \\ 0 & 0 & 4\sigma_{a}\theta^{5} & -4\sigma_{a}\eta^{5/4} 
\end{pmatrix},
\end{aligned}
\]
which is non-positive definite for $W\in \Or_{\mathrm{eq}}$ and its null space coincides with $\mathcal{M}$ given by \eqref{Mcal} for $W\in \Or_{\mathrm{eq}}$. 

Finally, let us compute $G(W)D_{W}^{2}\cS(W)$. For that purpose we use the relation for $D_{W}^{2}\cS(W)$ given by \eqref{*} and obtain
\[
\begin{aligned}
G(W)D_{W}^{2}\cS(W)^{-1} &= \overline{G}D_{V}W(V)|_{V=V(W)} D_{V}\Theta(V(W))^{-1}  \\
&= \begin{pmatrix}
0 & 0 & 0 & 0 \\ 0 & 0 & 0 & 0 \\  0 & 0 & 0 & 0 \\ 0 & 0 & 0 & 4\eta^{5/4}/(3\sigma_{s})
\end{pmatrix},
\end{aligned}
\]
which is clearly positive semi-definite. Thus, we have shown that $\cS$ given by \eqref{form-Ent.} is indeed an entropy function according to the modified Definition \ref{def-vis-bl-nc} for system \eqref{rad-hyd-1d}. 

\subsection{New perturbation variables}
\label{secnpv}
Next, let us consider a constant state $\bW \in \Or_{\mathrm{eq}}$ and define
\begin{equation}
\label{def-z}
Z := D_{U}W(\bU)^{-1}(W-\bW),
\end{equation}
where $\bU = U(\bW)$. Thus $W-\bW = D_{U}W(\bU)Z$. Now observe that
\[
\begin{aligned}
f^{1}(W)_{x} &= D_{W}f^{1}(\bW)(W-\bW)_{x} + \left(f^{1}(W)-f^{1}(\bW)-D_{W}f^{1}(\bW)(W-\bW) \right)_{x} \\ &=  D_{W}f^{1}(\bW)D_{U}W(\bU)Z_{x} + \left(f^{1}(W)-f^{1}(\bW)-D_{W}f^{1}(\bW)(W-\bW) \right)_{x},\\
\big(G(W)W_{x}\big)_{x} &= \overline{G} W_{xx} = \overline{G} D_{U}W(\bU)Z_{xx},\\
C(W)W_{x}&= C(\bW)W_{x} + \big( C(W) - C(\bW)\big) W_{x}  \\ &= C(\bW)D_{U}W(\bU)Z_{x} + \big( C(W) - C(\bW)\big) W_{x},
\end{aligned}
\]
and
\[
\begin{aligned}
Q(W)& = D_{W}Q(\bW)(W-\bW) + \left( Q(W)-Q(\bW)-D_{W}Q(\bW)(W-\bW) \right) \\ &= D_{W}Q(\bW)D_{U}W(\bW)Z + \left( Q(W)-Q(\bW)-D_{W}Q(\bW)(W-\bW) \right). 
\end{aligned}
\]
Then system \eqref{rad-hyd-1d} transforms into 
\begin{equation}\label{sys-z}
\tiA^{0}Z_{t} + \tiA^{1}Z_{x}+ \tiL Z = \tiB Z_{xx} + g_{x}+ q,
\end{equation}
where
\[
\begin{aligned}
\tiA^{0}&=\tiA^{0}(\bU), \\  
\tiA^{1}&=\tiA^{1}(\bU), \\  
\tiL :&=-D_{W}Q(\bW)D_{W}^{2}\cS(\bW)^{-1},  \\ \tiB:&=G(\bW)D_{W}^{2}\cS(\bW)^{-1},
\end{aligned}
\]
with $\tiL$ and $\tiB$ positive semi-definite, while $g$ and $q$ are given by
\begin{equation}\label{pq-form}
\begin{aligned}
g&= - \left(f^{1}(W)-f^{1}(\bW)-D_{W}f^{1}(\bW)(W-\bW) \right), \\ 
q&= \big( C(W) - C(\bW)\big) W_{x} +  \left( Q(W)-Q(\bW)-D_{W}Q(\bW)(W-\bW) \right). 
\end{aligned}
\end{equation}
We can rewrite $g$ and $q$ above by noticing that
\[
\big( C(W)- C(\bW) \big)W_{x} = \begin{pmatrix}
0 \\ 0 \\ \frac{1}{3}(\eta - \bet )u_{x} \\ -\frac{1}{3}(\eta - \bet )u_{x} \end{pmatrix} = \begin{pmatrix}
0 \\ 0 \\ \frac{1}{3}(\eta - \bet )(u - \bu) \\ -\frac{1}{3}(\eta - \bet )(u - \bu)
\end{pmatrix}_{x} + \begin{pmatrix}
0 \\ 0 \\ -\frac{1}{3}(u-\bu)\eta_{x}  \\ \frac{1}{3}(u-\bu)\eta_{x}
\end{pmatrix},
\]
so that 
\[
g_{x} + q = \widetilde{g}_{x}+ \widetilde{q},
\]
where
\begin{equation}
\label{tipq-form}
\begin{aligned}
\widetilde{g} &= g + \begin{pmatrix} 0 \\ 0 \\ \frac{1}{3}(\eta -\bet)(u -\bu) \\ -\tfrac{1}{3}(\eta -\bet)(u -\bu) \end{pmatrix}, \\
\widetilde{q}&= \begin{pmatrix}
0 \\ 0 \\ -\frac{1}{3}(u -\bu) \eta_{x} \\ \frac{1}{3}(u- \bu)\eta_{x}
\end{pmatrix} + \left( Q(W)-Q(\bW)-D_{W}Q(\bW)(W-\bW) \right).
\end{aligned}
\end{equation}
Essentially, in order to obtain system \eqref{sys-z} one multiplies system \eqref{vis-bl-nc} on the left by the Hessian of the entropy function evaluated at $\bU$. 

\begin{remark}\label{rem:nonconsort}It is worth observing that the special shape of the non conservative terms implies in particular that the full quantity  $\widetilde{q}$ belongs to $\mathcal{M}^{\bot}$, where  $\mathcal{M}$ is the orthogonal space  of $Q(W)$; see \eqref{Mcal}. This feature will be crucial in the sequel, to obtain the improved linear decay rate established by Lemma \ref{lin-dec-shr-lem}  below.
\end{remark}

%

To sum up, we collect the previous observations into the following result.
\begin{proposition}
\label{propentropy}
The Hessian of the entropy function $\cS$ defined in \eqref{form-Ent.} is a symmetrizer for the linearization of system \eqref{rad-hyd-1d} around a constant equilibrium state and through the change of variables defined in \eqref{def-z}.
\end{proposition}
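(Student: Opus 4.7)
The plan is to verify the proposition by reorganizing and consolidating the chain of computations carried out in the preceding subsection, and to make explicit which role each algebraic identity plays. The starting point is the linearization of \eqref{rad-hyd-1d} around $\bW \in \Or_{\mathrm{eq}}$ in the conserved variable $W$, which reads
\[
(W-\bW)_t + D_W f^1(\bW)(W-\bW)_x + C(\bW)(W-\bW)_x + D_W Q(\bW)(W-\bW) = \overline{G}(W-\bW)_{xx}.
\]
First, I would introduce the perturbation $Z$ via \eqref{def-z}, which is the natural change of variables induced by the entropy via $U = D_W \cS(W)^\top$, and multiply the resulting system on the left by the Hessian $D_W^2 \cS(\bW)$ evaluated at equilibrium. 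The identities $\tilde A^0 = D_W^2\cS(\bW)^{-1}$ and $D_U W(\bU) = D_W^2\cS(\bW)^{-1}$, together with the definitions of $\tilde A^1$, $\tiL$ and $\tiB$ in \eqref{redef-A1} and the subsequent displays, then yield \eqref{sys-z}–\eqref{pq-form}.

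The second step is to check that the matrices produced by this procedure have the correct structure demanded by Definition \ref{def-vis-bl-nc}. Rather than work directly in $W$, I would exploit the diffeomorphism $W = W(V)$ of Remark \ref{rem:varV} and compute each matrix through the factorization $D_W^2\cS(W) = D_V\Theta(V)\, D_V W(V)^{-1}$ derived in \eqref{*}. The positive definiteness of $\tilde A^0$ follows from the already-computed diagonal expression of $A(W) = D_V W(V)^\top D_W^2\cS(W) D_V W(V)$, and the symmetry of $\tiL$, $\tiB$ and their positive semi-definiteness follow from the explicit block forms displayed after equation \eqref{coeffA1}, together with the equilibrium relation $\bet = \bthe^4$.

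The delicate step, and the one I expect to require the most care, is the symmetry of $\tilde A^1$. Because system \eqref{rad-hyd-1d} contains the non-conservative term $C(W)W_x$, the naive candidate $D_W f^1(\bW) D_W^2\cS(\bW)^{-1}$ is \emph{not} symmetric; this is why the definition is modified to \eqref{redef-A1}. Here I would explicitly inspect the coefficients $a_{24}, a_{34}, a_{42}, a_{43}$ computed in \eqref{coeffA1}: since the asymmetric contributions take the form $4\eta^{5/4}/3$ versus $4\theta\eta/3$ (and similarly with a factor $u$), they coincide precisely when $\eta^{1/4} = \theta$, i.e.\ exactly on $\Or_{\mathrm{eq}}$. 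This confirms condition (b'), which, crucially, suffices because we only symmetrize at a constant equilibrium state.

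Finally, I would collect the nonlinear remainders. The term $g$ in \eqref{pq-form} is a pure flux difference, quadratic in $W-\bW$, and hence contributes $g_x$. For the non-conservative part, the identity
\[
\big(C(W) - C(\bW)\big)W_x = \begin{pmatrix} 0 \\ 0 \\ \tfrac{1}{3}(\eta-\bet)(u-\bu) \\ -\tfrac{1}{3}(\eta-\bet)(u-\bu) \end{pmatrix}_x + \begin{pmatrix} 0 \\ 0 \\ -\tfrac{1}{3}(u-\bu)\eta_x \\ \tfrac{1}{3}(u-\bu)\eta_x \end{pmatrix}
\]
absorbs a divergence piece into $\widetilde g$ and leaves the residual $\widetilde q$ of \eqref{tipq-form}. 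As highlighted in Remark \ref{rem:nonconsort}, the vector structure of $\widetilde q$ places it in $\mathcal{M}^{\perp}$, which will be needed later; for the present proposition it is enough to note that both $\widetilde g$ and $\widetilde q$ are at least quadratic in $Z$ and its first derivatives, so the linearization of \eqref{sys-z} around $Z=0$ indeed coincides with $\tilde A^0 Z_t + \tilde A^1 Z_x + \tiL Z = \tiB Z_{xx}$, whose coefficients satisfy the symmetrizability conditions of Definition \ref{def-vis-bl-nc} thanks to the preceding verifications. This closes the proof.
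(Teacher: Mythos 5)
Your proposal reproduces the paper's approach essentially unchanged: it collects the explicit calculations of the preceding subsection (positive definiteness of $D_W^2\cS$ via the $V$-factorization, symmetry at equilibrium of the corrected $\tiA^1$ by inspection of the entries $a_{24},a_{42},a_{34},a_{43}$ in \eqref{coeffA1}, and the stated block structures of $\tiL$ and $\tiB$), and then observes that the linear part of \eqref{sys-z} therefore satisfies the conditions of Definition \ref{def-vis-bl-nc}, which is what the proposition asserts. One consistency issue to correct, however: you describe the derivation as ``introduce $Z$ via \eqref{def-z} \ldots\ \emph{and} multiply the resulting system on the left by the Hessian $D_W^2\cS(\bW)$.'' Performed together, these two operations would give the identity (not $\tiA^0 = D_W^2\cS(\bW)^{-1}$, as you later claim) as the coefficient of $Z_t$, and would conjugate the first-order coefficient by $D_W^2\cS(\bW)$, which in general destroys symmetry: if $M$ denotes $D_W f^1(\bW)-C(\bW)$ and $S=D_W^2\cS(\bW)$, then $MS^{-1}$ and $SM$ are both symmetric, but $SMS^{-1}$ is symmetric only when $M$ commutes with $S$. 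It is the substitution $W-\bW = D_U W(\bU)Z$ \emph{alone}, with no subsequent left-multiplication, that produces the coefficients $\tiA^0 = S^{-1}$, $\tiA^1 = MS^{-1}$, $\tiL = -D_W Q(\bW)S^{-1}$, $\tiB = G(\bW)S^{-1}$ that you correctly list; the paper's own closing remark that ``one multiplies system \eqref{vis-bl-nc} on the left by the Hessian'' has the same imprecision and should not be taken literally.
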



To finish this section we state a result (without proof) that relates the solutions to the Cauchy problem \eqref{neq-rad-hyd-1d} and \eqref{in-dat-1d} with the ones for system \eqref{sys-z} with initial data $Z_{0}$ corresponding to the initial data $V_{0}$ for \eqref{neq-rad-hyd-1d} through the change of variables \eqref{def-z}. The proof of the result is based on the following observation. We have already mentioned that the correspondence $V \mapsto W$ defines a diffeomorphism; see Remark \ref{rem:varV} above. Thus for $V$ close to $\bV $ we can write
\[
W - \bW = D_{V}W(\bV_{*})(V-\bV),
\]
which, in turn, implies that
\[
Z= D_{U}W(\bU)(W-\bW)=D_{U}W(\bU)D_{V}W(\bV_{*})(W-\bW),
\]
for some $\bV_{*}$ between $V$ and $\bV$. Then, if the solutions $V$ remain close to $\bV$ (see Remark \ref{cld-loc-sol-rm}), so do the variables $W$ and $\bW$, and from the expressions above we get that Sobolev norms for any two of the three perturbed variables $V-\bV$, $W-\bW$ and $Z$ are equivalent. For more details, see the proof of Lemma 3.7 in \cite{PlV2}.

\begin{proposition}\label{Eq-norm-prop}
Assume \hyperref[H1]{\rm{(H$_1$)}} -- \hyperref[H3]{\rm{(H$_3$)}} and consider the initial value problem of system \eqref{neq-rad-hyd-1d} with initial data $V_{0}(x)$ such that $V_{0}-\bV\in H^{s}(\R)^{4}$, $s\geq 3$. Let $V(x,t)$ be the local solution in the interval $[0,T]$, $T>0$, which is given by Theorem \ref{loc-ex-th-1d}. Then the new perturbation variables
\[
Z(x,t)=D_{U}W(\bU)(W(V(x,t))-W(\bV))
\]
solve system \eqref{sys-z} with initial data
\[
Z_{0}(x)=Z(x,0)=D_{U}W(\bU)(W(V_{0}(x))-W(\bV)).
\]
In addition, the following relations hold:
\begin{itemize}
\item[(i)] There exist positive constants $c_{0}$, $C_{0}$ such that
\[
c_{0}\Vert (V-\bV)(t) \Vert_{k} \leq \Vert Z(t) \Vert_{k} \leq C_{0} \Vert (V-\bV)(t) \Vert_{k},
\]
for all $0\leq k \leq s$, and $t\in [0,T]$.
\item[(ii)] 
There exist constants $c_{1}$, $C_{1}>0$ such that 
\[
c_{1}\Vert (V-\bV)(t) \Vert_{k} \leq \Vert (W-\bW)(t) \Vert_{k} \leq C_{1} \Vert (V-\bV)(t) \Vert_{k},
\]
for all $0 \leq k \leq s$, and $t\in [0,T]$.
\item[(iii)] If, in addition, the initial perturbation satisfies $V_{0}-\bV\in L^{1}(\R)^4$, so does the initial data for the system in $Z$, and there holds
\[
\Vert Z_{0} \Vert_{L^{1}} \leq C \Vert V_{0}-\bV \Vert_{L^{1}},
\]
for some constant $C>0$.
\end{itemize}
\end{proposition}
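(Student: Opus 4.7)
The plan is to reduce all three claims to standard facts about compositions with smooth diffeomorphisms between neighborhoods of equilibrium states. First I would observe that the map $V \mapsto W(V)$ is a smooth diffeomorphism, with non-singular Jacobian $D_V W(V)$ exhibited in Remark \ref{rem:varV}, while $W \mapsto Z = D_U W(\bU)^{-1}(W - \bW)$ is a linear isomorphism; indeed $D_U W(\bU) = \tiA^0(\bU)$ is positive definite by the computation leading to \eqref{coeffA0}. Therefore the composite map $V \mapsto Z$ is a smooth diffeomorphism from a neighborhood of $\bV$ onto a neighborhood of the origin, sending $\bV$ to $0$. By Theorem \ref{loc-ex-th-1d} and Remark \ref{cld-loc-sol-rm}, the solution $V(x,t)$ stays in a uniform compact subset $K \subset \cV$ containing $\bV$ in its interior for all $(x,t) \in \R \times [0,T]$, so all partial derivatives (of any order) of the maps $V \mapsto Z$ and $Z \mapsto V$ are uniformly bounded on the range of $V$.

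To verify that $Z$ solves \eqref{sys-z}, I would simply note that the chain-rule computations carried out in Section \ref{secnpv} to derive \eqref{sys-z} are pointwise identities that make sense whenever $V(x,t)$ takes values in the domain of the diffeomorphism. Substituting the local classical solution $V(x,t)$ thus yields the stated equation, and the claimed regularity of $Z$ (continuous into $H^{s}$ for all components, $C^1$ into $H^{s-1}$ for the hyperbolic entries and $H^{s-2}$ for the radiation entry) transfers directly from that of $V$ through the smooth change of variables.

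For the equivalences in (i) and (ii), I would write $W - \bW = M(V)(V - \bV)$ with $M(V) = \int_0^{1} D_V W(\bV + \tau(V - \bV))\, d\tau$, and analogously relate $Z$ to $W - \bW$ through the constant invertible matrix $D_U W(\bU)^{-1}$. For $k = 0$ the estimate follows at once from the uniform upper and lower bounds on $M$ and $M^{-1}$ on the compact set $K$. For $1 \leq k \leq s$, the bounds come from the Moser-type composition inequalities, which yield $\Vert F(V) - F(\bV) \Vert_{k} \leq C\, \Vert V - \bV \Vert_{k}$ whenever $F$ is smooth with $F(\bV) = 0$, provided $V$ stays in a compact set and $\Vert V - \bV \Vert_{L^\infty}$ is bounded; the latter is guaranteed by the Sobolev embedding $H^{s}(\R) \hookrightarrow L^{\infty}(\R)$ since $s \geq 3$. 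Applying the same argument to the inverse diffeomorphism gives the opposite inequality. Part (iii) then follows from the pointwise inequality $|Z_0(x)| \leq C\,|V_0(x) - \bV|$, obtained from the mean value theorem applied to the smooth map $V \mapsto Z$ that vanishes at $\bV$, integrated in $x$. There is no serious obstacle: the only bookkeeping hurdle is the application of the Moser calculus for chains of nonlinear compositions, which is standard and mirrors the argument of Lemma 3.7 in \cite{PlV2} cited by the authors.
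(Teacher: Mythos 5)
Your proposal is correct and follows essentially the same route that the paper sketches (and then defers to Lemma~3.7 of \cite{PlV2}): exploit that $V\mapsto W(V)$ is a smooth diffeomorphism near $\bV$ with uniformly bounded derivatives on the compact range of the local solution, that $W\mapsto Z$ is a fixed linear isomorphism since $D_U W(\bU)=\tiA^0(\bU)$ is positive definite, and then obtain the norm equivalences via the mean-value/integral representation $W-\bW=M(V)(V-\bV)$ together with Moser-type composition estimates for $k\geq 1$ and the Sobolev embedding $H^s(\R)\hookrightarrow L^\infty(\R)$. Your write-up simply makes explicit the steps the paper leaves implicit, so there is no substantive difference in approach.
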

\section{Dissipative structure and linear decay rate of solutions}
\label{sec:linear}

As we have already mentioned at the end of Section \ref{sec:local}, we need an additional energy estimate to get the correct a priori estimate and to complete the nonlinear analysis. To this end, in this section we examine the dissipative structure of the system at the linear level, which  will also imply the linear decay rate of solutions. These linear properties play a key role to obtain the desired a priori estimate and the decay rate of the solutions for the full nonlinear system.

It is to be noticed that we could have performed the analysis in the system written in the variables $V$. Indeed, the computations prior to the statement of Lemmata \ref{lem-apr-ee-1} and \ref{lem-apr-ee-2} imply the dissipative structure of the system. However, we need a sharper linear energy estimate to close the nonlinear one and this is possible only if we work with the set of variables identified in the previous Section \ref{secnpv}.

\subsection{Dissipative structure of the linear system}
Let us start by recalling the system that we obtained in Section \ref{subsec:yong}. Let us fix a constant state $\bV$. To this constant state corresponds a (unique) constant state in the conserved variables $W=(\rho, \rho u, \rho E, \eta)$ which we denote by $\bW$;  see Section \ref{subsec:yong}, Remark \ref{rem:varV}. Thus the system in the new perturbed variables reads
\begin{equation}\label{sys-z-i}
\tiA^{0}Z_{t} + \tiA^{1}Z_{x}+\tiL Z = \tiB Z_{xx} + \tilde{g}_{x} + \tilde{q},
\end{equation}
where
\[
Z = D_{U}W(\bU)( W -\bW),
\]
with $U$ being the gradient with respect to $W$ of the entropy function of the fluid, and $\bU=U(\bW)$. The (constant) matrix coefficients are given by
\begin{equation}\label{btiA0-A1}
\begin{aligned}
\tiA^{0}&=\tiA^{0}(\bU)= \begin{pmatrix}
\brho\, \bthe / \bp_{\rho} & \brho\, \bu \, \bthe / \bp_{\rho}  & \bar{a} & 0 \\ \brho \, \bu \, \bthe / \bp_{\rho} & \brho\, \bthe  (1+\bu^{2}/\bp_{\rho}) & \bar{b} & 0 \\ \bar{a} & \bar{b} & \bar{c} & 0 \\ 0 & 0 & 0 & 4\bet^{5/4}
\end{pmatrix}, \\
\tiA^{1}&=\tiA^{1}(\bU) = \begin{pmatrix}
\bar{a}_{11} & \bar{a}_{12} & \bar{a}_{13} & 0 \\ \bar{a}_{12} & \bar{a}_{22} & \bar{a}_{23} & \bar{a}_{24} \\ \bar{a}_{13} & \bar{a}_{23} & \bar{a}_{33} & \bar{a}_{34} \\ 0 & \bar{a}_{24} & \bar{a}_{34} & \bar{a}_{44}
\end{pmatrix},
\end{aligned}
\end{equation}
already computed in the previous section and evaluated here at $\bU=U(W(\bV))$; see in particular
\eqref{coeffA0} and \eqref{coeffA1} and the subsequent expressions. Moreover, 
\begin{equation}\label{tiL-B}
\begin{aligned}
\tiL &= \begin{pmatrix}
0 & 0 & 0 & 0 \\ 0 & 0 & 0 & 0 \\ 0 & 0 & 4\sigma_{a} \bthe^{5} & -4\sigma_{a}\bet^{5/4} \\ 0 & 0 & -4\sigma_{a}\bthe^{5} & 4\sigma_{a}\bet^{5/4} 
\end{pmatrix}, \\
\tiB &= \begin{pmatrix}
0 & 0 & 0 & 0 \\ 0 & 0 & 0 & 0 \\  0 & 0 & 0 & 0 \\ 0 & 0 & 0 & 4\bet^{5/4}/3\sigma_{s}
\end{pmatrix}
\end{aligned}
\end{equation}
and the terms $\tilde{g}$ and $\tilde{q}$,   defined in \eqref{tipq-form}, are such that
\begin{equation}\label{order-pq}
\begin{aligned}
\tilde{g} &= O \left( \vert W - \bW \vert^{2} \right), \\
\tilde{q} &= O \left( \vert W - \bW \vert \vert \eta_{x} \vert + \vert W- \bW \vert^{2} \right),
\end{aligned}
\end{equation}
with $\tilde{q}\in \mathcal{M}^{\bot}$, where $\mathcal{M}$ is the null space of $\tiL$. 

Next we verify that system \eqref{sys-z-i} at the linear level, that is, 
\begin{equation}\label{sys-z-lin}
\tiA^{0}Z_{t}+ \tiA^{1}Z_{x}+\tiL Z = \tiB Z_{xx},
\end{equation}
satisfies the genuine coupling condition. To this end, observe that, from the expression for $\tiL$ and $\tiB$, it is easy to verify that
\[
\ker \tiL \cap \ker \tiB= \mbox{span}\left\lbrace (1,0,0,0),(0,1,0,0) \right\rbrace.
\]
Then if $\psi \in \ker \tiL \cap \ker \tiB$, $\psi \neq 0$, then $\psi$ is of the form $\psi = (a_{1},a_{2},0,0)$ with $a_{1}$ and $a_{2}$ not being simultaneously zero. Thus we have 
\begin{equation}\label{gen-cou-ver}
\mu \tiA^{0}\psi + \tiA^{1}\psi = \begin{pmatrix}
\mu a_{1}\brho\, \bthe /\bp_{\rho} + \mu a_{2}\brho\, \bu \, \bthe/ \bp_{\rho} + a_{1}\bar{a}_{11}+a_{2}\bar{a}_{12} \\ \mu a_{1}\brho\, \bu \, \bthe/ \bp_{\rho} +\mu a_{2}(1+\bu^{2}/ \bp_{\rho}) + a_{1}\bar{a}_{12} + a_{2}\bar{a}_{22} \\ \mu a_{1}\bar{a} + \mu a_{2}\bar{b} + a_{1}\bar{a}_{13} + a_{2}\bar{a}_{23} \\ a_{2}\bar{a}_{24}
\end{pmatrix}.
\end{equation}
Let us assume that $\mu \tiA^{0}\psi + \tiA^{1}\psi = 0$, for $\psi =(a_{1},a_{2}, 0, 0) \neq 0$. Then, as $\bar{a}_{24}=4\bthe^{5}/3$, we conclude that $a_{2}=0$. Using this, the first row of \eqref{gen-cou-ver} implies that
\[
0= \mu a_{1}\brho\, \bthe/\bp_{\rho} + a_{1}\bar{a}_{11}= \mu a_{1}\brho\, \bthe/\bp_{\rho} +a_{1}\brho \, \bu \, \bthe/\bp_{\rho},
\]
so that $\mu = -\bu$. However, if we use this relation for the expression in the second row of \eqref{gen-cou-ver} we obtain
\[
0= \mu a_{1}\brho \, \bu\, \bthe /\bp_{\rho} +a_{1}\bar{a}_{12} = -a_{1}\brho \, \bu^{2}\, \bthe / \bp_{\rho} + a_{1}\brho\, \bthe (1 +\bu^{2}/\bp_{\rho}) = a_{1}\brho\, \bthe,
\]
which holds only if $a_{1}=0$ as $\brho$, $\bthe>0$ and this contradicts the fact that $\psi \neq 0$. Therefore we have proved that for any $\psi \in \ker\tiL \cap \ker \tiB$, $\psi \neq 0$, it holds
\[
\mu \tiA^{0}\psi + \tiA^{1}\psi \neq 0,\quad \forall \mu \in \R.
\]
 We summarize the computations above in the following result. 

\begin{proposition}
The linear system \eqref{sys-z-lin} satisfies the genuine coupling condition. 
\end{proposition}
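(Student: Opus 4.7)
The plan is to verify condition (ii) of the Equivalence Theorem \ref{Eq-Th} applied to the system \eqref{sys-z-lin}. In one space dimension, the only relevant directions are $\omega = \pm 1$, for which $A(\omega) = \pm \tiA^{1}$ and $B(\omega) = \tiB$, so genuine coupling reduces to showing that every nonzero $\psi \in \ker\tiL \cap \ker\tiB$ satisfies $\mu\tiA^{0}\psi + \tiA^{1}\psi \neq 0$ for every $\mu \in \R$.

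The first step is to identify the common kernel from the explicit block form \eqref{tiL-B}. Using the equilibrium identity $\bet = \bthe^{4}$, the third and fourth rows of $\tiL$ are negatives of each other, so $\tiL$ has rank one and $\ker\tiL = \{x : x_{3} = x_{4}\}$; meanwhile $\tiB$ has only its $(4,4)$ entry nonzero, giving $\ker\tiB = \{x : x_{4} = 0\}$. Intersecting the two constraints yields $\ker\tiL \cap \ker\tiB = \Span\{(1,0,0,0),(0,1,0,0)\}$, so any candidate has the form $\psi = (a_{1}, a_{2}, 0, 0)$ with $(a_{1}, a_{2}) \neq (0,0)$.

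The second step is to evaluate the four components of $\mu\tiA^{0}\psi + \tiA^{1}\psi$ using \eqref{btiA0-A1} and eliminate $a_{1}, a_{2}$ in cascade. The fourth component equals $a_{2}\bar{a}_{24}$ with $\bar{a}_{24} = 4\bet^{5/4}/3 > 0$, forcing $a_{2} = 0$. With $a_{2} = 0$, the first component reduces to $a_{1}(\mu + \bu)\brho\bthe/\bp_{\rho}$, and strict positivity of $\brho, \bthe, \bp_{\rho}$ from hypothesis \hyperref[H2]{(H$_2$)} implies either $a_{1} = 0$ (already a contradiction) or $\mu = -\bu$. Substituting $\mu = -\bu$ into the second component produces the cancellation
\[
a_{1}\Bigl(-\tfrac{\brho\bu^{2}\bthe}{\bp_{\rho}} + \brho\bthe\bigl(1 + \tfrac{\bu^{2}}{\bp_{\rho}}\bigr)\Bigr) = a_{1}\brho\bthe,
\]
which vanishes only if $a_{1} = 0$, again contradicting $\psi \neq 0$.

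I do not expect any real obstacle: the argument is purely algebraic and essentially triangular once the elimination order (first $a_{2}$ from the fourth component, then $a_{1}$) is recognized. The only computation with content is the cancellation in the second component, which relies on the explicit forms of $\bar{a}_{12}$ and $(\tiA^{0})_{21}$ derived in Section \ref{subsec:yong}; everything else is supplied by the block structure of \eqref{tiL-B}, the equilibrium relation $\bet = \bthe^{4}$, and the strict positivity of the thermodynamic and radiation quantities. Notably, at no point do we use the decomposition of $\tiA^{1}$ beyond its leading $2\times 2$ block and its fourth-column entry $\bar{a}_{24}$; thus the verification of genuine coupling is really a low-dimensional fact about the interaction between the fluid ($\rho,u$) variables and the radiation coupling $\bar{a}_{24}$.
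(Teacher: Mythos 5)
Your proof is correct and follows essentially the same argument as the paper: identify $\ker\tiL \cap \ker\tiB = \Span\{(1,0,0,0),(0,1,0,0)\}$ from the explicit forms in \eqref{tiL-B} and the equilibrium relation $\bet = \bthe^4$, then eliminate $a_2$ from the fourth component (via $\bar{a}_{24} > 0$), force $\mu = -\bu$ from the first component, and derive the contradiction $a_1\brho\bthe = 0$ from the second component. Your handling of the $a_1 = 0$ branch when extracting $\mu = -\bu$ is slightly more careful than the paper's, but the route is identical.
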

As a consequence of the Equivalence Theorem, there exists a compensating matrix for system \eqref{sys-z-lin} which can be used, in turn, to obtain the following pointwise energy estimate in the Fourier space for the solution of system \eqref{sys-z-lin}. This is a standard result that can be found in \cite{KaTh83,AnMP20}.

\begin{lemma}
The solutions $Z(x,t)$ of the linear system \eqref{sys-z-lin} satisfy the estimate
\begin{equation}\label{pw-ee-Four}
\vert \hZ(\xi, t) \vert \leq C \exp\left( -\frac{k \xi^{2}}{1+\xi^{2}}t \right) \vert \hZ(\xi, 0) \vert,
\end{equation}
for all $\xi \in \R$ and $t\geq 0$, with some positive constants $C$ and $k$. Here $\hZ$ denotes the Fourier transform of $Z$.
\end{lemma}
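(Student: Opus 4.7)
The plan is to follow the classical Kawashima--Shizuta Lyapunov--functional scheme and exploit the compensating matrix $K$ whose existence was granted, in the preceding paragraph, by the Equivalence Theorem \ref{Eq-Th} together with the genuine coupling condition just verified for \eqref{sys-z-lin}. First I would apply the Fourier transform in $x$ to \eqref{sys-z-lin} to obtain the linear ODE
\[
\tiA^{0}\hZ_{t}+i\xi\tiA^{1}\hZ+\tiL\hZ+\xi^{2}\tiB\hZ=0,\qquad \xi\in\R,\ t\geq 0.
\]
Taking the Hermitian inner product with $\hZ$, using that $\tiA^{0}$, $\tiA^{1}$, $\tiB$ and $\tiL$ are all real and symmetric (so that $\Re\langle i\xi \tiA^{1}\hZ,\hZ\rangle=0$), and doubling real parts, I would obtain the identity
\[
\frac{d}{dt}\langle\tiA^{0}\hZ,\hZ\rangle+2\xi^{2}\langle\tiB\hZ,\hZ\rangle+2\langle\tiL\hZ,\hZ\rangle=0.
\]
By itself this identity does not yield decay, because $\tiL$ and $\tiB$ are only positive semi--definite with nontrivial common kernel.

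To compensate the missing dissipation I would use the matrix $K$ of Theorem \ref{Eq-Th}(iii), which in one space dimension is simply a constant real $4\times 4$ matrix such that $K\tiA^{0}$ is skew--symmetric and the symmetric part $[K\tiA^{1}]^{s}+\tiB+\tiL$ is positive definite. Differentiating $\langle iK\hZ,\hZ\rangle$ in $t$, substituting $\hZ_{t}$ from the Fourier--transformed equation, and using the skew--symmetry of $K\tiA^{0}$ to cancel the time--derivative contributions, I would obtain an identity of the form
\[
\frac{d}{dt}\bigl(-\Im\langle K\hZ,\hZ\rangle\bigr)+2\xi\,\langle([K\tiA^{1}]^{s})\hZ,\hZ\rangle
=\xi R_{1}(\hZ)+\xi^{3}R_{2}(\hZ),
\]
where $R_{1},R_{2}$ are quadratic forms involving $K$, $\tiL$, $\tiB$. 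Then I would introduce the Lyapunov functional
\[
\mathcal{E}[\hZ]:=\langle\tiA^{0}\hZ,\hZ\rangle-\alpha\,\frac{\xi}{1+\xi^{2}}\,\Im\langle K\hZ,\hZ\rangle,
\]
choose $\alpha>0$ small enough so that $\mathcal{E}$ is equivalent to $|\hZ|^{2}$ uniformly in $\xi$, and combine the two identities above with the weight $\alpha\xi/(1+\xi^{2})$. Absorbing the cross terms $\xi^{3}R_{2}$ into $2\xi^{2}\langle\tiB\hZ,\hZ\rangle+2\langle\tiL\hZ,\hZ\rangle$ thanks to the factor $1/(1+\xi^{2})$, and using the positive definiteness of $[K\tiA^{1}]^{s}+\tiB+\tiL$, I would arrive at the differential inequality
\[
\frac{d}{dt}\mathcal{E}[\hZ]+c\,\frac{\xi^{2}}{1+\xi^{2}}\,\mathcal{E}[\hZ]\leq 0
\]
for some $c>0$ uniform in $\xi$. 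Integrating by Gronwall and invoking the equivalence $\mathcal{E}[\hZ]\sim|\hZ|^{2}$ yields the desired pointwise estimate \eqref{pw-ee-Four}.

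The main obstacle is the last step: one has to tune the parameter $\alpha$ so that simultaneously (i) $\mathcal{E}$ remains a positive quadratic form equivalent to $|\hZ|^{2}$, and (ii) the cubic--in--$\xi$ remainders produced by differentiating $\Im\langle K\hZ,\hZ\rangle$ are dominated by the genuine dissipation $\xi^{2}\langle\tiB\hZ,\hZ\rangle+\langle\tiL\hZ,\hZ\rangle+2\alpha\xi^{2}/(1+\xi^{2})\langle([K\tiA^{1}]^{s})\hZ,\hZ\rangle$ uniformly in $\xi\in\R$. Both requirements hinge on the quantitative positivity of $[K\tiA^{1}]^{s}+\tiB+\tiL$ and on the weight $\xi/(1+\xi^{2})$, which correctly interpolates between the relaxation regime ($|\xi|\ll 1$) and the diffusive regime ($|\xi|\gg 1$); this is exactly where the dissipative rate $\xi^{2}/(1+\xi^{2})$ in \eqref{pw-ee-Four} originates, in agreement with statement (iv) of Theorem \ref{Eq-Th}. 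The remaining computations are routine and can be found, for instance, in \cite{KaTh83,AnMP20}.
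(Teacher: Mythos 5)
Your plan is essentially the standard Kawashima--Shizuta compensating-matrix argument, which is precisely what the paper invokes by citing Lemma~3.A.1 of \cite{KaTh83} and Lemma~5.1 of \cite{AnMP20}. The one small correction is that the Lyapunov correction term should be $\Re\langle iK\tiA^{0}\hZ,\hZ\rangle$ (equivalently, $-\Im\langle K\tiA^{0}\hZ,\hZ\rangle$) rather than $-\Im\langle K\hZ,\hZ\rangle$: it is $K\tiA^{0}$ that is skew-symmetric, so after substituting $\tiA^{0}\hZ_{t}=-(i\xi\tiA^{1}+\tiL+\xi^{2}\tiB)\hZ$ into $\tfrac{d}{dt}\Re\langle iK\tiA^{0}\hZ,\hZ\rangle=2\Re\langle iK\tiA^{0}\hZ_{t},\hZ\rangle$ the inverse $(\tiA^{0})^{-1}$ never appears and the identity you want comes out directly.
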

\begin{proof}
See the proof of Lemma 3.A.1 in \cite{KaTh83}, or that of Lemma 5.1 in \cite{AnMP20}. 
\end{proof}
\subsection{Linear decay of solutions}
In this section we obtain the decay rate of solutions to the linear system \eqref{sys-z-lin}. This is a direct consequence of the pointwise energy estimate \eqref{pw-ee-Four} in the Fourier space upon application of Plancherel's Theorem. For instance, estimate \eqref{pw-ee-Four} directly yields the following result, which establishes a linear decay rate for the solutions. We omit its proof because it can be easily obtained using the arguments of Lemma \ref{lin-dec-shr-lem} below, which contains an improved decay rate.

\begin{lemma}
Let us consider the Cauchy problem for system \eqref{sys-z-lin} with initial data $Z_{0}(x)\in \big(H^{s}(\R) \cap L^{1}(\R)\big)^4$, for $s\geq 0$. Then for each fixed $0 \leq \ell \leq s$ there holds
\begin{equation}
\label{lin-ene-est}
\Vert \partial_{x}^{\ell}Z(t) \Vert_{0} \leq C e^{-c_{1}t}\Vert \partial_{x}^{\ell}Z_{0} \Vert_{0} + C\big(1+t \big)^{-(\ell/2 + 1/4)} \Vert Z_{0} \Vert_{L^{1}}, 
\end{equation}
for $t\geq 0$, and some positive constants $C$, $c_{1}$.  
\end{lemma}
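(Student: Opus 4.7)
The plan is to derive the mixed exponential/algebraic decay estimate \eqref{lin-ene-est} from the Fourier-side pointwise bound \eqref{pw-ee-Four} by an application of Plancherel's theorem combined with a standard low/high frequency splitting. The starting point is the identity
\[
\Vert \partial_{x}^{\ell}Z(t) \Vert_{0}^{2} = \int_{\R} |\xi|^{2\ell}\, |\hZ(\xi,t)|^{2}\, d\xi,
\]
into which I substitute \eqref{pw-ee-Four}. The exponent $k\xi^{2}/(1+\xi^{2})$ behaves very differently at low and high frequencies, so I would split the integral at $|\xi|=1$ using the elementary inequalities
\[
\frac{k\xi^{2}}{1+\xi^{2}} \geq \tfrac{k}{2}\,\xi^{2} \text{ for } |\xi|\leq 1, \qquad \frac{k\xi^{2}}{1+\xi^{2}} \geq \tfrac{k}{2} \text{ for } |\xi|\geq 1.
\]

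For the high-frequency region $\{|\xi|\geq 1\}$, the second inequality produces an exponential factor $e^{-kt}$ that can be pulled outside the integral, and Plancherel's identity identifies what remains with $\Vert \partial_{x}^{\ell}Z_{0} \Vert_{0}^{2}$; this yields the first term on the right-hand side of \eqref{lin-ene-est}. For the low-frequency region $\{|\xi|\leq 1\}$ I would use the crude bound $|\hZ(\xi,0)| \leq \Vert Z_{0} \Vert_{L^{1}}$, which reduces matters to estimating the explicit integral
\[
\int_{|\xi|\leq 1} |\xi|^{2\ell} \exp(-k\xi^{2}t)\, d\xi.
\]
A change of variables $y = \xi\sqrt{1+t}$ bounds this integral by $C(1+t)^{-(\ell+1/2)}$, and after taking square roots I recover the algebraic factor $(1+t)^{-(\ell/2+1/4)} \Vert Z_{0} \Vert_{L^{1}}$. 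Adding the two contributions and using $\sqrt{\alpha^{2}+\beta^{2}} \leq \alpha + \beta$ for nonnegative quantities gives \eqref{lin-ene-est}.

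I do not expect any genuine obstacle: the argument is the textbook one for hyperbolic-parabolic systems of Kawashima-Shizuta type and is a direct application of \eqref{pw-ee-Four}, entirely parallel to Lemma 3.A.1 of \cite{KaTh83} and Lemma 5.1 of \cite{AnMP20} already invoked by the authors. The only mild bookkeeping point is to track both exponents simultaneously, so that the spatial-derivative index $\ell$ and the base decay rate $1/4$ (which comes from the one-dimensional Fourier integral over $|\xi|\leq 1$ and the Jacobian $(1+t)^{-1/2}$) combine correctly into the final exponent $\ell/2+1/4$.
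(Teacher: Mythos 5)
Your proof is correct and follows the standard Plancherel plus high/low-frequency splitting argument, which is exactly what the paper intends: the paper omits this proof and refers to the arguments of Lemma~\ref{lin-dec-shr-lem}, where the integral is split at $|\xi|=R$, the factor $\exp\big(-\tfrac{2k\xi^2}{1+\xi^2}t\big)$ is bounded by $e^{-c_1 t}$ in the high-frequency zone, and the $L^1$ bound on $\hat{Z}_0$ plus a parabolic-type Gaussian integral handles the low-frequency zone. The only trivial difference is your choice of cut-off at $|\xi|=1$ instead of a generic $R$, which is immaterial here since the non-improved estimate does not require the spectral refinement of Lemma~\ref{sptr-anal-lem}.
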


Now we define 
\begin{equation}\label{def-sem}
\left(e^{t\Phi}h \right)(x) := \frac{1}{(2 \pi)^{1/2}}\int_{\R}e^{t\Phi(i \xi)}\hat{h}(\xi)e^{i\xi x}\: d\xi,
\end{equation}
where
\[
\Phi(i \xi):= -\big( \tiA^{0} \big)^{-1}\big(\tiL + i\xi \tiA^{1} - (i \xi)^{2}\tiB \big).
\]
Then $\left( e^{t\Phi}h \right)(x)$ is the solution to the Cauchy problem of \eqref{sys-z-lin} with initial condition $Z_{0}=h$. Thus the estimate \eqref{lin-ene-est} can be rewritten as follows.
\begin{corollary}
\label{lin-dec-cor}
For $h \in \big(H^{s}(\R) \cap L^{1}(\R)\big)^4$, $s\geq 0$, we have the following estimate
\begin{equation}\label{lin-ene-est-sem}
\Vert \partial_{x}^{\ell}e^{t \Phi}h \Vert_{0} \leq C e^{-c_{1}t}\Vert \partial_{x}^{\ell} h \Vert_{0} + C\big(1+t \big)^{-(\ell/2 + 1/4)} \Vert h \Vert_{L^{1}}.
\end{equation}
for each (fixed) $0 \leq \ell \leq s$, and some positive constant $C$, $c_{1}$.
\end{corollary}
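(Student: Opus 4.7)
The plan is to observe that Corollary \ref{lin-dec-cor} is essentially a semigroup-notational restatement of the preceding lemma. First I would verify that the function $(e^{t\Phi}h)(x)$ defined in \eqref{def-sem} is indeed the solution of the linear Cauchy problem \eqref{sys-z-lin} with initial datum $Z_0 = h$. Taking the Fourier transform of \eqref{sys-z-lin} yields the ODE $\partial_t \hat{Z}(\xi,t) = \Phi(i\xi)\hat{Z}(\xi,t)$ in the parameter $\xi$, whose solution is $\hat{Z}(\xi,t) = e^{t\Phi(i\xi)}\hat{h}(\xi)$; inverting the Fourier transform recovers precisely the definition of $e^{t\Phi}h$. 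With this identification, the estimate \eqref{lin-ene-est-sem} coincides with \eqref{lin-ene-est} applied to $Z(x,t) = (e^{t\Phi}h)(x)$.

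For completeness I would also indicate how \eqref{lin-ene-est-sem} is derived from the pointwise Fourier bound \eqref{pw-ee-Four}. By Plancherel's identity,
\[
\Vert \partial_x^{\ell} e^{t\Phi}h \Vert_0^2 = \int_{\R} |\xi|^{2\ell} |\hat{Z}(\xi,t)|^2\, d\xi \leq C \int_{\R} |\xi|^{2\ell} \exp\!\left(-\frac{2k\xi^2}{1+\xi^2}t\right) |\hat{h}(\xi)|^2\, d\xi.
\]
I would then split the integral at $|\xi| = 1$. On the high-frequency region $|\xi|\geq 1$ one has $\xi^2/(1+\xi^2) \geq 1/2$, so the exponential kernel is bounded by $e^{-kt}$, and the remaining factor yields at most $\Vert \partial_x^{\ell}h\Vert_0^2$. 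On the low-frequency region $|\xi|\leq 1$ one has $\xi^2/(1+\xi^2)\geq \xi^2/2$ and $|\hat{h}(\xi)| \leq (2\pi)^{-1/2}\Vert h\Vert_{L^1}$, which reduces matters to estimating the Gaussian-type integral $\int_{|\xi|\leq 1} |\xi|^{2\ell} e^{-kt\xi^2}\,d\xi$.

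The decay rate then follows from a rescaling $\eta = \sqrt{kt}\,\xi$ valid for $t\geq 1$, yielding a factor $(kt)^{-(\ell + 1/2)}$ while the integral in $\eta$ remains uniformly bounded; for $0\leq t\leq 1$ both the integral and $(1+t)^{-(\ell+1/2)}$ are comparable to a constant. Combining the two regimes and taking square roots produces the claimed rate $(1+t)^{-(\ell/2 + 1/4)}$ in front of $\Vert h\Vert_{L^1}$, together with the exponential factor $e^{-c_1 t}$ (with, say, $c_1 = k/2$) multiplying $\Vert \partial_x^{\ell}h\Vert_0$.

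The only mildly delicate point is the splitting at $|\xi| = 1$ together with the rescaling for $t \geq 1$, since one must handle the $|\xi|^{2\ell}$ factor in the low-frequency integral uniformly in $\ell \leq s$; but this is routine, and no genuine obstacle arises because the pointwise bound \eqref{pw-ee-Four} already encodes the dissipative structure provided by the genuine coupling condition verified earlier in the section.
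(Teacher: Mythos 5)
Your proposal is correct and follows essentially the same route as the paper: identify $e^{t\Phi}h$ with the solution of \eqref{sys-z-lin}, then derive the estimate from the pointwise Fourier bound \eqref{pw-ee-Four} via Plancherel, a high/low-frequency split, and a Gaussian rescaling in the low-frequency integral. This is exactly the argument the paper defers to (``the arguments of Lemma \ref{lin-dec-shr-lem}''), except that here the refined low-frequency estimate from the spectral analysis (Lemma \ref{sptr-anal-lem}) is not needed, which is precisely why one obtains the rate $(1+t)^{-(\ell/2+1/4)}$ rather than the improved $(1+t)^{-(\ell/2+3/4)}$.
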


Next, we follow \cite{KY09} and improve estimate \eqref{lin-ene-est} in the case when $h \in \mathcal{M}^{\bot}$, where $\mathcal{M}$ is the null space of $\tiL$.
\begin{lemma}
\label{lin-dec-shr-lem}
Let $h \in \big(H^{s}(\R) \cap L^{1}(\R)\big)^4$, $s\geq 0$, be such that $h \in \mathcal{M}^{\bot}$. Then there holds
\begin{equation}
\label{lin-est-sem-imp}
\Vert \partial_{x}^{\ell}e^{t\Phi}\big( \tiA^{0} \big)^{-1} h \Vert_{0} \leq Ce^{-c_{1}t}\Vert \partial_{x}^{\ell}h \Vert_{0} + C(1+t)^{-(\ell/2 + 3/4)}\Vert h \Vert_{L^{1}},
\end{equation}
for all $0 \leq \ell \leq s$ and $t\geq 0$, and some positive constants $C$, $c_{1}$.
\end{lemma}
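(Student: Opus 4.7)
The proof will hinge on Fourier analysis combined with a careful spectral decomposition of the Fourier symbol $\Phi(i\xi)$ near $\xi=0$, exploiting the orthogonality condition $h\in\mathcal{M}^{\perp}$ to gain an extra factor of $|\xi|$ at low frequencies. By Plancherel's theorem, it suffices to bound $|\xi|^{\ell}|\widehat{Z}(\xi,t)| = |\xi|^{\ell}|e^{t\Phi(i\xi)}(\tiA^{0})^{-1}\hat h(\xi)|$ in $L^{2}_{\xi}$. I will split the frequency domain into a low-frequency region $|\xi|\le r_{0}$ and a high-frequency region $|\xi|>r_{0}$, treating each separately.

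For the high-frequency region, the pointwise Fourier estimate \eqref{pw-ee-Four} already yields $|\widehat Z(\xi,t)|\le C e^{-c_{0}t}|\widehat h(\xi)|$ uniformly in $|\xi|\ge r_{0}$, since there $\xi^{2}/(1+\xi^{2})$ is bounded below by a positive constant. This contributes the exponentially decaying term $Ce^{-c_{1}t}\|\partial_{x}^{\ell}h\|_{0}$ on the right-hand side of \eqref{lin-est-sem-imp} via Plancherel.

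For the low-frequency region, I will invoke the spectral analysis of $\Phi(i\xi)$ carried out in Appendix \ref{sptr-anal}. At $\xi=0$ one has $\Phi(0)=-(\tiA^{0})^{-1}\tiL$, which is self-adjoint with respect to the $\tiA^{0}$-inner product and whose kernel is precisely $\mathcal{M}=\ker\tiL$. Standard analytic perturbation theory then yields, for $|\xi|\le r_{0}$ small, a spectral decomposition into smooth eigenprojectors $P_{s}(\xi)$ (slow modes, eigenvalues $\lambda_{j}(\xi)=i\mu_{j}\xi - c_{j}\xi^{2}+O(\xi^{3})$ with $c_{j}>0$) and $P_{f}(\xi)=I-P_{s}(\xi)$ (fast modes, eigenvalues with real part bounded away from $0$), with $P_{s}(0)$ the $\tiA^{0}$-orthogonal projection onto $\mathcal{M}$. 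The key algebraic identity is
\begin{equation*}
(P_{s}(0)(\tiA^{0})^{-1}\hat h(\xi),\psi)_{\tiA^{0}}=\langle\hat h(\xi),\psi\rangle\qquad \forall\psi\in\mathcal{M},
\end{equation*}
so the hypothesis $h\in\mathcal{M}^{\perp}$ (equivalently $\hat h(\xi)\in\mathcal{M}^{\perp}$ for every $\xi$) forces $P_{s}(0)(\tiA^{0})^{-1}\hat h(\xi)=0$. By smoothness of $P_{s}(\xi)$ this yields $|P_{s}(\xi)(\tiA^{0})^{-1}\hat h(\xi)|\le C|\xi||\hat h(\xi)|$ for $|\xi|\le r_{0}$, while $|P_{f}(\xi)(\tiA^{0})^{-1}\hat h(\xi)|\le C|\hat h(\xi)|$ is merely bounded. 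Combining with the decay properties of the semigroup on each spectral subspace,
\begin{equation*}
|\widehat Z(\xi,t)|\le C|\xi|e^{-c\xi^{2}t}|\hat h(\xi)|+Ce^{-c_{0}t}|\hat h(\xi)|,\qquad |\xi|\le r_{0}.
\end{equation*}

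Finally, squaring, multiplying by $|\xi|^{2\ell}$, integrating, and using $\|\hat h\|_{L^{\infty}}\le (2\pi)^{-1/2}\|h\|_{L^{1}}$ produces
\begin{equation*}
\int_{|\xi|\le r_{0}}|\xi|^{2\ell+2}e^{-2c\xi^{2}t}\,d\xi\le C(1+t)^{-(2\ell+3)/2},
\end{equation*}
giving the improved rate $(1+t)^{-\ell/2-3/4}\|h\|_{L^{1}}$ after taking square roots. Adding the exponentially small contributions from the high-frequency region and the fast-mode low-frequency component yields \eqref{lin-est-sem-imp}. The delicate step is the spectral analysis near $\xi=0$ producing smooth projectors and the orthogonality cancellation above; everything else is routine Fourier manipulation, so I will lean on Appendix \ref{sptr-anal} for that part.
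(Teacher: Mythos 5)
Your proposal is correct and follows essentially the same route as the paper: split frequencies, use the pointwise Kawashima-type bound \eqref{pw-ee-Four} at high frequency, and exploit the spectral decomposition near $\xi=0$ together with the $\mathcal{M}^{\perp}$-orthogonality to gain the extra factor of $|\xi|$ on the slow modes (this is exactly the content of Lemma \ref{sptr-anal-lem}, which the paper formulates via the symmetrized family $\Psi(z)=\Gamma\Phi(z)\Gamma^{-1}$ rather than the $\tilde{A}^{0}$-inner product, an equivalent bookkeeping). The only small gap is that you assert the slow eigenvalues have the form $i\mu_{j}\xi-c_{j}\xi^{2}+O(\xi^{3})$ with $c_{j}>0$ as a given, whereas the paper instead deduces $\Re\lambda_{1\alpha}(i\xi)\le -c\,\xi^{2}/(1+\xi^{2})$ directly from part (iv) of the Equivalence Theorem (genuine coupling), which avoids having to justify the positivity of the second-order coefficient by hand; otherwise the arguments coincide.
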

\begin{proof}
As $\Big( e^{t\Phi}\big( \tiA^{0} \big)^{-1} h \Big)(x)$ is the solution of the linear system \eqref{sys-z-lin} with initial data $\big(\tiA^{0}\big)^{-1}h(x)$, taking Fourier transform and using the pointwise energy estimate \eqref{pw-ee-Four}, we obtain
\begin{equation}\label{est-sem-hf}
\begin{aligned}
\vert e^{t\Phi(i\xi)}\big( \tiA^0 \big)^{-1}\hh(\xi) \vert^{2} &\leq C \exp\left(-\frac{2k\xi^2}{1+\xi^2}t \right) \vert \big( \tiA^0 \big)^{-1} \hh(\xi) \vert^2 \\ &\leq C \exp\left(-\frac{2k\xi^2}{1+\xi^2}t \right) \vert \hh(\xi) \vert^2,
\end{aligned}
\end{equation} 
which holds for all $\xi \in \R$. Now, we invoke Lemma \ref{sptr-anal-lem} in Appendix \ref{sptr-anal}: there exist positive constants $c$, $C$ and $R$ such that
\begin{equation}\label{est-sem-lf}
\vert e^{t\Phi(i\xi)}\big( \tiA^0 \big)^{-1}\hh(\xi) \vert^{2}\leq Ce^{-ct}\vert \hh(\xi) \vert^2 + C\vert \xi \vert^{2}e^{-c\xi^2 t}\vert \hh(\xi) \vert^{2},
\end{equation}
for $\vert \xi \vert \leq R$. Next, we choose and fix an integer $\ell$ such that $0\leq \ell \leq s$, multiply $\vert e^{t\Phi(i\xi)}\big( \tiA^0 \big)^{-1}\hh(\xi) \vert^{2}$ by $\vert \xi \vert^{2\ell}$ and integrate over all $\xi\in \R$ to get
\begin{equation}\label{est-imp}
\begin{aligned}
\int_{\R}\vert \xi \vert^{2\ell}\vert e^{t\Phi(i\xi)}&\big( \tiA^0 \big)^{-1}\hh(\xi) \vert^{2}\, d\xi =  
\int_{\vert \xi \vert \leq R}\vert \xi \vert^{2\ell}\vert e^{t\Phi(i\xi)}\big( \tiA^0 \big)^{-1}\hh(\xi) \vert^{2}\, d\xi + \\ & + \int_{\vert \xi \vert \geq R}\vert \xi \vert^{2\ell}\vert e^{t\Phi(i\xi)}\big( \tiA^0 \big)^{-1}\hh(\xi) \vert^{2}\, d\xi =: J_1 + J_2,
\end{aligned} 
\end{equation} 
with the same $R$ as the one for which estimate \eqref{est-sem-lf} holds.

We estimate $J_2$ first. In this case, for $\vert \xi \vert \geq R$ we have $\exp \big(-\frac{2k\xi^2}{1+\xi^2}t \big)\leq e^{-c_{1}t}$, for some constant constant $c_{1}>0$. This implies that 
\begin{equation}
\label{est-imp-i}
\begin{aligned}
\int_{\vert \xi \vert \geq R}\vert \xi \vert^{2\ell}\vert e^{t\Phi(i\xi)}\big( \tiA^0 \big)^{-1}\hh(\xi) \vert^{2}\, d\xi  
&\leq Ce^{-c_{1}t} \int_{\vert \xi \vert \geq R} \vert \xi \vert^{2\ell}\vert \hh(\xi) \vert^{2}\, d\xi \\ & \leq Ce^{-c_{1}t}\Vert \partial_{x}^{\ell}h \Vert^2,
\end{aligned}
\end{equation}
where we have used estimate \eqref{est-sem-hf}. In the case of $J_1$, we use estimate \eqref{est-sem-lf} to get
\begin{equation}\label{est-imp-ii}
\begin{aligned}
\int_{\vert \xi \vert \leq R}\vert \xi \vert^{2\ell}\vert e^{t\Phi(i\xi)}\big( \tiA^0 \big)^{-1}\hh(\xi) \vert^{2}\, d\xi &\leq  \int_{\vert \xi \vert \leq R} Ce^{-ct}\vert \xi \vert^{2\ell} \vert \hh(\xi) \vert^2\, d\xi  \\ &\ + \int_{\vert \xi \vert \leq R} C\vert \xi \vert^{2(\ell+1)}e^{-c\xi^{2}t}\vert \hh(\xi) \vert^2\, d\xi \\ &\leq Ce^{-ct}\Vert \partial_{x}^{\ell}h \Vert^{2} + C(1+t)^{-(3/2+\ell)}\Vert h \Vert_{L^1}^{2}.
\end{aligned}
\end{equation}
Thus combining \eqref{est-imp}, \eqref{est-imp-i} and \eqref{est-imp-ii} together with the Plancherel's Theorem we get
\[
\Vert \partial_{x}^{\ell}e^{t\Phi}\big(\tiA^{0}\big)^{-1} h \Vert^{2}_{0} \leq Ce^{-ct}\Vert \partial_{x}^{\ell} h \Vert^{2}_{0} + C(1+t)^{-(3/2+\ell)}\Vert h  \Vert_{L^1}^{2},
\]
which implies estimate \eqref{lin-est-sem-imp} after taking the square root. 
\end{proof}
This last estimate is going to play an important role in the establishment of the nonlinear energy estimate on the system \eqref{sys-z}, where we use the fact that $\widetilde{q} \in \mathcal{M}^{\bot}$; see \eqref{tipq-form} and the subsequent Remark \ref{rem:nonconsort}.
%

\section{Global well-posedness and decay rate of solutions}
\label{secnonlinear}

Once the linear decay of solutions from the previous section is obtained, we are in a position to perform the nonlinear energy estimate which, in turn, will allow us to close the a priori energy estimate and to get the decay rate of solutions to the nonlinear problem. We get the estimate for the perturbed variable $Z$, which is also valid for the conserved perturbed variables $W-\bW$ as well as for $V-\bV$, in view  of Proposition \ref{Eq-norm-prop}. Here $W=(\rho, \rho u, \rho E, \eta)$ and $V=(\rho, u, \theta, \eta)$, while $\bW=(\brho,\brho\,\bu, \brho\, \overline{E}, \bet) \in \Or_{\mathrm{eq}}$ is the (unique) constant equilibrium state which corresponds to the constant state $\bV=(\brho, \bu, \bthe,\bet)$ satisfying $\bthe^4 = \bet$, and vice versa; in other words, the invertible map $W = W(V)$ maps $\cU_{\mathrm{eq}}$ into $\Or_{\mathrm{eq}}$ back and forth; see Remark \ref{rem:varV} in Section \ref{subsec:yong}. 

\subsection{Nonlinear energy estimate}
Let us start by taking $\bV=(\brho, \bu, \bthe, \bet) \in \cU_{\mathrm{eq}}$, a constant equilibrium state. As previously explained, we can rewrite system \eqref{neq-rad-hyd-1d} linearized around $\bV$ as in \eqref{sys-z-i} in a new set of variables: 
\begin{equation}\label{sys-z-ii}
\tiA^{0}Z_{t}+\tiA^{1}Z_{x}+\tiL Z = \tiB Z_{xx} + \tilde{g}_{x}+ \tilde{q}.
\end{equation}
Let us take the initial data
\[
V_{0}(x) \in H^{s}(\R)\cap L^{1}(\R)
\]
for system \eqref{neq-rad-hyd-1d}, and consider the unique local solution $V(x,t)$ on the time interval $[0,T_{1}]$ satisfying \eqref{reg-loc-ex-1d} and the estimate \eqref{loc-ex-ee-1d} because of Theorem \ref{loc-ex-th-1d}. By Proposition \ref{Eq-norm-prop}, the initial data $V_{0}(x)$ corresponds to an initial data
\begin{equation}\label{in-dat-z}
Z_{0}(x) \in H^{s}(\R) \cap L^{1}(\R), 
\end{equation}
with $s\geq 3$, for the Cauchy problem \eqref{sys-z-ii}-\eqref{in-dat-z}, with solution
\[
Z(x,t) = D_{U}W(\bU) \big( W(V(x,t))- W(\bV) \big),
\]
satisfying the same regularity and energy estimates as those of $V$; see Theorem \ref{loc-ex-th-1d}. Using the Duhamel's formula we can express the variables $Z(x,t)$ as
\[
Z(x,t)= \left( e^{t \Phi}Z_{0} \right)(x) + \int_{0}^{t}\left( e^{(t-\tau)\Phi}\big( \tiA^{0} \big)^{-1}(\tilde{g}_{x} + \tilde{q} \big) \right)(x)\, d\tau,
\]
with the definition of the semigroup $e^{t \Phi}$ given by \eqref{def-sem}. Then, apply Corollary \ref{lin-dec-cor} to obtain
\begin{equation}\label{nl-ee-i}
\begin{aligned}
\Vert \partial_{x}^{\ell}Z(t) \Vert_0 & \leq \Vert \partial_{x}^{\ell} e^{t \Phi}Z_{0} \Vert_0 + \int_{0}^{t}\Vert \partial_{x}^{\ell}\left( e^{(t-\tau)\Phi} \big( \tiA^{0}\big)^{-1} (\tilde{g}_{x} + \tilde{q} ) \right)(\tau) \Vert_0 \, d\tau  \\ & \leq C e^{-c_{1}t} \Vert \partial_{x}^{\ell}Z_{0} \Vert_0 + C (1+t)^{-(\ell/2 + 1/4)}\Vert Z_{0} \Vert_{L^{1}}  \\ & \quad+ \int_{0}^{t}\Vert \partial_{x}^{\ell}\left( e^{(t-\tau)\Phi} \big( \tiA^{0}\big)^{-1} (\tilde{g}_{x} + \tilde{q} )  \right) (\tau) \Vert_0 \, d\tau.
\end{aligned}
\end{equation}
Let us compute the last term on the right-hand side of \eqref{nl-ee-i}.
First, since the identity
\[
\partial_{x}^{\ell}\big( e^{t \Phi}(\partial_{x}f) \big)(x)= \partial_{x}^{\ell +1}\big( e^{t\Phi} f \big)(x)
\]
holds for $f\in H^{s}(\R)$ and $\ell$ such that $0 \leq \ell+1 \leq s$, we have 
\[
\begin{aligned}
\int_{0}^{t} \Vert \partial_{x}^{\ell}\big( e^{(t-\tau)\Phi} \big( \tiA^{0}\big)^{-1} \tilde{g}_{x}    \big) (\tau) \Vert_0 \, d\tau &= \int_{0}^{t}\Vert \partial_{x}^{\ell+1}\big( e^{(t-\tau)\Phi} \big( \tiA^{0}\big)^{-1} \tilde{g}    \big) (\tau) \Vert_0 \, d\tau   \\ & \leq  C\int_{0}^{t}e^{-c_{1}(t-\tau)}\Vert \partial_{x}^{\ell + 1} \tilde{g} (\tau) \Vert_0 \, d\tau \\ 
&\ + C\int_{0}^{t} (1+(t-\tau))^{-(\ell/2 + 3/4)}\Vert \tilde{g}(\tau)\Vert_{L^{1}}\, d \tau.
\end{aligned}
\]
For the term involving $\tilde{q}$, we are going to use that $\tilde{q} \in \mathcal{M}^{\bot}$, where $\mathcal{M}$ is the null space of $\tiL$. Thus Lemma \ref{lin-dec-shr-lem} implies the estimate
\[
\begin{aligned}
\int_{0}^{t} \Vert \partial_{x}^{\ell}\big( e^{(t-\tau)\Phi} \big( \tiA^{0}\big)^{-1} \tilde{q}    \big) (\tau) \Vert_0 \, d\tau  &\leq  C\int_{0}^{t}e^{-c_{1}(t-\tau)}\Vert \partial_{x}^{\ell}\tilde{q}(\tau) \Vert_0 \, d\tau + \\ & \quad + C\int_{0}^{t} (1+(t-\tau))^{-(\ell/2 + 3/4)}\Vert \tilde{q}(\tau)\Vert_{L^{1}}\, d \tau.
\end{aligned}
\]
Substituting the last two estimates into \eqref{nl-ee-i} yields
\[
\begin{aligned}
\Vert \partial_{x}^{\ell} Z(t) \Vert_0 &\leq Ce^{-c_{1}t}\Vert \partial_{x}^{\ell} Z_{0} \Vert_0 + C(1+t)^{-(\ell/2 +1/4)}\Vert Z_{0} \Vert_{L^{1}}   \\ & \quad + C\int_{0}^{t}e^{-c_{1}(t-\tau)}\left( \Vert \partial_{x}^{\ell + 1}\tilde{g}(\tau) \Vert_0 + \Vert \partial_{x}^{\ell}\tilde{q}(\tau) \Vert_0 \right)\, d \tau  \\ &\quad + C\int_{0}^{t}(1+ (t-\tau))^{-(\ell/2+3/4)} \left( \Vert \tilde{g}(\tau) \Vert_{L^{1}} + \Vert \tilde{q} (\tau) \Vert_{L^{1}} \right)\, d \tau. 
\end{aligned}    
\]
Summing up this last estimate for $\ell=0,\ldots,s-1$ we obtain
\begin{equation}
\label{nl-ee-iia}
\begin{aligned}
\Vert  Z(t) \Vert_{s-1} &\leq Ce^{-c_{1}t}\Vert  Z_{0} \Vert_{s-1} + C(1+t)^{-1/4}\Vert Z_{0} \Vert_{L^{1}} 
\\ & \quad + C\int_{0}^{t}e^{-c_{1}(t-\tau)}\left( \Vert \tilde{g}(\tau) \Vert_{s} + \Vert \tilde{q}(\tau) \Vert_{s-1} \right)\, d \tau 
\\ &\quad + C\int_{0}^{t}(1+ (t-\tau))^{-3/4} \left( \Vert \tilde{g}(\tau) \Vert_{L^{1}} + \Vert \tilde{q} (\tau) \Vert_{L^{1}} \right)\, d \tau. 
\end{aligned}    
\end{equation}

Next we estimate the Sobolev and $L^{1}$ norms of $\tilde{g}$ and $\tilde{g}$ appearing  on the right-hand side of  \eqref{nl-ee-iia}. For this purpose, let us remember that (see \eqref{order-pq})
\[
\begin{aligned}
\tilde{g} & = O \big( \vert W - \bW \vert^{2} \big), \\
\tilde{q} & = O \big( \vert W - \bW \vert \vert \eta_{x} \vert + \vert W - \bW \vert^{2} \big). 
\end{aligned}
\]
Since we are assuming $s\geq 3$, we can use the Banach algebra properties of $H^{s}(\R)$ and the Sobolev calculus inequalities (see, e.g., Theorem 7.77 in \cite{IoIo01} and Lemma 3.2 in \cite{HaLi96a}) to obtain
\[
\begin{aligned}
\Vert \vert W - \bW \vert^{2} \Vert_{s} \leq C \Vert W- \bW \Vert_{s}\Vert W - \bW \Vert_{L^{\infty}} &\leq C \Vert W - \bW \Vert_{s} \Vert W - \bW \Vert_{1}  \\ &\leq C \Vert W - \bW \Vert_{s}\Vert W - \bW \Vert_{s-1}, 
\end{aligned}
\]
and
\[
\begin{aligned}
\Vert \vert W - \bW \vert \vert \eta_{x} \vert \Vert_{s-1} & \leq C \left( \Vert W -\bW \Vert_{s-1}\Vert \eta_{x} \Vert_{L^{\infty}} + \Vert \eta_{x} \Vert_{s-1}\Vert W -\bW \Vert_{L^{\infty}}  \right) \\ & \leq C \left( \Vert W -\bW \Vert_{s-1}\Vert \eta_{x} \Vert_{1} + \Vert \eta_{x} \Vert_{s-1}\Vert W -\bW \Vert_{1} \right) \\ &\leq C  \Vert W -\bW \Vert_{s-1}\Vert \eta_{x} \Vert_{s-1},
\end{aligned}
\]
so that we have
\begin{equation}\label{nl-tr-norm}
\begin{aligned}
\Vert \tilde{g}(\tau) \Vert_{s} \leq C \Vert (W - \bW)(\tau) \Vert_{s-1}\Vert (W - \bW)(\tau) \Vert_{s}, \\
\Vert \tilde{q}(\tau) \Vert_{s-1} \leq C \Vert (W - \bW)(\tau) \Vert_{s-1}\Vert (W-\bW)(\tau) \Vert_{s}. 
\end{aligned}
\end{equation}
In addition, it is easy to verify that
\begin{equation}\label{nl-tr-norm-L1}
\begin{aligned}
\Vert \tilde{g}(\tau) \Vert_{L^{1}} &\leq C \Vert (W-\bW)(\tau) \Vert^{2}_{0} \leq C \Vert (W- \bW)(\tau) \Vert_{s-1}^{2}, \\
\Vert \tilde{q}(\tau) \Vert_{L^{1}} & \leq C \Vert (W - \bW)(\tau) \Vert_{1}^{2} \leq C \Vert (W-\bW)(\tau) \Vert_{s-1}^{2}. 
\end{aligned}
\end{equation}
Thus, using estimates \eqref{nl-tr-norm} and \eqref{nl-tr-norm-L1} in estimate \eqref{nl-ee-iia}, we are lead to
\begin{equation}\label{nl-ee-iii}
\begin{aligned}
\Vert  Z(t) \Vert_{s-1} &\leq Ce^{-c_{1}t}\Vert  Z_{0} \Vert_{s-1} + C(1+t)^{-1/4}\Vert Z_{0} \Vert_{L^{1}}   \\ & \quad + C\int_{0}^{t}e^{-c_{1}(t-\tau)} \Vert (W-\bW)(\tau) \Vert_{s-1}\Vert (W-\bW)(\tau) \Vert_{s} \, d \tau 
\\ &\quad + C\int_{0}^{t}(1+ (t-\tau))^{-3/4} \Vert (W -\bW)(\tau) \Vert_{s-1}^{2} \, d \tau. 
\end{aligned}    
\end{equation}
Next we apply Proposition \ref{Eq-norm-prop} to recast the estimate above in terms of the original perturbation variables $V-\bV$: 
\begin{equation}\label{nl-ee-V}
\begin{aligned}
\Vert  (V-\bV)(t) \Vert_{s-1} &\leq C(1+t)^{-1/4} \left( \Vert  V_{0}-\bV \Vert_{s-1} +\Vert V_{0}-\bV \Vert_{L^{1}} \right) 
\\ & \quad + C\sup_{0 \leq \tau \leq t}\Vert (V-\bV)(\tau) \Vert_{s} \int_{0}^{t}e^{-c_{1}(t-\tau)} \Vert (V-\bV)(\tau) \Vert_{s-1} \, d \tau 
\\ &\quad + C\int_{0}^{t}(1+ (t-\tau))^{-3/4} \Vert (V -\bV)(\tau) \Vert_{s-1}^{2} \, d \tau. 
\end{aligned}    
\end{equation}
Now, let us  define 
\[
\vertiii{(V-\bV)(t)}_{s}:= \sup_{0 \leq \tau \leq t} (1+t)^{1/4} \Vert (V-\bV)(\tau) \Vert_{s-1}. 
\]
With this notation, from estimate \eqref{nl-ee-V} we obtain
\begin{equation}\label{alm-nl-dec-i}
\begin{aligned}
\vertiii{(V-\bV)(t)}_{s} &\leq C \left( \Vert V_{0}-\bV \Vert_{s-1} + \Vert V_{0}- \bV \Vert_{L^{1}} \right) + C \mu_{1}(t) \vertiii{(V - \bV)(t)}_{s}E_{s}(t) 
\\ & \quad + C \mu_{2}(t)\vertiii{(V-\bV)(t)}_{s}^{2},  
\end{aligned}
\end{equation}
where $\mu_{1}(t)$ and $\mu_{2}(t)$ are given by
\[
\begin{aligned}
\mu_{1}(t) &:= \sup_{0 \leq \tau \leq t}(1+\tau)^{1/4}\int_{0}^{\tau}e^{-c_{1}(\tau-\tau_{1})}(1+\tau_{1})^{-1/4}\, d \tau_{1},  \\
\mu_{2}(t) &:= \sup_{0 \leq \tau \leq t} (1+\tau)^{1/4}\int_{0}^{\tau} (1+\tau-\tau_{1})^{-3/4}(1+\tau_{1})^{-1/2}\, d\tau_{1}. 
\end{aligned}
\]
As $\mu_{1}(t)$ and $\mu_{2}(t)$ are uniformly bounded in $t$ (see, for example, Lemma A.1 in \cite{PlV22}), we can rewrite \eqref{alm-nl-dec-i} as
\begin{equation}\label{alm-nl-dec-ii}
\begin{aligned}
\vertiii{(V-\bV)(t)}_{s} &\leq C \left( \Vert V_{0}-\bV \Vert_{s-1} + \Vert V_{0}- \bV \Vert_{L^{1}} \right) + C  \vertiii{(V - \bV)(t)}_{s}E_{s}(t)  \\ &\quad + C \vertiii{(V-\bV)(t)}_{s}^{2}. 
\end{aligned}
\end{equation}
Hence we have proved the following result. 

\begin{proposition}
\label{nl-dec-prop}
Assume hypotheses \hyperref[H1]{\rm{(H$_1$)}} -- \hyperref[H3]{\rm{(H$_3$)}}. Let $V(x,t)$ be the local solution on the time interval $[0,T]$ of the Cauchy problem for system \eqref{neq-rad-hyd-1d} with initial data $V_{0}(x)$ satisfying $V_{0}-\bV \in \big(H^{s}(\R) \cap L^{1}(\R)\big)^4$, $s\geq 3$, and define
\[
\Vert V_{0} - \bV \Vert_{k,1}:= \Vert V_{0}-\bV \Vert_{k} + \Vert V_{0} - \bV \Vert_{L^{1}},
\]
for $0 \leq k \leq s$. Then there exist positive constants $a_{1} (\leq a_{0})$ (with $a_{0}$ as in Lemmata \ref{lem-apr-ee-1} and \ref{lem-apr-ee-2}) and $\delta_{1}=\delta_{1}(a_{1})$ such that if $N_{s}(T)\leq a_{1}$ and $\Vert V_{0}- \bV \Vert_{s-1,1}\leq \delta_{1}$, then the estimate
\begin{equation}\label{nl-dec}
\Vert (V-\bV)(t) \Vert_{s-1} \leq C_{3} (1+t)^{-1/4}\Vert V_{0} - \bV \Vert_{s-1,1}    
\end{equation}
holds for all $t \in [0,T]$, with a positive constant $C_{3}=C_{3}(a_{1},\delta_{1})$. 
\end{proposition}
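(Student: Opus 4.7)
The plan is to run a bootstrap/continuity argument starting from the nonlinear integral inequality \eqref{alm-nl-dec-ii} derived just above. Set $X(t) := \vertiii{(V-\bV)(t)}_{s}$ and $K := \Vert V_{0} - \bV \Vert_{s-1,1}$. The a priori smallness assumption $N_{s}(T) \leq a_{1}$ immediately gives $E_{s}(t) \leq N_{s}(T)^{2} \leq a_{1}^{2}$ for all $t \in [0,T]$, so \eqref{alm-nl-dec-ii} reduces to
\[
X(t) \leq C K + C a_{1}^{2}\, X(t) + C X(t)^{2}, \qquad t \in [0,T],
\]
where $C$ depends only on $\bV$ and $a_{0}$, not on $T$.

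The first step is to fix $a_{1} \leq a_{0}$ small enough that $C a_{1}^{2} \leq 1/2$. The linear mixed term can then be absorbed on the left to yield
\[
X(t) \leq 2 C K + 2 C X(t)^{2}, \qquad t \in [0,T].
\]
This is a quadratic inequality in $X(t)$ whose roots, for $K$ small, split into a ``small'' root of order $O(K)$ and a ``large'' root of order $O(1)$. The goal is to trap $X(t)$ below the small root for all $t \in [0,T]$.

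The second step is a standard continuation argument. Since $V - \bV \in C([0,T]; H^{s-1}(\R)^{4})$ by Theorem \ref{loc-ex-th-1d} and the weight $(1+\tau)^{1/4}$ is continuous, the map $t \mapsto X(t)$ is continuous. Fix the barrier $M := 1/(4C)$ and define $T^{\star} := \sup\{t \in [0,T] : X(t) \leq M\}$. Since $X(0) = \Vert V_{0} - \bV \Vert_{s-1} \leq K \leq \delta_{1}$, choosing $\delta_{1} < M$ guarantees $T^{\star} > 0$. For any $t \in [0,T^{\star}]$, the bound $X(t) \leq 1/(4C)$ implies $2 C X(t)^{2} \leq \tfrac{1}{2} X(t)$, and the quadratic inequality above gives $X(t) \leq 4 C K \leq 4 C \delta_{1}$. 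A further reduction $\delta_{1} \leq 1/(32 C^{2})$ then forces $4 C \delta_{1} < M$ strictly, and the continuity of $X$ forbids $T^{\star} < T$ — otherwise $\{X \leq M\}$ would extend to a right neighborhood of $T^{\star}$, contradicting the maximality. Hence $T^{\star} = T$ and $X(t) \leq 4 C K$ on all of $[0,T]$, which is exactly \eqref{nl-dec} with $C_{3} = 4 C$.

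The only genuinely delicate point is verifying that the constant $C$ in \eqref{alm-nl-dec-ii} is independent of $T$, so that $a_{1}$ and $\delta_{1}$ can be chosen uniformly in the length of the lifespan. This is transparent from the preceding derivation: $C$ is assembled from the linear decay estimates of Corollary \ref{lin-dec-cor} and Lemma \ref{lin-dec-shr-lem}, the Sobolev/Moser product inequalities used in \eqref{nl-tr-norm}--\eqref{nl-tr-norm-L1}, and the uniform-in-$t$ bounds on the convolution integrals $\mu_{1}, \mu_{2}$, none of which depends on the length of the interval $[0,T]$.
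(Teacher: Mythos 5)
Your proposal is correct and follows exactly the route the paper intends: it takes the nonlinear integral inequality \eqref{alm-nl-dec-ii} (which the paper derives in full before the statement of the proposition and then considers the result as proved) and closes it by the standard quadratic/bootstrap continuity argument. The only omitted micro-detail is that one should normalize $C\ge1$ (which is harmless) so that the choice $\delta_1\le 1/(32C^2)$ automatically also enforces $\delta_1<M=1/(4C)$; everything else — the absorption of the mixed term via $E_s(t)\le N_s(T)^2\le a_1^2$, the barrier $M$, the continuity of $t\mapsto\vertiii{(V-\bV)(t)}_s$ inherited from Theorem \ref{loc-ex-th-1d}, and the $T$-independence of the constants — is stated accurately.
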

As a consequence of the proposition above we obtain the a priori energy estimate that we were missing. 
\begin{corollary}\label{cor-miss-apr-ee}
Under the same assumptions of Proposition \ref{nl-dec-prop}, the estimate 
\begin{equation}\label{miss-apr-ee}
\Vert (V - \bV)(\tau) \Vert_{s-1}^{2} + \int_{0}^{t} \Vert \partial_{x}V (\tau) \Vert_{s-2}^{2}\, d \tau \leq C_{4} \Vert V_{0} - \bV \Vert_{s-1,1}^{2} 
\end{equation}
holds for all $t \in [0,T]$ and some positive constant $C_{4}=C_{4}(a_{1},\delta_{1})$.
\end{corollary}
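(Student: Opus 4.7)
The strategy is to combine the pointwise decay estimate \eqref{nl-dec} of Proposition \ref{nl-dec-prop} with the energy identities from Lemmata \ref{lem-apr-ee-1} and \ref{lem-apr-ee-2}, and to close the remaining low-order dissipation by a Gagliardo--Nirenberg interpolation together with an enhanced decay rate for the first spatial derivative.

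First I would dispose of the supremum term. Directly from \eqref{nl-dec} one obtains
\[
\Vert(V-\bV)(\tau)\Vert_{s-1}^{2} \leq C_{3}^{2}(1+\tau)^{-1/2}\Vert V_{0}-\bV\Vert_{s-1,1}^{2} \leq C_{3}^{2}\Vert V_{0}-\bV\Vert_{s-1,1}^{2}
\]
uniformly in $\tau\in[0,T]$, which already yields the first half of the claimed estimate.

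For the integrated term I would follow the construction indicated in the Remark after Lemma \ref{lem-apr-ee-2}: choose $\alpha\in(0,1/C_{2})$, multiply \eqref{apr-ee-2} by $\alpha$ and add it to \eqref{apr-ee-1} in order to arrive at
\[
\Vert \partial_{x}V(t)\Vert_{s-1}^{2}+\int_{0}^{t}\Vert \partial_{x}^{2}V(\tau)\Vert_{s-2}^{2}\,d\tau \leq C\bigl(\Vert \partial_{x}V_{0}\Vert_{s-1}^{2}+N_{s}(T)^{3}\bigr).
\]
Under the smallness assumption $N_{s}(T)\leq a_{1}$ the cubic term $N_{s}(T)^{3}\leq a_{1}N_{s}(T)^{2}$ is absorbable, while $\Vert \partial_{x}V_{0}\Vert_{s-1}\leq\Vert V_{0}-\bV\Vert_{s}$ is controlled by the $H^{s-1}\cap L^{1}$ initial smallness. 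Splitting $\Vert \partial_{x}V\Vert_{s-2}^{2}=\Vert \partial_{x}V\Vert_{0}^{2}+\Vert \partial_{x}^{2}V\Vert_{s-3}^{2}$, every contribution containing two or more space derivatives is then absorbed in the right-hand side, so all that remains is to bound $\int_{0}^{t}\Vert \partial_{x}V(\tau)\Vert_{0}^{2}\,d\tau$, which is exactly the ``missing term'' flagged in the Remark after Lemma \ref{lem-apr-ee-2}.

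For that low-order piece I would apply the one-dimensional Gagliardo--Nirenberg inequality $\Vert \partial_{x}V\Vert_{0}^{2}\leq C\Vert V-\bV\Vert_{0}\,\Vert \partial_{x}^{2}V\Vert_{0}$, coupled with Young's inequality, and invoke the enhanced pointwise rate $\Vert \partial_{x}V(t)\Vert_{0}\leq C(1+t)^{-3/4}\Vert V_{0}-\bV\Vert_{s-1,1}$. The latter is obtained by re-running the Duhamel argument of Proposition \ref{nl-dec-prop} at the level $\ell=1$, using the sharper linear estimate of Lemma \ref{lin-dec-shr-lem} for the source $\widetilde{q}$ (legitimate because $\widetilde{q}\in\mathcal{M}^{\perp}$, see Remark \ref{rem:nonconsort}), and controlling the Sobolev and $L^{1}$ norms of $\widetilde{g}$, $\widetilde{q}$ by the already-established $(1+\tau)^{-1/4}$ decay of $\Vert V-\bV\Vert_{s-1}$. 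Squaring and integrating this enhanced rate then yields $\int_{0}^{t}\Vert \partial_{x}V(\tau)\Vert_{0}^{2}\,d\tau\leq C\Vert V_{0}-\bV\Vert_{s-1,1}^{2}$ uniformly in $t$, completing the proof. The main obstacle lies precisely in this last step: the rate $(1+\tau)^{-1/4}$ from Proposition \ref{nl-dec-prop} is by itself borderline insufficient to render $\Vert V-\bV\Vert_{0}^{2}$ integrable in time, so the argument is forced to exploit the $\mathcal{M}^{\perp}$ structure of the nonconservative source $\widetilde{q}$ together with the improved Lemma \ref{lin-dec-shr-lem} to upgrade the decay of $\Vert \partial_{x}V\Vert_{0}$ to $(1+t)^{-3/4}$, at which point $\Vert \partial_{x}V\Vert_{0}^{2}$ finally becomes integrable on $[0,\infty)$ uniformly in the time horizon.
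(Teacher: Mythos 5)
Your strategy correctly identifies the key mechanism, which is also the one used in the paper: to control the integrated term one needs the \emph{enhanced} $(1+t)^{-3/4}$ decay of $\|\partial_x V\|$, obtained by re-running the Duhamel argument at level $\ell \geq 1$ and exploiting the improved linear estimate of Lemma \ref{lin-dec-shr-lem} together with the $\mathcal{M}^{\perp}$ structure of $\widetilde{q}$ (Remark \ref{rem:nonconsort}). However, the way you combine this with the a priori estimates of Lemmata \ref{lem-apr-ee-1} and \ref{lem-apr-ee-2} introduces a genuine gap.

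The problem is the assertion that ``$\Vert \partial_{x}V_{0}\Vert_{s-1}\leq\Vert V_{0}-\bV\Vert_{s}$ is controlled by the $H^{s-1}\cap L^{1}$ initial smallness.'' This is false: $\|\partial_x V_0\|_{s-1}^2 = \sum_{k=1}^{s}\|\partial_x^k V_0\|_0^2$ includes the top-order derivative of order $s$, which is \emph{not} controlled by $\|V_0-\bV\|_{s-1}$ or $\|V_0-\bV\|_{L^1}$. Consequently, once you invoke the combined energy inequality of Lemmata \ref{lem-apr-ee-1} and \ref{lem-apr-ee-2} to absorb the piece $\int_0^t\|\partial_x^2 V\|_{s-3}^2\,d\tau$, the right-hand side of your estimate is $C(\|V_0-\bV\|_s^2 + N_s(T)^3)$, which cannot be reduced to the $C_4\|V_0-\bV\|_{s-1,1}^2$ demanded by \eqref{miss-apr-ee}. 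Your proposal would therefore only prove a weakened version of the corollary with $\|V_0-\bV\|_s$ on the right. (The Gagliardo--Nirenberg/Young interlude is also a dead end: even with Young's inequality, the residual $\int_0^t\|V-\bV\|_0^2\,d\tau$ only sees the $(1+\tau)^{-1/2}$ rate, which is not integrable, so you are forced onto the enhanced-decay route anyway.)

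The paper's proof sidesteps the energy lemmata entirely. It goes back to the Duhamel-based inequality \eqref{nl-ee-iia}, sums over $\ell=1,\ldots,s-1$ (rather than $\ell=0,\ldots,s-1$), and obtains the enhanced rate for the \emph{full} derivative norm:
\[
\Vert \partial_{x}V(\tau) \Vert_{s-2} \leq C(1+\tau)^{-3/4} \Vert V_{0}- \bV \Vert_{s-1,1},
\]
where the right-hand side involves only $\|Z_0\|_{s-1}$ and $\|Z_0\|_{L^1}$ (since the lowest summand carries the dominant rate $(1+\tau)^{-3/4}$ and the $e^{-c_1 t}$ term only sees $\|\partial_x Z_0\|_{s-2}\leq\|Z_0\|_{s-1}$). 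Squaring and integrating then gives the bound for $\int_0^t\|\partial_x V\|_{s-2}^2\,d\tau$ directly, with the correct norm on the right and without any splitting or absorption. If you replace the use of Lemmata \ref{lem-apr-ee-1} and \ref{lem-apr-ee-2} by this one-line extension of your own Duhamel computation at $\ell=1$ to all $\ell\in\{1,\ldots,s-1\}$, your proof becomes both correct and identical to the paper's.
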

\begin{proof}
We follow the same steps to get estimate \eqref{nl-dec}. At the point where we obtain estimate \eqref{nl-ee-iia}, we sum up from $\ell = 1$ to $\ell = s-1$ to obtain the estimate
\[
\Vert \partial_{x}V(\tau) \Vert_{s-2} \leq C_{1}(1+t)^{-3/4} \Vert V_{0}- \bV \Vert_{s-1,1}.
\]
Then, by taking the square of this estimate and integrating on time we arrive at
\[
\int_{0}^{t} \Vert \partial_{x} V (\tau) \Vert_{s-2}^{2}\, d \tau \leq C \Vert V_{0} - \bV \Vert_{s-1,1}^{2},
\]
where $C$ is a positive constant uniform in $t$. Here we have used that the function $(1+t)^{-3/2}$ is integrable in $[0,\infty)$. Finally, combining the last estimate and estimate \eqref{nl-dec} we obtain the result.  
\end{proof}

\subsection{Global decay rate of small perturbations and proof of Theorem \ref{thmgloex}}
Up to this point, we are almost ready to prove the main result of the paper: the global existence and asymptotic decay of small perturbations of constant state solutions to   system \eqref{neq-rad-hyd-1d}. For this, we only need the appropriate a priori energy estimate to perform the standard continuation argument of the local solution. This estimate is a direct consequence of Lemmata \ref{lem-apr-ee-1} and \ref{lem-apr-ee-2} and Corollary \ref{cor-miss-apr-ee}, and it is the content of the next result.

\begin{corollary}\label{fin-apr-ee-cor}
Let $V(x,t)$ be the local solution on $[0,T]$ of the initial value problem of system \eqref{neq-rad-hyd-1d} with initial data $V_0$ satisfying $V_{0}-\bV \in \big(H^{s}(\R) \cap L^{1}(\R)\big)^4$, $s \geq 3$, with the regularity \eqref{reg-loc-ex-1d} from the local existence Theorem \ref{loc-ex-th-1d}. Then there exist positive constants $a_{2} (\leq a_{1})$ and $C_{5}=C_{5}(a_{2},\delta_{1})$, with $a_{1}$ and $\delta_{1}$ as in Proposition \ref{nl-dec-prop}, such that if $N_{s}(T)\leq a_{2}$ and $\Vert V_{0}- \bV \Vert_{s-1,1}\leq \delta_{1}$, then the estimate 
\begin{equation}
\label{fin-apr-ee}
\sup_{0 \leq \tau \leq t} \Vert (V-\bV)(\tau) \Vert_{s}^{2} + \int_{0}^{t} \Vert \partial_{x}(\rho, u, \theta)(\tau) \Vert_{s-1}^{2} + \Vert \partial_{x}\eta(\tau) \Vert_{s}^{2}\, d \tau \leq C_{5} \Vert V_{0} - \bV \Vert_{s,1}^{2},
\end{equation}
holds for all $t \in [0,T]$. 
\end{corollary}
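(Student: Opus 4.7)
The plan is to combine the three previously-established a priori estimates \eqref{apr-ee-1}, \eqref{apr-ee-2}, and \eqref{miss-apr-ee} into a single inequality controlling the quantity $N_s(t)^2 = E_s(t) + F_s(t)$, and then to absorb the resulting cubic remainder $N_s(T)^3$ by exploiting the smallness hypothesis $N_s(T) \leq a_2$.

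First, I would merge Lemmata \ref{lem-apr-ee-1} and \ref{lem-apr-ee-2}. Rewriting \eqref{apr-ee-2} as a bound for the integral $\int_0^t\|\partial_x^2(\rho,u,\theta)(\tau)\|_{s-2}^2\,d\tau$ in terms of the left-hand side of \eqref{apr-ee-1} plus $C_2(\|\partial_x V_0\|_{s-1}^2 + N_s(T)^3)$, and then substituting \eqref{apr-ee-1} itself for the former, yields
\[
\|\partial_x V(t)\|_{s-1}^2 + \int_0^t \Bigl( \|\partial_x^2(\rho,u,\theta)\|_{s-2}^2 + \|\partial_x^2 \eta\|_{s-1}^2 \Bigr) d\tau \leq C\bigl( \|\partial_x V_0\|_{s-1}^2 + N_s(T)^3 \bigr),
\]
for some $C = C(a_0)$. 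Adding \eqref{miss-apr-ee} to this inequality and using the elementary one-dimensional bounds $\|V-\bV\|_{s-1}^2 + \|\partial_x V\|_{s-1}^2 \geq \|V-\bV\|_s^2$, $\|\partial_x V\|_{s-2}^2 + \|\partial_x^2(\rho,u,\theta)\|_{s-2}^2 \geq \|\partial_x(\rho,u,\theta)\|_{s-1}^2$, and $\|\partial_x V\|_{s-2}^2 + \|\partial_x^2 \eta\|_{s-1}^2 \geq \|\partial_x \eta\|_s^2$ produces
\[
\|(V-\bV)(t)\|_s^2 + \int_0^t \Bigl( \|\partial_x(\rho,u,\theta)\|_{s-1}^2 + \|\partial_x \eta\|_s^2 \Bigr) d\tau \leq C'\bigl( \|V_0 - \bV\|_{s,1}^2 + N_s(T)^3 \bigr).
\]

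Applying this inequality at every $\tau \in [0,t]$ and taking the supremum on the left-hand side (permissible because the right-hand side is nondecreasing in $\tau$) gives $N_s(t)^2 \leq C'\bigl(\|V_0-\bV\|_{s,1}^2 + N_s(T)^3\bigr)$. The only genuine obstacle is then to control the cubic remainder $N_s(T)^3$: using the smallness hypothesis we bound $N_s(T)^3 \leq a_2 N_s(T)^2$, and specializing the previous inequality to $t = T$ yields $(1 - C'a_2) N_s(T)^2 \leq C' \|V_0 - \bV\|_{s,1}^2$; fixing $a_2 \leq a_1$ small enough that $C' a_2 \leq 1/2$ therefore produces $N_s(T)^2 \leq 2C' \|V_0-\bV\|_{s,1}^2$. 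Since $N_s(t) \leq N_s(T)$ for every $t \in [0,T]$, the estimate \eqref{fin-apr-ee} follows with $C_5 = 2 C' = C_5(a_2, \delta_1)$. This absorption of the cubic term, rather than any subtle calculation, is the heart of the argument: it is precisely the reason the local smallness constant $a_2$ was introduced in the first place.
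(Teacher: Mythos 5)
Your proposal is correct and follows essentially the same strategy as the paper: combine the three a priori estimates \eqref{apr-ee-1}, \eqref{apr-ee-2}, \eqref{miss-apr-ee} to control $N_s(t)^2$ by $C(\|V_0-\bV\|_{s,1}^2 + N_s(T)^3)$, then absorb the cubic term by the smallness hypothesis $N_s(T)\leq a_2$. The only cosmetic difference is the way the negative terms in \eqref{apr-ee-2} are eliminated — you rearrange \eqref{apr-ee-2} and substitute the bound of \eqref{apr-ee-1} into it, whereas the paper takes the linear combination $\eqref{apr-ee-1} + \alpha\eqref{apr-ee-2} + \eqref{miss-apr-ee}$ with $\alpha C_2 < 1$ — but these are algebraically interchangeable.
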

\begin{proof}
Combine the estimates given by Lemmata \ref{lem-apr-ee-1} and \ref{lem-apr-ee-2} and that of Corollary \ref{cor-miss-apr-ee} in the form \eqref{apr-ee-1} + $\alpha$ \eqref{apr-ee-2} + \eqref{miss-apr-ee} for some $\alpha>0$ satisfying $\alpha C_{2} < 1$. Thus we are lead to
\[
\begin{aligned}
N_{s}(t)^{2} &\leq \sup_{0\leq \tau\leq t}\Vert (V-\bV)(\tau) \Vert_{s-1}^2+\sup_{0\leq \tau\leq t}\Vert \partial_{x}V(\tau) \Vert_{s-1}^{2} 
 \\ &\quad + \int_{0}^{t}\big( \Vert \partial_{x}^{2}(\rho, u,\theta)(\tau)\Vert_{s-2}^{2}+ \Vert \partial_{x}^{2}\eta(\tau)\Vert_{s-1}^{2}\big)d\tau 
    \\&\quad + \int_{0}^{t}\Vert \partial_{x}V(\tau) \Vert_{s-2}^{2}\, d \tau 
     \\ & \leq C \left( \Vert V_{0}- \bV \Vert_{s,1}^{2} + N_{s}(T)^{3} \right), 
\end{aligned}
\]
from which we get \eqref{fin-apr-ee} by taking $N_{s}(T)$ small enough. 
\end{proof}

\begin{proof}[Proof of Theorem \ref{thmgloex}]

Follows directly from the local existence Theorem \ref{loc-ex-th-1d}, Proposition \ref{nl-dec-prop}, Corollary \ref{fin-apr-ee-cor} and a standard continuation argument; for further details, see the proof of Theorem 6.1 in \cite{PlV22} or that of Theorem 5.1 in \cite{PlV2}.

\end{proof}

\section{Discussion}
\label{secdicussion}

In this paper we have studied a non-equilibrium diffusion limit system of equations, derived by Buet and Despr\'es \cite{BuDe04}, which describes the dynamics of a non-relativistic, strongly radiative inviscid fluid. The system can be viewed as a singular limit in the non-equilibrium diffusion regime, that is, when the temperature of radiation is a priori different from the fluid temperature. The radiation appears through an extra equation of parabolic type for the radiative temperature. This parabolic term, together with the relaxation term (which also comes from the equation for radiative energy), are the only dissipative mechanism within the system. In previous works, damping, viscous or heat conduction effects for the fluid have been incorporated into the equations in order to show global existence of perturbations of (and stability of) constant states. Up to our knowledge, this is the first contribution addressing this issue for the original (inviscid, non-heat-conducting and without damping) set of equations proposed in \cite{BuDe04}. We show the global existence and decay in time of perturbations of constant equilibrium states for the system in one space dimension.

Instead of working directly with the model equations and of performing \emph{ad hoc} energy estimates, we adopted an abstract methodology that involves the strict dissipativity and the linear decay structure of the system, which can be extrapolated to the nonlinear problem. This method falls under the framework of the classical work by Kawashima and Shizuta \cite{KaSh88a,ShKa85} for systems of hyperbolic-parabolic type. For this purpose, we proved that the entropy function identified by Buet and Despr\'es \cite{BuDe04} can be used to symmetrize the system and to recast the problem in terms of new perturbation variables. The latter and the resulting system of equations play a crucial role in the establishment of the linear decay of the associated semigroup, based on the genuine coupling condition in one space dimension. In addition, we proved that for dimensions $d \geq 2$ the system fails to be genuinely coupled; see Proposition \ref{propnongc} below. This fact justifies the application of the methods to the system in one dimension only. The lack of dissipative terms such as material viscosities, the lack of damping terms, as well as the non-genuinely coupled nature of the system, make the multi-dimensional model worthy of further investigations.


\section*{Acknowledgements}

J. M. Valdovinos is grateful to the Department of Engineering, Information Sciences and Mathematics of the University of L'Aquila for their hospitality during a research visit when this work was initiated.
The work of C. Lattanzio is partially supported by INdAM--GNAMPA. The work of R. G. Plaza was supported by SECIHTI, Mexico, grant ``Ciencia de Frontera'' CF-2023-G-122. The work of J. M. Valdovinos was supported by SECIHTI, Mexico, Program ``Ayudantes de Investigaci\'on 2025" of the Sistema Nacional de Investigadoras e Investigadores.

\appendix
\section{Local existence in several space dimensions}
\label{app1}

In this appendix we state the local (in time) well-posedness for the Cauchy problem of the non-equilibrium system \eqref{neq-rad-hyd} in any space dimension $d \geq 1$ in the perturbation framework around a constant equilibrium state, provided that the initial data is sufficiently smooth and close to the constant equilibrium state. This local existence result follows directly from a previous theorem by Kawashima \cite{KaTh83}. Here we verify that the generic radiation system \eqref{neq-rad-hyd}  can be put into the non-homogeneous quasilinear form needed in \cite{KaTh83}. For this purpose, first we use  the continuity equation to simplify the momentum equation as follows:  
\[
\rho \partial_{t}\bm{u} + \rho(\nabla \bm{u})\bm{u} + \nabla \big( p + \tfrac{1}{3}\eta) = 0.
\]
Next, take the inner product of the resulting momentum equation with the velocity field $\bm{u}$ to obtain
\begin{equation}\label{simp-mom-eq}
\rho \partial_{t}\big( \tfrac{1}{2} \vert \bm{u} \vert^{2} \big) + \rho \nabla \big( \tfrac{1}{2}\vert \bm{u} \vert^{2} \big) \cdot \bm{u} + \nabla p \cdot \bm{u} = - \tfrac{1}{3}\nabla \eta  \cdot \bm{u}. 
\end{equation} 
Let us recast the energy equations as
\[
\partial_{t}( \rho E ) + \nabla \cdot \big( \big( \rho E  + p  \big) \bm{u} \big)= \nabla \cdot \big( \tfrac{1}{3\sigma_{s}}\nabla \eta \big)- \big( \partial_{t}\eta + \nabla \cdot \big( \eta\bm{u} + \tfrac{1}{3}\eta \bm{u} \big) \big).
\]
The latter can be simplified, using the continuity equation, the equation for the radiation intensity and the fact that $E=e+\tfrac{1}{2}\vert \bm{u} \vert^{2}$, into  
\[
\rho \partial_{t} \big( e + \tfrac{1}{2}\vert \bm{u} \vert^{2} \big) + \rho \nabla \big( e + \tfrac{1}{2} \vert \bm{u} \vert^{2} \big) \cdot \bm{u} + \nabla p \cdot \bm{u} + p \nabla \cdot \bm{u} = \sigma_{a}( \eta- \theta^{4} ) - \tfrac{1}{3}\nabla \eta \cdot \bm{u}. 
\]
Now subtract equation \eqref{simp-mom-eq} from the last equation to obtain
\[
\rho \partial_{t}  e  + \rho \nabla  e \cdot \bm{u}+ p \nabla \cdot \bm{u} = \sigma_{a}( \eta- \theta^{4} ).
\]
In this last equation we use again the continuity equation and   $e_{t}=e_{\rho}\rho_{t} + e_{\theta}\theta_{t}$ and $\nabla e = e_{\rho}\nabla \rho + e_{\theta}\nabla \theta$ (we are assuming that the internal energy is a function of the density and temperature) to arrive at
\[
\rho e_{\theta}\big( \partial_{t}\theta +  \nabla \theta \cdot \bm{u} \big) + \big( p - \rho^{2}e_{\rho} \big) \nabla \cdot \bm{u} = \sigma_{a}( \eta - \theta^{4}).
\]
Finally,  using the thermodynamic relation $p- \rho^{2}e_{\rho}= \theta p_{\theta}$ in the last expression, we arrive to the following 
 quasilinear form of system \eqref{neq-rad-hyd} 
\begin{equation}\label{neq-rad-hyd-quas}
A^{0}(V)V_{t} + \sum_{j=1}^{d}A^{j}(V)V_{x_j} + Q(V) = \sum_{i,j=1}^{d}B^{jk}(V)V_{x_{j}x_{k}},
\end{equation}
where $V = (\rho, \bm{u}, \theta, \eta) \in \R^{d+3}$, $\bm{u}=(u_{1}, \ldots, u_{d}) \in \R^d$, and
\begin{equation}\label{A0-Q}
A^{0}(V) = \begin{pmatrix}
1 & 0_{1\times d} & 0 & 0 \\ 0_{d\times 1} & \rho\: I_{d} & 0_{d \times 1} & 0_{d\times 1} \\ 0 & 0_{1\times d} & \rho \: e_{\theta} & 0 \\ 0 & 0_{1\times d} & 0 & 1
\end{pmatrix},\quad Q(V)= \begin{pmatrix}
0 \\ 0_{d\times 1} \\ \sigma_{a}(\theta^{4} -\eta) \\ \sigma_{a}(\eta-\theta^{4}) 
\end{pmatrix}.
\end{equation}
The first- and second-order matrix coefficients are given in terms of their symbols: 
\begin{equation}\label{sym-A}
\sum_{j=1}^{d}A^{j}(V)\xi_{j} = \begin{pmatrix}
\bm{u}\cdot \xi & \rho \: \xi  & 0 & 0 \\ p_{\rho} \: \xi^{\top} & \rho\: (\bm{u}\cdot \xi) \: I_{d} & p_{\theta}\: \xi^{\top}  & \frac{1}{3}\xi^{\top} \\ 0 & \theta\: p_{\theta}\: \xi & \rho\: e_{\theta}\: (\bm{u}\cdot \xi) & 0 \\ 0 & \frac{4}{3}\eta\: \xi & 0 & \bm{u}\cdot \xi
\end{pmatrix}
\end{equation}
 and 
\begin{equation}\label{sym-B}
\sum_{j,k=1}^{d}B^{jk}(V)\xi_{j}\xi_{k}= \begin{pmatrix}
0 & 0_{1\times d} & 0 & 0 \\ 0_{d\times 1} & 0 \: I_{d} & 0_{d\times 1} & 0_{d\times 1} \\ 0 & 0_{1\times d} & 0 & 0 \\ 0 & 0_{1\times d} & 0 & \frac{\vert \xi \vert^{2}}{3\sigma_{s}}
\end{pmatrix}
\end{equation}
for $\xi=(\xi_{1}, \ldots,\xi_{d})\in \R^{3}$.

In order to state the local existence of solutions, we multiply the first and second equations in \eqref{neq-rad-hyd-quas} by $\theta\, p_{\rho}/\rho$ and $\theta$, respectively, to obtain
\begin{equation}\label{quas-hyp-par-form}
\begin{aligned}
 A_{1}^{0}(V)\begin{pmatrix}
\rho \\ \bm{u} \\ \theta
\end{pmatrix}_{t} + \sum_{j=1}^{d}A_{11}^{j}(V)\begin{pmatrix}
\rho \\ \bm{u} \\ \theta
\end{pmatrix}_{x_{j}} &= f_{1}(V, \nabla \eta), 
\\ \eta_{t} - \tfrac{1}{3\sigma_{s}} \Delta \eta &= f_{2}(V,\nabla V), 
\end{aligned}
\end{equation}
where the matrix coefficients are given by
\[
A_{1}^{0}(V) = \begin{pmatrix}
\theta\, p_{\rho}/\rho & 0_{1\times d} & 0 \\ 0_{d\times 1} & \rho\,\theta \, I_{d} & 0_{d\times 1} \\ 0 & 0_{1\times d} & \rho\,e_{\theta} 
\end{pmatrix},
\]
\[
A_{11}^{j}(V) = \begin{pmatrix}
\theta\,p_{\rho} (\bm{u}\cdot \hat{e}_{j})/\rho & \theta\, p_{\rho}\hat{e}_{j}& 0 \\ \theta\,p_{\rho}\hat{e}_{j}^{\top} & \rho\,\theta\,(\bm{u}\cdot \hat{e}_{j}) I_{d} & \theta\,p_{\theta}\hat{e}_{j}^{\top} \\  0 & \theta\,p_{\theta}\hat{e}_{j} & \rho\,e_{\theta} (\bm{u}\cdot \hat{e}_{j})
\end{pmatrix}, 
\]
and the non homogeneous terms take the form
\[
f_{1}(V,\nabla \eta)= \begin{pmatrix}
0 \\ -\theta (\nabla \eta )^{\top}/3 \\ \sigma_{a}(\eta-\theta^{4})
\end{pmatrix},
\]
\[
f_{2}(V, \nabla V)=\sigma_{a}(\theta^{4}-\eta) -\nabla(\eta \bm{u}) - \tfrac{1}{3}\eta \nabla \cdot \bm{u}. 
\]
In the expression above, $\nabla V$ denotes the tensor containing all the derivatives of order one of $V=(\rho, \bm{u},\theta, \eta)$.

We are interested in the initial value problem for system \eqref{quas-hyp-par-form} with initial data
\begin{equation}\label{in-dat} 
V(x,0)=V_{0}(x)=(\rho_{0},\bm{u}_{0},\theta_{0},\eta_{0})(x).
\end{equation}
System \eqref{quas-hyp-par-form} falls into the general class of quasilinear symmetric hyperbolic-parabolic system of composite type for which Kawashima (see \cite[Section 2.1]{KaTh83}) proved the local well-posedness. Thus we have the following theorem, which is a restatement of Theorem 2.9 in \cite{KaTh83}. 

\begin{theorem}\label{loc-ex-th}
Let $\bV=(\brho, \bbu, \bthe, \bet) \in \R^{d+3}$ be a constant equilibrium state. Consider the Cauchy problem for system \eqref{quas-hyp-par-form} with initial data $V_{0}(x)$ such that $V_{0}-\bV \in H^{s}(\R^{d})$, $s\geq s_{0}+1$, with $s_{0}:=\lfloor \tfrac{d}{2} \rfloor+1$. Then there exists $\epsilon > 0$ such that if
\[
a_{0}:= \Vert V_{0}-\bV \Vert_{s} \leq \epsilon,
\]
we have that $m_{1}\leq \rho_{0}(x) \leq M_{1}$, $m_{2}\leq \theta_{0}(x) \leq M_{2}$, $m_{3}\leq \eta_{0}(x) \leq M_{3}$ for all $x\in \R^d$ and for some positive constants $0 < m_{i} < M_{i}$, $i=1,2,3$, and there exists $T_{0}=T_{0}(a_{0}) > 0$ such that the Cauchy problem has a unique solution $V = (\rho, \bm{u},\theta, \eta)$ satisfying
\begin{equation*}
\begin{aligned}
&\rho-\brho, \bm{u}-\bbu, \theta-\bthe \in C\left( [0,T_{0}]; H^{s}(\R^{d}) \right)    \cap C^{1}\left( [0,T_{0}]; H^{s-1}(\R^{d}) \right),\\
& \eta-\bet \in C\left( [0,T_{0}]; H^{s}(\R^{d}) \right) \cap C^{1}\left( [0,T_{0}]; H^{s-2}(\R^{d}) \right),
\end{aligned}
\end{equation*}
and the estimate
\begin{equation*}
\sup_{0 \leq \tau \leq t} \Vert (V-\bV)(\tau) \Vert_{s}^{2} + \int_{0}^{t}  \Vert \nabla (\rho, \bm{u}, \theta)(\tau)\Vert_{s-1}^{2}+ \Vert \nabla \eta(\tau) \Vert_{s}^{2}\, d \tau \leq C_{0} \Vert V_{0}-\bV \Vert_{s}^{2},
\end{equation*}
holds for all $t\in [0,T_{0}]$, and some positive constant $C_{0}$ depending on $\Vert V_{0}- \bV \Vert_{s}$.
\end{theorem}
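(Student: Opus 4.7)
The plan is to apply verbatim the local well-posedness theorem of Kawashima for symmetric hyperbolic-parabolic systems of composite type (Theorem 2.9 in \cite{KaTh83}) to the reformulation \eqref{quas-hyp-par-form}. Hence the proof reduces to verifying the structural hypotheses of that abstract theorem, namely: (i) the first block is a symmetrizable hyperbolic system, (ii) the second block is uniformly parabolic, (iii) the inhomogeneous coupling terms are smooth functions of their arguments with the correct derivative order, and (iv) the admissible state space is invariant under small Sobolev-norm perturbations of $\bV$.

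First I would check the hyperbolic block. Under hypotheses \hyperref[H1]{(H$_1$)}--\hyperref[H3]{(H$_3$)}, and restricting to a compact neighborhood of $\bV$ contained in the set $\{\rho>0,\,\theta>0,\,\eta>0\}$, the matrix $A_1^0(V)$ is block-diagonal with strictly positive diagonal blocks $\theta p_\rho/\rho,\,\rho\theta I_d,\,\rho e_\theta$, hence symmetric and positive definite. A direct inspection of $A_{11}^j(V)$ confirms that the symbol $\sum_j A_{11}^j(V)\xi_j$ is symmetric in the $(\rho,\bm{u},\theta)$ variables; this is precisely why we multiplied the continuity and momentum equations by $\theta p_\rho/\rho$ and $\theta$ respectively when deriving \eqref{quas-hyp-par-form}. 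For the parabolic block, the viscosity coefficient $1/(3\sigma_s)>0$ is constant by \hyperref[H3]{(H$_3$)}, so $\eta_t-\frac{1}{3\sigma_s}\Delta\eta=\cdots$ is uniformly strongly parabolic.

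Second I would verify that the inhomogeneous terms $f_1(V,\nabla\eta)$ and $f_2(V,\nabla V)$ are compatible with the composite framework. Here $f_1$ depends smoothly on $V$ and linearly on $\nabla\eta$ (that is, only on first derivatives of the parabolic unknown), while $f_2$ depends smoothly on $V$ and linearly on $\nabla V$. Since all dependences are smooth on the admissible set $\{\rho>0,\,\theta>0,\,\eta>0\}$ (using \hyperref[H2]{(H$_2$)} for the smoothness of $p,e,s$), this matches exactly the hypotheses of \cite{KaTh83}. The regularity requirement $s\geq s_0+1$ with $s_0=\lfloor d/2\rfloor+1$ is dictated by the Sobolev calculus needed to handle the quasilinear terms.

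Third, for the pointwise lower and upper bounds on $(\rho_0,\theta_0,\eta_0)$: the Sobolev embedding $H^s(\R^d)\hookrightarrow L^\infty(\R^d)$ (valid since $s\geq s_0+1>d/2$) yields $\|V_0-\bV\|_{L^\infty}\leq C\|V_0-\bV\|_s\leq Ca_0$. Choosing $\epsilon$ so small that $C\epsilon<\tfrac{1}{2}\min(\brho,\bthe,\bet)$ then forces $V_0$ into a compact subset of the admissible set, with constants $m_i,M_i$ depending only on $\bV$ and $\epsilon$. The existence time $T_0(a_0)>0$, the regularity $\rho-\brho,\bm{u}-\bbu,\theta-\bthe\in C([0,T_0];H^s)\cap C^1([0,T_0];H^{s-1})$ and $\eta-\bet\in C([0,T_0];H^s)\cap C^1([0,T_0];H^{s-2})$ (one order less in time for the parabolic variable, as expected), and the quantitative estimate on $\|(V-\bV)(\tau)\|_s^2$ together with the dissipative control $\int_0^t(\|\nabla(\rho,\bm{u},\theta)\|_{s-1}^2+\|\nabla\eta\|_s^2)\,d\tau$ are then direct outputs of Theorem 2.9 in \cite{KaTh83}, applied to \eqref{quas-hyp-par-form}. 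The only conceptual subtlety—which is not really an obstacle—is to observe that the asymmetric placement of the non-conservative term $\tfrac{1}{3}\eta\nabla\cdot\bm{u}$ in $f_2$ is consistent with Kawashima's assumptions, since that term involves only first-order derivatives of the hyperbolic variables.
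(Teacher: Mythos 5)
Your proposal follows the paper's own approach exactly: recast the system in the quasilinear symmetric hyperbolic-parabolic composite form \eqref{quas-hyp-par-form} and invoke Theorem 2.9 of Kawashima \cite{KaTh83}. The paper itself gives no further proof beyond this citation, so your explicit verification of the symmetrizability of the hyperbolic block, the uniform parabolicity, the structure of $f_1,f_2$, and the Sobolev-embedding argument for the pointwise bounds on $(\rho_0,\theta_0,\eta_0)$ is a faithful and complete filling-in of the same argument.
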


\section{Non-genuine coupling in dimension $d \geq 2$}
\label{non-gen-cou-md}
In this section we prove that system \eqref{neq-rad-hyd} (at the linear level) does not satisfy the genuine coupling condition when the space dimension is $d \geq 2$. For this purpose, let us first write the linearized system around a constant equilibrium state $\bV=(\brho, \bbu, \bthe, \bet)$, with $\bet = \bthe^{4}$. Based on the calculation of its quasilinear form \eqref{neq-rad-hyd-quas}, the linearized system around $\bV$ reads
\begin{equation}
\label{lin-sys-md}
A^{0}V_{t} + \sum_{j=1}^{d}A^{j}V_{x_j}+ LV = \sum_{j,k=1}^{d}B^{jk}V_{x_j x_k},
\end{equation} 
where  $A^0$, $A^{j}$ and $B^{jk}$ are the matrices given in \eqref{A0-Q}-\eqref{sym-B} evaluated at $\bV$, and $L=D_{V}Q(\bV)$. More precisely,
\[
A^{0}=A^0(\bV)=\begin{pmatrix}
1 & 0_{1\times d} & 0 & 0 \\ 0_{d\times 1} & \brho I_{d} & 0_{d\times 1} & 0_{d\times 1} \\ 0 & 0_{1\times d} & \brho\, \be_{\theta} & 0 \\ 0 & 0_{1 \times d} & 0 & 1
\end{pmatrix},
\]
\[
L =D_{V}Q(\bV)=\begin{pmatrix}
0 & 0_{1\times d} & 0 & 0 \\ 0_{d\times 1} & 0_{d} & 0_{d\times 1} & 0_{d\times 1} \\ 0 & 0_{1\times d} & 4\sigma_{a}\bthe^3 & -\sigma_{a} \\ 0 & 0_{1 \times d} & -4\sigma_{a}\bthe^3 & \sigma_{a}
\end{pmatrix},
\]
\[
A^{j}=A^j(\bV)=\begin{pmatrix}
\bbu\cdot \hat{e}_{j} & \brho\,\hat{e}_{j} & 0 & 0\\ \bp_{\rho} \hat{e}_{j}^{\top} & \brho\, (\bbu\cdot \hat{e}_{j}) I_d & \bp_{\theta}\hat{e}_{j}^{\top} & \frac{1}{3}\hat{e}_{j}^{\top}\\ 0 & \bthe\,\bp_{\theta}\hat{e}_{j} & \brho\, \be_{\theta} (\bbu\cdot \hat{e}_{j}) & 0 \\ 0 & \frac{4}{3}\bet\, \hat{e}_{j} & 0 & \bbu\cdot \hat{e}_{j}
\end{pmatrix},
\]
\[
B^{jk}=B^{jk}(\bV)=\begin{pmatrix}
0 & 0_{1\times d} & 0 & 0 \\ 0_{d\times 1} & 0_{d} & 0_{d\times 1} & 0_{d\times 1} \\ 0 & 0_{1\times d} & 0 & 0 \\ 0 & 0_{1 \times d} & 0 & \frac{\delta_{jk}}{3\sigma_{s}}
\end{pmatrix}.
\]
System \eqref{lin-sys-md} is not in symmetric form, but using the relation $\bet=\bthe^{4}$ one can easily see that
\[
S=\begin{pmatrix}
\bthe\, \bp_{\rho}\bet/\brho & 0_{1\times d} & 0 & 0\\ 0_{d \times 1} & \bthe\, \bet \, I_d & 0_{1\times d} & 0_{1 \times d}\\ 0 & 0_{1\times d} & \bet & 0 \\ 0 & 0_{1\times d} & 0 & \bthe/4
\end{pmatrix}
\]
is a symmetrizer. Indeed, we have
\begin{equation}\label{A0-b}
\bA^0:=SA^{0}=\begin{pmatrix}
\bthe\,\bp_{\rho}\bet/\brho & 0_{1\times d} & 0 & 0 \\ 0_{d\times 1} & \brho\,\bthe\,\bet I_d & 0_{d\times 1} & 0_{d\times 1} \\ 0 & 0_{1\times d} & \brho\,\be_{\theta}\bet & 0 \\ 0 & 0_{1\times d} & 0 & \bthe/4
\end{pmatrix},
\end{equation}
\begin{equation}\label{L-b}
\bL:=SL=\begin{pmatrix}
0 & 0_{1\times d} & 0 & 0 \\ 0_{d\times 1} & 0_d & 0_{d\times d} & 0_{d\times d} \\ 0 & 0_{1\times d} & 4\sigma_{a}\bthe^3\bet & -\sigma_{a}\bet \\ 0 & 0_{1\times d} & -\sigma_{a}\bthe^4 & \sigma_{a}\bthe/4
\end{pmatrix} = \begin{pmatrix}
0 & 0_{1\times d} & 0 & 0 \\ 0_{d\times 1} & 0_d & 0_{d\times d} & 0_{d\times d} \\ 0 & 0_{1\times d} & 4\sigma_{a}\bthe^3\bet & -\sigma_{a}\bthe^4 \\ 0 & 0_{1\times d} & -\sigma_{a}\bthe^4 & \sigma_{a}\bthe/4
\end{pmatrix},
\end{equation}
 using the relation $\bet=\bthe^4$, and  
\begin{equation}\label{Aj-b}
\bA^j:=SA^j=\begin{pmatrix}
\bthe\,\bp_{\rho}\bet\, (\bbu\cdot \hat{e}_{j})/\brho & \bthe\,\bp_{\rho}\bet\,\hat{e}_{j} & 0 & 0 \\ \bthe\,\bp_{\rho}\bet \hat{e}_{j}^{\top} & \brho\,\bthe\,\bet (\bbu\cdot \hat{e}_{j}) I_d & \bthe\,\bet\,\bp_{\theta}\hat{e}_{j}^{\top} & \frac{1}{3}\bthe\,\bet\, \hat{e}_{j}^{\top} \\ 0 & \bthe\,\bet\,\bp_{\theta}\hat{e}_{j} & \brho\, \be_{\theta}\bet\,(\bbu\cdot \hat{e}_{j}) & 0 \\ 0 & \frac{1}{3}\bthe\,\bet\, \hat{e}_{j} & 0 & \bthe\, (\bbu\cdot \hat{e}_{j})/4
\end{pmatrix},
\end{equation}
\begin{equation}\label{Bjk-b}
\bB^{jk}:=SB^{jk}= \begin{pmatrix}
0 & 0_{1\times d} & 0 & 0 \\ 0_{d\times 1} & 0_{d} & 0_{d\times 1} & 0_{d\times 1} \\ 0 & 0_{1\times d} & 0 & 0 \\ 0 & 0_{1 \times d} & 0 & \frac{\delta_{jk}\bthe}{12\sigma_{s}}
\end{pmatrix}.
\end{equation}

Hence, if we multiply system \eqref{lin-sys-md} by $S$ on the left we arrive at the following symmetric constant coefficient system,
\begin{equation}
\label{symm-linedsyst}
\bA^{0}V_{t} + \sum_{j=1}^{d}\bA^{j}V_{x_j}+ \bL V = \sum_{j,k=1}^{d}\bB^{jk}V_{x_j x_k}.
\end{equation}

\begin{proposition}
\label{propnongc}
If $d \geq 2$, then system \eqref{symm-linedsyst} is not genuinely coupled.
\end{proposition}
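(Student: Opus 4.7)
The plan is to exhibit a nonzero $\psi \in \ker(\bL) \cap \ker(\bB(\omega))$ and a scalar $\mu \in \R$ for which $\mu \bA^0 \psi + \bA(\omega)\psi = 0$, thereby violating condition (ii) of the Equivalence Theorem \ref{Eq-Th}. This is done by exploiting the fact that in dimension $d \geq 2$ the hyperplane $\omega^\perp \subset \R^d$ contains nonzero vectors.

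First, I would read off from \eqref{Bjk-b} that the symbol is
\[
\bB(\omega) = \sum_{j,k=1}^d \omega_j \omega_k \bB^{jk} = \frac{\bthe}{12\sigma_s}\, \hat e_{d+3}^{\,\top}\hat e_{d+3},
\]
so $\ker \bB(\omega) = \Span\{\hat e_1,\dots,\hat e_{d+2}\}$ for every $\omega \in \bbS^{d-1}$. Next, using $\bet = \bthe^4$, the lower right $2\times 2$ block of $\bL$ in \eqref{L-b} has zero determinant, so $\ker\bL = \Span\{\hat e_1,\dots,\hat e_{d+1},(0,\ldots,0,1,4\bthe^{3})\}$. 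Intersecting the two,
\[
\ker \bL \cap \ker \bB(\omega) = \Span\{\hat e_1, \hat e_2, \dots, \hat e_{d+1}\},
\]
i.e., any $\psi$ in this intersection has the form $\psi = (a_0,\vec a,0,0)^\top$ with $a_0 \in \R$ and $\vec a \in \R^d$.

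For such a $\psi$, a direct block multiplication using \eqref{A0-b} and \eqref{Aj-b} gives
\[
\mu \bA^0 \psi + \bA(\omega)\psi = \begin{pmatrix}
\bthe\,\bp_\rho\,\bet\bigl[(\mu + \bbu\cdot\omega)a_0/\brho + \omega\cdot\vec a\bigr] \\
\bthe\,\bet\bigl[\brho(\mu + \bbu\cdot\omega)\vec a + a_0\,\bp_\rho\,\omega^\top\bigr] \\
\bthe\,\bet\,\bp_\theta\, (\omega\cdot\vec a) \\
\tfrac{1}{3}\bthe\,\bet\,(\omega\cdot\vec a)
\end{pmatrix}.
\]
Setting this to zero: the last two scalar components force $\omega\cdot\vec a = 0$ (since $\bp_\theta, \bthe, \bet >0$); the first then reduces to $(\mu+\bbu\cdot\omega)a_0 = 0$; and the middle $d$-block becomes $\brho(\mu+\bbu\cdot\omega)\vec a + a_0\,\bp_\rho\,\omega^\top = 0$.

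The key step is now to solve these constraints in the case $d\geq 2$: fix any $\omega\in \bbS^{d-1}$, choose a nonzero $\vec a\in\omega^\perp$ (possible because $\dim\omega^\perp = d-1\geq 1$), set $a_0 = 0$ and $\mu = -\bbu\cdot\omega$. Then $\psi = (0,\vec a,0,0)^\top \neq 0$ satisfies every relation above, so $\mu \bA^0 \psi + \bA(\omega)\psi = 0$. This contradicts genuine coupling. The only routine care needed is to check that the degeneracy of the lower $2\times 2$ block of $\bL$ really follows from $\bet=\bthe^4$ and that no other kernel vectors contribute; once that is done, the construction of $\vec a$ is elementary. (As a sanity check, in $d=1$ one has $\omega \in \{\pm 1\}$ and $\omega\cdot\vec a = 0$ forces $\vec a = 0$, so this obstruction disappears, consistent with the genuine coupling proved in Section \ref{sec:local}.)
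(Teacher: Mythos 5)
Your proof is correct and takes essentially the same route as the paper: both arguments locate $\ker\bL \cap \ker\bB(\omega)$, pick a nonzero velocity-block vector orthogonal to $\omega$ (you fix $\omega$ and choose $\vec a \in \omega^\perp$; the paper fixes $\psi=(0,\hat e_d,0,0)$ and chooses $\omega$ with $\omega_d=0$, which is the same thing with the quantifiers reordered), and set $\mu=-\bbu\cdot\omega$ to violate condition (ii) of Theorem~\ref{Eq-Th}. Your version is marginally more systematic in first solving $\mu\bA^0\psi+\bA(\omega)\psi=0$ over the entire intersection of kernels, but the substance and the conclusion are the same.
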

\begin{proof}
Applying the Fourier transform to  \eqref{symm-linedsyst} we end up to
\[
\bA^0\hV_{t}+ \big( i \vert \xi \vert A(\omega) + \bL + \vert \xi \vert^2 B(\omega) \big) \hV=0,
\]
where $\omega = \xi/|\xi| \in \bbS^{d-1}$, $\xi \in \R^d$, $\xi \neq 0$,  and
\[
A(\omega)=\sum_{j=1}^{d}\omega_{j}\bA^{j}= \begin{pmatrix}
\bthe\,\bp_{\rho}\bet\,\bbu\cdot \omega /\brho & \bthe\,\bp_{\rho}\bet\,\omega & 0 & 0 \\ \bthe\,\bp_{\rho}\bet \omega^{\top} & \brho\,\bthe\,\bet \bbu\cdot \omega \mathbf{I}_d & \bthe\,\bet\,\bp_{\theta}\omega^{\top} & \frac{1}{3}\bthe\,\bet\, \omega^{\top} \\ 0 & \bthe\,\bet\,\bp_{\theta}\omega & \brho\, \be_{\theta}\bet\,\bbu\cdot \omega & 0 \\ 0 & \frac{1}{3}\bthe\,\bet\, \omega & 0 & \bthe\,\bbu\cdot \omega/4
\end{pmatrix},
\]
\[
B(\omega)=\sum_{j,k=1}^{d}\omega_{j}\omega_{k}\bB^{jk}= \begin{pmatrix}
0 & 0_{1\times d} & 0 & 0 \\ 0_{d\times 1} & 0_{d} & 0_{d\times 1} & 0_{d\times 1} \\ 0 & 0_{1\times d} & 0 & 0 \\ 0 & 0_{1 \times d} & 0 & \frac{\bthe}{12\sigma_{s}}
\end{pmatrix},
\]
for $\omega\in \bbS^{d-1}$, and where $\bL$ is given by \eqref{L-b}. By the form of $\bL$ and the relation $\bet=\bthe^4$, it is easy to check that
\[
\mbox{ker}\big( \bL \big)=\mbox{span}\left( \left\lbrace (1,0_{1\times d},0,0),(0,\hat{e}_{j},0,0),(0,0_{1\times d},1/(4\bthe^3),1)\, : \, \, j=1,\ldots,d \right\rbrace \right),
\] 
while 
\[
\mbox{ker}\big( B(\omega) \big)=\mbox{span}\left( \left\lbrace (1,0_{1\times d},0,0),(0,\hat{e}_{j},0,0),(0,0_{1\times d},1,0)\, : \, j=1,\ldots,d \right\rbrace \right),
\]
which is independent of $\omega$. Thus we obtain
\[
\mbox{ker}\big( \bL \big)\cap \mbox{ker}\big( B(\omega) \big)=\mbox{span}\left( \left\lbrace (1,0_{1\times d},0,0),(0,\hat{e}_{j},0,0) \, : \, j=1,\ldots,d \right\rbrace \right).
\]

Let us take $\psi\in \mbox{ker}\big( \bL \big)\cap \mbox{ker}\big( B(\omega) \big) $, $\psi=(0,\hat{e}_{d},0,0)\neq 0$. Then for all directions $\omega=(\omega_{1},\omega_{2},\ldots,\omega_{d})\in\bbS^{d-1}$ such that $\omega_{d}=0$, which always exist as we are assuming $d\geq 2$, we obtain
\[
A(\omega)\psi= \begin{pmatrix}
0 \\ \brho\,\bthe\,\bet\, (\bbu\cdot\omega) \, \hat{e}_{d}^{\top}\\ 0 \\ 0
\end{pmatrix},
\quad
\mu \bA^0\psi = \begin{pmatrix}
0 \\ \mu\,\brho\,\bthe\,\bet\, \hat{e}_{d}^{\top} \\ 0 \\ 0
\end{pmatrix},
\]
so that
\[
\mu \bA^{0}\psi+ A(\omega)\psi= \begin{pmatrix}
0 \\ \brho\,\bthe\,\bet (\bbu\cdot \omega + \mu)\hat{e}_{d}^{\top} \\ 0 \\ 0
\end{pmatrix}= 0,
\]
as long as we take $\mu = -\bbu\cdot \omega$. The computations above show that the linear system \eqref{symm-linedsyst} does not satisfy the genuine coupling condition when $d\geq 2$. 
\end{proof}
\begin{remark}
Let us observe that the argument of the proof above does not apply to one space dimension ($d = 1$) because in that case $\omega=\pm 1$ and hence we cannot choose $\omega$ such that $\omega_{d}=0$.
\end{remark}

\section{Spectral analysis of the semigroup for small wave numbers}
\label{sptr-anal}

Next, we prove a result that was used along the proof of Lemma \ref{lin-dec-shr-lem}. The analysis is basically the same as that presented in \cite[Appendix A]{KY09}. However, a small modification must be made to take into account the dissipation due to viscosity mechanisms. Let us start by defining $\Psi(z):=\Gamma \Phi(z) \Gamma^{-1}$, where $\Gamma:=\big( \tiA^{0}\big)^{1/2}$ and $\Phi(z)$ is defined after \eqref{def-sem}:
\[
\Phi(z)= - \big( \tiA^{0} \big)^{-1}\left( \tiL + z \tiA^{1}-z^{2} \tiB \right).
\]
Then we consider $\Psi(0)=\Gamma \Phi(0) \Gamma^{-1}=-\Gamma^{-1}\tiL \Gamma^{-1}$, which is real symmetric and non-positive definite. Thus $\Psi(0)$ has an spectral representation of the form
\begin{equation}\label{A.1}
\Psi(0)= \sum_{j=1}^{r}\lambda_{j}\bar{\Pi}_{j},
\end{equation}
where the $\lambda_{j}'$s are the distinct $r$ eigenvalues of $\Psi(0)$ and $\bar{\Pi}_{j}'$s are the corresponding eigenprojections. The $\bar{\Pi}_{j}'s$ are real symmetric satisfying
\[
\sum_{j=1}^{r}\bar{\Pi}_{j} = I, \quad \bar{\Pi}_{j}\bar{\Pi}_{k}=\delta_{jk}\bar{\Pi}_{j},
\]
where $\delta_{jk}$ is the Kronecker delta. 
In \eqref{A.1} we sort out the eigenvalues such that $\lambda_{1}=0$ and $\lambda_{j}<0$ for $j=2,\ldots, r$. Then $\bar{\Pi}_{1}\R^{4}$  is the null space of $\Psi(0)$, which implies
\begin{equation}\label{A.1-1}
\bar{\Pi}_{1}\R^{4}= \Gamma \mathcal{M},
\end{equation}
where $\mathcal{M}$ is the null space of $\tiL$. 

As $\Psi(z)$ is a polynomial family of matrices depending on the complex parameter $z\in \C$, there is only a finite number of coalescing points in the complex plane; see Texier \cite[Proposition 1.3]{Tex18}. This implies that the coalescing points are isolated. 
Thus for $z$ close to $0$, but different of $0$, the eigenvalues of $\Psi(z)$ are of constant multiplicity, and we can write the spectral decomposition 
\begin{equation}\label{A.2}
\Psi(z) = \sum_{\ell =1}^{\bar{r}}\lambda_{\ell}(z)\Pi_{\ell}(z),
\end{equation}
where $\bar{r}$ is constant. In the above representation the eigennilpotent part of each eigenvalue is zero, as a consequence of $\Psi(z)$ being real symmetric (and hence diagonizable) for $z$ real and because of analytic continuation to $z$ complex. Moreover, one can easily show that $\lambda_{\ell}(z)$ and $\Pi_{\ell}(z)$ are analytic at $z=0$ (see, e.g., Liu and Zeng \cite[Lemma 6.8]{LiuZ97}), and the $\Pi_{\ell}(z)$ are real symmetric for $z$ real and they satisfy
\begin{equation}\label{A.3}
\sum_{\ell=1}^{\bar{r}}\Pi_{\ell}(z)=I, \quad \Pi_{\ell}(z)\Pi_{k}(z)=\delta_{\ell k}\Pi_{\ell}(z),
\end{equation}
for $\ell, k=1,2, \ldots, \bar{r}$.  Evaluating \eqref{A.2} at $z=0$ and using \eqref{A.1} we get
\[
\sum_{\ell=1}^{\bar{r}}\lambda_{\ell}(0)\Pi_{\ell}(0)= \sum_{j=1}^{r}\lambda_{j}\bar{\Pi}_{j}.
\]
Thus by the uniqueness of the spectral decomposition as the sum of a diagonalizable operator and a nilpotent one, we obtain that for each $j=1,2,\ldots,r$, there exists $n_{j}$ of the $\bar{r}$ eigenvalues $\lambda_{\ell}(z)$ such that $\lambda_{j}=\lambda_{\ell}(0)$. We rename these $n_{j}$ eigenvalues by $\lambda_{j\alpha}(z)$, with $\alpha=1,2,\ldots, n_{j}$, so that the spectral decomposition \eqref{A.2} can be rewritten as
\begin{equation}\label{A.4}
\Psi(z)= \sum_{j=1}^{r}\sum_{\alpha=1}^{n_{j}}\lambda_{j\alpha}(z)\Pi_{j\alpha}(z),
\end{equation}
while \eqref{A.3} becomes
\begin{equation}\label{A.4-1}
\sum_{j=1}^{r}\sum_{\alpha=1}^{n_j}\Pi_{j\alpha}(z)=I, \quad \Pi_{j \alpha}(z)\Pi_{j^{\prime}\alpha^{\prime}}(z)=\delta_{j j^{\prime}}\delta_{\alpha \alpha^{\prime}}\Pi_{j \alpha}(z).
\end{equation}
In addition, there holds
\begin{equation}\label{A.6}
\bar{\Pi}_{j}=\sum_{\alpha=1}^{n_{j}}\Pi_{j\alpha}(0),\quad \lambda_{j}=\lambda_{j\alpha}(0).
\end{equation}

In what follows we are going to derive some bounds on the $\lambda_{j\alpha}(z)'$s appearing in \eqref{A.4} for $z$ close to zero, specifically for $z=i\xi$ for $\xi\in \R$ small. In doing so we consider the cases $j=1$ and $j=2,\ldots, r$ separately. We start by the latter case, that is for $j=2,\ldots, r$. We know that
\[
\lambda_{j\alpha}(0)=\lambda_{j}<0,\,\, \mbox{for}\,\, \alpha=n_{1},\ldots, n_{j}.
\]
Thus by continuity of the eigenvalues (see \cite[Proposition 1.1]{Tex18}), there holds
\begin{equation}\label{re-lm-j>1}
\Re \lambda_{j\alpha}(i \xi) \leq -c_{1},
\end{equation}
for some uniform constant $c_{1}>0$ for $\vert \xi \vert \leq R_{1}$, for $j=2,\ldots,r$ and $\alpha=1,2,\ldots, n_{j}$, and some  $0< R_{1} \ll 1$.

For $j=1$, by the Equivalence Theorem \ref{Eq-Th} (observe that the eigenvalues of $\Psi(i\xi)$ are the same as those of the eigenvalue problem associated to the linear system \eqref{sys-z-lin}, which is genuine coupled), we have
\[
\Re \lambda_{1\alpha}(i\xi) \leq -c_{2}\frac{\xi^2}{1+\xi^2},
\]
for some uniform constant $c_{2}>0$, and for all $\xi\in \R$ and all $\alpha=1,2,\ldots, n_{1}$ (see statement (iv) of Theorem \ref{Eq-Th}). Thus for $\vert \xi \vert \leq R_{1}$ we can write 
\begin{equation}\label{re-lm-1}
\Re \lambda_{1\alpha}(i\xi) \leq -c_{3} \xi^2,
\end{equation}
for all $\alpha=1,\ldots,n_{1}$, and some other positive uniform constant $c_{3}$.

As the $\lambda_{j\alpha}(z)$ and $\Pi_{j\alpha}(z)$ are analytic at $z=0$, they can be written in the form
\begin{equation}\label{A.7}
\lambda_{j\alpha}(z)=\sum_{k=0}^{\infty}\lambda_{j\alpha}^{(k)}z^{k},\quad \Pi_{j\alpha}(z)=\sum_{k=0}^{\infty}\Pi_{j\alpha}^{(k)}z^{k}.
\end{equation}
The representation above together with \eqref{A.1-1}, \eqref{A.4-1} and \eqref{A.6} imply
\begin{equation}\label{A.8}
\Pi_{1\alpha}^{(0)}\R^4=\Pi_{1\alpha}(0)\R^{4}\subset \bar{\Pi}_{1}\R^4= \Gamma \mathcal{M},
\end{equation}
for $\alpha=1,2,\ldots, n_{1}$. 

We are ready to prove the following lemma.

\begin{lemma}\label{sptr-anal-lem}
Assume that $\hg(\xi)\in \mathcal{M}^{\bot}$, then there exists positive constants $c$, $C$ and $R$ such that
\[
\vert e^{t\Phi(i\xi)}\big( \tiA^{0} \big)^{-1}\hg(\xi) \vert \leq C e^{-ct}\vert \hg(\xi) \vert + C \vert \xi \vert e^{-c\xi^{2}t} \vert \hg(\xi) \vert,
\]
for $\xi \leq R$, and for all $t\geq 0$.
\end{lemma}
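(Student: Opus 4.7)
The plan is to transfer the analysis from $\Phi(i\xi)$ to the symmetrized generator $\Psi(i\xi) = \Gamma\Phi(i\xi)\Gamma^{-1}$ with $\Gamma=(\tiA^0)^{1/2}$ (symmetric positive definite), and then to exploit the block structure established in the preceding paragraphs. First I would note that, since $\Gamma(\tiA^0)^{-1} = \Gamma^{-1}$, we have the conjugation identity
\[
e^{t\Phi(i\xi)}(\tiA^0)^{-1}\hg(\xi) = \Gamma^{-1}\,e^{t\Psi(i\xi)}\Gamma^{-1}\hg(\xi).
\]
Using the spectral decomposition \eqref{A.4}, which is valid on a complex disk $|z|\le R$ with analytic eigenvalues and eigenprojections, this yields
\[
e^{t\Psi(i\xi)}\Gamma^{-1}\hg(\xi) = \sum_{j=1}^{r}\sum_{\alpha=1}^{n_j} e^{t\lambda_{j\alpha}(i\xi)}\,\Pi_{j\alpha}(i\xi)\,\Gamma^{-1}\hg(\xi),
\]
and since $\Gamma^{-1}$ is bounded and $\Pi_{j\alpha}(i\xi)$ is continuous at $\xi=0$, each factor $\Pi_{j\alpha}(i\xi)\Gamma^{-1}$ is uniformly bounded on $|\xi|\le R_1$.

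For the ``high modes'' $j\ge 2$ I would invoke \eqref{re-lm-j>1} to get $|e^{t\lambda_{j\alpha}(i\xi)}|\le e^{-c_1 t}$, producing the first term $Ce^{-ct}|\hg(\xi)|$ in the desired bound with no further effort. The decisive part is the $j=1$ branch, where only the weaker estimate \eqref{re-lm-1}, namely $|e^{t\lambda_{1\alpha}(i\xi)}|\le e^{-c_3\xi^2 t}$, is available. To obtain the extra $|\xi|$ prefactor needed by the lemma, I would Taylor expand
\[
\Pi_{1\alpha}(i\xi)\Gamma^{-1}\hg(\xi) = \Pi_{1\alpha}^{(0)}\Gamma^{-1}\hg(\xi) + O(|\xi|)|\hg(\xi)|,
\]
with the $O(|\xi|)$ remainder uniform on $|\xi|\le R_1$ by analyticity \eqref{A.7}, and then prove that the zeroth-order term vanishes precisely because $\hg(\xi)\in\mathcal{M}^{\bot}$.

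The key algebraic step, which I expect to be the main (though modest) obstacle, is the orthogonality identity $\Pi_{1\alpha}^{(0)}\Gamma^{-1}\hg=0$. To establish it I would argue as follows. By \eqref{A.8}, $\mathrm{Ran}\,\Pi_{1\alpha}(0)\subset \Gamma\mathcal{M}$. Moreover $\Psi(0)=-\Gamma^{-1}\tiL\Gamma^{-1}$ is real symmetric and $\Pi_{1\alpha}(0)$ is a spectral projection at a real $z=0$, hence real symmetric and idempotent, i.e.\ an orthogonal projection. Thus $\Pi_{1\alpha}(0) v =0$ for any $v$ perpendicular to $\Gamma\mathcal{M}$. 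Now, using that $\Gamma$ is self-adjoint, for every $\psi\in\mathcal{M}$,
\[
\langle \Gamma^{-1}\hg,\,\Gamma\psi\rangle = \langle \hg,\,\Gamma^{-1}\Gamma\psi\rangle = \langle \hg,\psi\rangle = 0,
\]
since $\hg\in\mathcal{M}^{\bot}$; therefore $\Gamma^{-1}\hg\perp \Gamma\mathcal{M}$ and $\Pi_{1\alpha}^{(0)}\Gamma^{-1}\hg=0$, as claimed.

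Combining the two branches yields
\[
|e^{t\Psi(i\xi)}\Gamma^{-1}\hg(\xi)| \le Ce^{-ct}|\hg(\xi)| + C|\xi|\,e^{-c\xi^2 t}|\hg(\xi)|,
\]
on $|\xi|\le R := R_1$, and applying $\Gamma^{-1}$ (bounded) on the left recovers the stated bound for $|e^{t\Phi(i\xi)}(\tiA^0)^{-1}\hg(\xi)|$. The rest is bookkeeping: choosing $c = \min(c_1,c_3)$ and absorbing the projection norms and $\|\Gamma^{-1}\|^2$ into the single constant $C$.
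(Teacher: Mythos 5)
Your argument is correct and mirrors the paper's proof almost step for step: conjugate to $\Psi(i\xi)$ via $\Gamma=(\tiA^0)^{1/2}$, split the spectral decomposition \eqref{A.4} into the $j=1$ and $j\ge 2$ branches, bound the latter by $e^{-c_1 t}$ via \eqref{re-lm-j>1}, and gain the extra $|\xi|$ in the $j=1$ branch by showing $\Pi_{1\alpha}(0)\Gamma^{-1}\hg=0$ from \eqref{A.8} and $\hg\in\mathcal{M}^\perp$. The only cosmetic difference is that you phrase the vanishing of the zeroth Taylor coefficient as ``$\Gamma^{-1}\hg\perp\Gamma\mathcal{M}$ plus $\Pi_{1\alpha}(0)$ is an orthogonal projection onto a subspace of $\Gamma\mathcal{M}$'', whereas the paper pairs $\Pi_{1\alpha}(0)\Gamma^{-1}\hg$ directly against an arbitrary test vector and moves the symmetric operators across the inner product; these are the same computation.
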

\begin{proof}
Using the spectral decomposition \eqref{A.4}, \eqref{A.1-1} and that $\Phi(z)=\Gamma^{-1} \Psi(z) \Gamma$, $\Gamma=\big(\tiA^{0} \big)^{1/2} $, we get
\begin{equation}\label{A.9}
\begin{aligned}
e^{t\Phi(i\xi)}\big( \tiA^{0} \big)^{-1}&\hg(\xi) = \Gamma^{-1}  e^{t\Psi(i\xi)}\Gamma^{-1}\hg(\xi) \\ &= \sum_{j=1}^{r}\sum_{\alpha=1}^{n_{j}} e^{\lambda_{j\alpha}(i\xi)t}\Gamma^{-1}\Pi_{j\alpha}(i\xi) \Gamma^{-1}\hg(\xi).
\end{aligned}
\end{equation}
We consider the cases $j=1$ and $j=2, \ldots, r$. For the latter one, thanks to the bound \eqref{re-lm-j>1} and the analiticity of the $\Pi_{j\alpha}(z)$ at $z=0$ we get
\begin{equation}\label{w-1}
\Big \vert \sum_{j=2}^{r}\sum_{\alpha=1}^{n_{j}} e^{\lambda_{j\alpha}(i\xi)t}\Gamma^{-1}\Pi_{j\alpha}(i\xi) \Gamma^{-1}\hg(\xi) \Big \vert \leq C_{1}e^{-c_{1}t}\vert \hg(\xi) \vert,
\end{equation}
for some uniform constants $C_{1}$, $c_{1}$ for $\vert \xi \vert \leq R_{1}$, with $c_{1}$ and $R_{1}$ such that \eqref{re-lm-j>1} holds.  

For $j=1$, we use the hypothesis $\hg(\xi)\in \mathcal{M}^{\bot}$ and \eqref{A.8} to obtain
\[
\< \Pi_{1\alpha}(0)\Gamma^{-1}\hg(\xi), \phi \> = \< \hg(\xi), \Gamma^{-1}\Pi_{1\alpha}(0)\phi \>=0, \quad \alpha=1,\ldots,n_{1},
\]
for all $\phi \in \R^4$, where we have used the fact that $\Gamma^{-1}$ and $\Pi_{1\alpha}(0)$ are real symmetric. Thus $\Pi_{1\alpha}^{(0)}\Gamma^{-1} \hg(\xi) = \Pi_{1\alpha}(0)\Gamma^{-1} \hg(\xi)=0$, so that
\[
\Pi_{1\alpha}(i\xi) \Gamma^{-1}\hg(\xi) = \sum_{k=1}^{\infty}\Pi_{1\alpha}^{(k)}(i \xi)^{k}\Gamma^{-1} \hg(\xi),
\]
for $\alpha=1,\ldots,n_{1}$. Then combining the expression above and the bound \eqref{re-lm-1} we are led to
\begin{equation}\label{w-2}
\Big \vert \sum_{\alpha=1}^{n_1} \Gamma^{-1}e^{\lambda_{1\alpha}(i\xi)t}\Pi_{1\alpha}(i\xi)\Gamma^{-1} \hg(\xi)  \Big \vert \leq C_{2} \vert \xi \vert e^{-c_{3}\xi^2t} \vert \hg(\xi) \vert,
\end{equation}
for some uniform constants $C_{2}$ and $c_{3}$ for $\vert \xi \vert \leq R_{1}$, with $c_{3}$ and $R_{1}$ such that \eqref{re-lm-1} holds. 

The proof concludes by combining \eqref{w-1} and \eqref{w-2}, and by taking $C=\max \{C_1, C_2 \}>0$, $c= \min \{c_1, c_3 \}>0$ and $R=R_{1}$.

\end{proof}

%


\def\cprime{$'\!\!$}

\end{document}